\newcommand{\uinner}[2]{\ensuremath{\langle #1 ,\; #2 \rangle}}
\crefname{hypothesis}{Hypothesis}{Hypotheses}
\def\xx{{\bf x}}
\def\yy{{\bf y}}
\title{Tighter Bound Estimation for Efficient Biquadratic Optimization Over Unit Spheres\thanks{Submitted to the editors DATE.
\funding{This work is supported by National Natural Science Foundations of China (Grant No. 61571005; Grant No. 12061025; Grant No. 12161020), by Science and Technology Foundation of Guizhou Province ([2020]1Z002), and by the Fundamental Research Program of Guangdong, China (No. 2020B1515310023).}}}
\author{Shigui Li\thanks{School~of~Mathematics,~South China University of Technology, Guangzhou, China
(\email{lishigui@mail.scut.edu.cn}).}
\and Linzhang Lu\thanks{School of Mathematical Sciences, Guizhou Normal University, Guiyang, China; School of Mathematical Sciences, Xiamen University, Xiamen, China (\email{lzlu@xmu.edu.cn}).}
\and Xing Qiu\thanks{Department of Biostatistics and Computational Biology, University of Rochester, Rochester, New York, U.S.A. (\email{xing\_qiu@urmc.rochester.edu}).}
\and Zhen Chen\thanks{School of Mathematical Sciences, Guizhou Normal University, Guiyang, China (\email{zchen@gznu.edu.cn}).}
\and Delu Zeng\footnotemark[1] \thanks{Corresponding author: School of Electronic and Information Engineering, South China University of Technology, Guangzhou, China (\email{dlzeng@scut.edu.cn}).}
}
\begin{document}

\maketitle

\begin{abstract}
Bi-quadratic programming over unit spheres is a fundamental problem in quantum mechanics introduced by pioneer work of Einstein, Schr\"odinger, and others. It has been shown to be NP-hard; so it must be solve by efficient heuristic algorithms such as the block improvement method (BIM). This paper focuses on the maximization of bi-quadratic forms, which leads to a rank-one approximation problem that is equivalent to computing the M-spectral radius and its corresponding eigenvectors. Specifically, we provide a tight upper bound of the M-spectral radius for nonnegative fourth-order partially symmetric (PS) tensors, which can be considered as an approximation of the M-spectral radius. Furthermore, we showed that the proposed upper bound can be obtained more efficiently, if the nonnegative fourth-order PS-tensors is a member of certain monoid semigroups. Furthermore, as an extension of the proposed upper bound, we derive the exact solutions of the M-spectral radius and its corresponding M-eigenvectors for certain classes of fourth-order PS-tensors. Lastly, as an application of the proposed bound, we obtain a practically testable sufficient condition for nonsingular elasticity M-tensors with strong ellipticity condition.

We conduct several numerical experiments to demonstrate the utility of the proposed results. The results show that: (a) our proposed method can attain a tight upper bound of the M-spectral radius with little computational burden, and (b) such tight and efficient upper bounds greatly enhance the convergence speed of the BIM-algorithm, allowing it to be applicable for large-scale problems in applications.

\end{abstract}

\begin{keywords}
M-spectral radius estimation, Exact solution, Bi-quadratic polynomial, Rank-one approximation, Strong ellipticity condition.
\end{keywords}

\begin{AMS}
  15A18, 15A69, 65F15
\end{AMS}

\section{Introduction}
The main focus of this study is the following general bi-quadratic polynomial maximization over unit spheres of the form
\begin{align}\label{optimalcondition}
\begin{aligned}
&{\max\limits_{\xx\in \mathbb R^{m},\yy\in \mathbb R^{n}}~
f_{\mathcal A}(\xx,\yy)=\sum_{i,k=1}^{m}\sum_{j,l=1}^{n}a_{ijkl}x_{i}y_{j}x_{k}y_{l},}\\
&{\text{subject~to}\,~~\xx^\top\xx=1,\,\yy^\top \yy = 1,}
\end{aligned}
\end{align}
where $\mathbb R^{n}$ and $\mathbb R^{m}$ denote the Euclidean spaces of dimensions $n$ and $m$;
$x_{i}$ and $y_{j}$ denote the $i$th and $j$th component of $\xx$ and $\yy$, respectively;
and $\mathcal A=(a_{ijkl})\in\mathbb R^{m\times n\times m\times n}$ is a fourth order partially symmetric (\textbf{PS}) tensor such that
$$a_{ijkl}\in\mathbb R, \quad a_{ijkl}=a_{kjil}=a_{ilkj}, \quad \forall i,k\in[m],~\forall j,l\in[n].$$

Here $[p]:=\{1,2,\ldots,p\}$ and $\mathbb R$ is the real field. $\mathcal A$ is further called \textbf{Nonnegative} fourth-order PS-tensor, if all elements in $\mathcal{A}$ are nonnegative, namely, $a_{ijkl}\geq 0$.

The maximization problem~\eqref{optimalcondition} and its minimization counterpart 
have been widely studied in recent years, see \cite{ling2010,hu2010mp,wang2009practical,zhang2011b,Hu2011,yang2012b,yang2014b,wang2015c,qi2019bi-quadratic}
and references therein. They are known to be NP-hard~\cite{ling2010,hu2010mp,wang2009practical}. Problem (\ref{optimalcondition}) arises from the entanglement problem~\cite{doherty2002distinguishing,dahl2007}, which is a fundamental concept in quantum mechanics introduced by the pioneer work of Einstein \cite{einstein}, Schr{\"o}dinger \cite{Schrodinger}, etc. Entanglement is used to describe whether a quantum state is separable or inseparable, or whether a bipartite mixed state $mn\times mn$ can be decomposed as a convex combination of tensor products for pure product states of $n$ and $m$ dimensions \cite{doherty2002distinguishing,dahl2007}. Based on the geometrical description of separability, the problem of finding the closest separable state to any given state can be mathematically transformed into problem (\ref{optimalcondition}) \cite{wang2015c}.  Problem (\ref{optimalcondition}) can also be considered as the matching problem between two images, where the fourth-order PS-tensor $\mathcal A$ can be represented as some attributes among the two images, and the optimal solutions $\xx^{\ast}$ and $\yy^{\ast}$ can be regarded as the matching features of the two images that have the maximum correspondence under the attributes $\mathcal A$ \cite{Leordeanu2005,Cour2007,Egozi2012}. It can also arises from the strong ellipticity condition problem in solid mechanics (for $n = m = 3$) \cite{chirictua2007strong,han2009conditions}. Thus, the fourth-order PS-tensor is also known as the \textbf{Elasticity} tensor \cite{li2019m,wang2018best,miao2020}.

In fact, any fourth-order PS-tensor $\mathcal A$ in problem (\ref{optimalcondition}) can be considered as a linear transformation between the symmetric matrix spaces: ${\mathcal A}: S_{m\times m} \to S_{n\times n}$, such that
\begin{equation*}
  \big(\mathcal A (X)\big)_{jl}=Z_{jl} := \sum_{i,k=1}^{m} a_{ijkl} X_{ik},~~X\in S_{m\times m},~Z\in S_{n\times n},
\end{equation*}
where $S_{p\times p}$ denotes the space consisting of all symmetric matrices with size $p\times p$.
Here, $X$ can be the Laplace matrix or the Toeplitz matrix in graph signals and image processing \cite{dong2016l,yeke2016}.
If $X$ and $Z$ are both
the connectivity matrices in brain image analysis, or unidirectional networks, then the attributes $\mathcal A$ will be a nonnegative fourth-order PS-tensor \cite{Kriegeskorte,FengHypoTest}.
${\mathcal A}$ can also be considered as the representation of a real \textbf{bilinear} function ${\cal A}(X,Y): S_{m\times m} \times S_{n\times n} \to \mathbb R$,
  \begin{equation}
    \label{eq:ET-bilinear-rep1}
    {\cal A}(X,Y) :=\sum_{i,k=1}^{m}\sum_{j,l=1}^{n} a_{ijkl}X_{ik}Y_{jl}.
  \end{equation}

A particularly important special case is when $X=\xx\xx^\top$ and $Y=\yy\yy^\top$ for $\xx\in\mathbb R^{m}$ and $\yy\in\mathbb R^{n}$. According to Equation~\eqref{eq:ET-bilinear-rep1}, we have
  \begin{equation*}
    \label{eq:ET-bilinear-rank1-rep}
    \mathcal{A}(X, Y) = \sum_{i,k=1}^{m}\sum_{j,l=1}^{n} a_{ijkl}X_{ik}Y_{jl} = \sum_{i,k=1}^{m}\sum_{j,l=1}^{n} a_{ijkl}\cdot x_{i}x_{k}y_{j}y_{l} =f_{\mathcal A}(\xx,\yy).
  \end{equation*}
In other words, $f_{\mathcal A}(\xx, \yy)$ represents the special case of bilinear function $\mathcal A(X, Y)$ when the inputs are two rank-1 symmetric matrices. For convenience, we sometimes write $\mathcal{A}(X, Y)$ as $\mathcal A(\xx,\yy,\xx,\yy)$ in this case.

When considering bi-quadratic optimization problem (\ref{optimalcondition}), we usually need to compute the maximum M-eigenvalue and its corresponding eigenvectors of the fourth-order PS-tensor $\mathcal A$ \cite{wang2009practical,wang2015c}. For example, in \cite{han2009conditions,qi2009conditions}, we have definition of
\begin{equation*}
\big(\mathcal A(\cdot, \yy,\xx,\yy)\big)_{i}=\sum_{k=1}^{m}\sum_{j,l=1}^{n}a_{ijkl}y_{j}x_{k}y_{l},~
\big(\mathcal A(\xx,\yy,\xx,\cdot)\big)_{l}
=\sum_{j=1}^{n}\sum_{i,k=1}^{m}a_{ijkl}x_{i}y_{j}x_{k},
\end{equation*}
then
\begin{equation*}f_{\mathcal A}(\xx,\yy)=\sum_{i=1}^{m}x_{i}\big(\mathcal A(\cdot,\xx,\yy,\xx)\big)_{i}=\sum_{l=1}^{n}y_{l}\big(\mathcal A(\xx,\yy,\xx,\cdot)\big)_{l}=\mathcal A(\xx,\yy,\xx,\yy).\end{equation*}
\begin{definition}\cite{han2009conditions,qi2009conditions}\label{defMeigenvalues}
Let $\mathcal A=(a_{ijkl})$ be a fourth-order PS-tensor. A scalar $\lambda\in\mathbb R$ is called an M-eigenvalue of $\mathcal A$ with the corresponding left M-eigenvector $\xx\in \mathbb R^{m}$ and right M-eigenvector $\yy\in\mathbb R^{n}$, if
\begin{align}\label{Meigenvalues}
\begin{aligned}
&{\mathcal A(\cdot, \yy,\xx,\yy)=\lambda \xx, \quad \mathcal A(\xx,\yy,\xx,\cdot) =\lambda \yy,}\\
&{\text{subject~to}~~\xx^\top\xx=1,~\yy^\top\yy=1.}
\end{aligned}
\end{align}
Furthermore, $\mathcal A$ is called M-positive if the smallest M-eigenvalue of $\mathcal A$ is positive, and the M-spectral radius of $\mathcal A$ is defined by $$\rho_M(\mathcal A):=
\max\big\{|\lambda|: \lambda~\text{is~an~M-eigenvalue~of~$\mathcal A$} \big\}.$$
\end{definition}

By the definition of $f_{\mathcal A}(\xx,\yy)$ and Equation (\ref{Meigenvalues}), the greatest M-eigenvalue and its corresponding M-eigenvectors of $\mathcal A$ are the optimal solution of (\ref{optimalcondition}). On the other side, the best rank-one approximation of $\mathcal A$ is defined as follows
\begin{align}\label{M-rankone}
\begin{aligned}
\min \sum_{i,k=1}^{m}\sum_{j,l=1}^{n}\big[a_{ijkl}-\lambda\cdot(x_iy_jx_ky_l)\big]^{2}.
\end{aligned}
\end{align}
It is easy to see that the partial derivatives of the best rank-one approximation problem (\ref{M-rankone}) w.r.t. the vectors $\xx$, $\yy$ are given in the first two equations in (\ref{Meigenvalues}). Hence, solving the greatest M-eigenvalue and its corresponding M-eigenvectors for $\mathcal A$ stated in Equation~\eqref{Meigenvalues}, solving the problems of bi-quadratic optimization in Equation~(\ref{optimalcondition}), and best rank-one approximation in Equation~(\ref{M-rankone}), are all equivalent.

Two notable and related methods for solving the polynomial optimization problem (\ref{optimalcondition}) are the sum of squares approach \cite{waki2006} and semidefinite programming (SDP) relaxation approach \cite{ling2010,hu2010mp,luo2010}, in which the basic idea is to relax (or approximate) the concerned problem into SDP. However, the scale of the SDP and the computational cost grow  exponentially with the dimensionality of $\mathcal A$. This  problem can be mitigated by the block improvement method (\textbf{BIM}) introduced by Wang et al. \cite{wang2009practical}. BIM is an alternating ascent method that performs a power method iteration on two symmetric indices at each alternation. The power method is a popular solution method in tensor computation and multi-linear algebra \cite{NgMichael09}. It is well-known that the power method, originated from the computing of the dominant eigenvalue of a square matrix \cite{gloub1996matrix}, has the notable advantage of less memory and computational cost. By inheriting the significant advantages of the power method, BIM becomes a practical method for solving the bi-quadratic optimization problem (\ref{optimalcondition}) and the best rank-one approximation problem (\ref{M-rankone}) via computing the greatest M-eigenvalue with its corresponding M-eigenvectors \cite{wang2009practical}.


The global convergence of BIM can be guaranteed under convexity assumption, and this restrict can be removed via a shifted technique \cite{wang2009practical,KoldaShifted}.
Recently, Wang et al. studied using the Karush-Kuhn-Tucker (KKT) point of the concerned problem and showed that BIM can achieve linear convergence rate under certain second-order sufficient conditions, in which the shift parameter plays a critical role \cite{wang2015c}. This shift parameter can be any upper bound on M-spectral radius, and the choice may significantly affect the numerical performance of BIM~\cite{wang2015c}. Wang et al. concluded that BIM has a fast convergence rate if the shift parameter is just slightly larger than the M-spectral radius, and the algorithm would converge at a slower rate if it is too large. Therefore, to have a tighter estimate of the upper bound of maximum M-eigenvalue is important, because it can serve as the shift parameter of BIM and an approximation to the M-spectral radius, which governs the maximum stated in (\ref{optimalcondition}).

How to choose a better upper bound is a challenging problem. The shift parameter provided by Wang et al. were not able to provide a very tight upper bound, due to the complex properties of high-order tensors.  
To obtain a tight upper bound of the M-spectral radius is the focus of several recent studies \cite{li2019m,che2020,he2020m}. These results are obtained by means of tensor-based entries, similar to the method of the bound given by Wang et al., but there is generally a large gap between the bounds given by this kind of approach and the eigenvalue, as shown in \cite{lsg2020t}. For example, one of the best such upper bound was obtained by Li et. al in~\cite{li2019m}, which is smaller than that used by Wang et. al in \cite{wang2009practical}. However, the gap between Li's upper bound and the true M-spectral radius can still be quite large, which can be seen from the numerical experiments in \cite{LISG}. Therefore, it remains an important task to find a better bound in practice.

Meanwhile, although Wang et al. tested the stability and efficiency of BIM by numerical comparisons with state-of-the-art methods such as block coordinate ascent method and sequential quadratic programming, their numerical results showed that any of these methods can only produce a stationary point or a KKT point of the problem, which is not necessarily the \emph{global maximizer}, as shown the Table I and II on page 1074 in \cite{wang2015c}. This is problematic for applications that seek an exact solution.  Some recent studies have mitigated this problem by improving the quality of approximation,  e.g., see \cite{ling2017,chen2021o} and related references therein.
Unfortunately, only limited attention has been paid to the exact solution for bi-quadratic optimization problem in (\ref{optimalcondition}) and best rank-one approximation problem in (\ref{M-rankone}) in very restricted cases; even less work are done for high-dimensional cases. This deficiency in literature motivated us to further explore the exact solution in certain special but useful cases.



In this paper, we present a rigorous upper bound of the M-spectral radius for nonnegative fourth-order PS-tensors via the maximum eigenvalue of the Elasticity Structure (ES) matrices, which can be regarded as an approximation to the M-spectral radius and the solution of optimization (\ref{optimalcondition}). This upper bound can be obtained with less computational cost in a set constructed from monoid semigroups of certain nonnegative fourth-order PS-tensors. Importantly, we find that using the proposed upper bound as the shift parameter in BIM enhances its convergence speed. Furthermore, as an extension of the proposed upper bound, we derive exact solutions of the M-spectral radius and its corresponding M-eigenvectors for a class of fourth-order PS-tensors, which even includes certain PS-tensors with negative entries. Finally, as an application of the proposed upper bound, we obtain a practically testable sufficient condition for nonsingular elasticity M-tensors that satisfy the strong ellipticity condition.

\subsection{Notations}
Throughout this paper, $\mathcal B=(b_{ijkl})\in\mathbb R^{m\times n\times m\times n}$ denotes a nonnegative fourth-order PS-tensor,
and $f_{\mathcal B}(\xx,\yy)$ with $\mathcal B$ in (\ref{optimalcondition})
is rewritten as
$$f_{\mathcal B}(\xx,\yy)=\sum_{i,k=1}^{m}\sum_{j,l=1}^{n}b_{ijkl}x_{i}y_{j}x_{k}y_{l}
=\sum_{j,l=1}^{n}y_{j}y_{l}\xx^\top C_{jl}\xx=\sum_{i,k=1}^{m}x_{i}x_{k}\yy^\top D_{ik}\yy,$$
where $C_{jl}$ and $D_{ik}$ are symmetric matrices in $\mathbb R^{m\times m}$ and $\mathbb R^{n\times n}$ with entries
\begin{equation}\label{cjl}
(C_{jl})_{st}=b_{sjtl}=(C_{jl})_{ts}=b_{tjsl},
\end{equation}
\begin{equation}\label{dik}
(D_{ik})_{uv}=b_{iukv}=(D_{ik})_{vu}=b_{ivku},
\end{equation}
respectively. Herein, one have $C_{jl}=C_{lj}$ and $D_{ik}=D_{ki}$. $C_{jl}$ and $D_{ik}$ are called the Bi-quadratic Structural~(\textbf{BS}) matrices of tensor $\mathcal B$ in \cite{LISG}.
We denote
\begin{equation}\label{cd}C_{l}=\sum_{j=1}^{n}C_{jl},~~ D_{i}=\sum_{k=1}^{m}D_{ik},\end{equation}
\begin{equation}\label{CD}
\overline{C}=\frac{1}{n}\sum_{l=1}^{n}C_{l}~~\text{and}~~ \overline{D}=\frac{1}{m}\sum_{i=1}^{m}D_{i}.
\end{equation}
Clearly,
$C_{l}$ and $\overline{C}$ are symmetric matrices in $\mathbb R^{m\times m}$, and $D_{i}$ and $\overline{D}$ are symmetric matrices in $\mathbb R^{n\times n}$. We call $C_{l}$ or $D_{i}$ as Elasticity Structure(\textbf{ES}) matrices of nonnegative fourth-order PS-tensor $\mathcal B$, and $\overline{C}$ or $\overline{D}$ as Mean Elasticity Structure(\textbf{MES}) matrix of $\mathcal B$. For convenience, we use $C^{\mathcal X}_{l}$ to denote the ES matrix $C_{l}$ in (\ref{cd}) corresponding to the nonnegative fourth-order PS-tensor $\mathcal X$.

In the rest of this paper, the maximum (minimum) eigenvalue of a square matrix $A$ is denoted by $\beta_{\max}(A)$ ($\beta_{\min}(A)$), and $A\geq0$ means that matrix $A$ is positive semi-definite. When not specified, $\xx^{\ast}$ and $\yy^{\ast}$ denote the M-eigenvectors corresponding to the M-spectral radius, $\xx^{\star}$ and $\yy^{\star}$ denote the eigenvectors corresponding to  $\beta_{\max}(\overline{C})$ and $\beta_{\max}(\overline{D})$, respectively. $\otimes$ denotes the Kronecker product.
Let $\mathcal O$ denotes the zero tensor where all entries are zero, and $\mathcal I=(\delta_{ijkl})\in \mathbb R^{m\times n\times m\times n}$ denotes the fourth-order identity PS-tensor defined by
$$\delta_{ijkl}=\left\{
\begin{array}{ll}
  1 ,~ &\text{if}~i=k~\text{and}~j=l,\\
  0 ,~ &\text{otherwise}.
\end{array}\right.
$$

\section{BIM-algorithom and M-spectral radius estimation}
\subsection{BIM-algorithom}
The block improvement method (BIM) is a practical method designed to compute the greatest M-eigenvalue of a fourth-order PS-tensor \cite{wang2009practical},
which is an alternating ascent method that performs a power method iteration on two symmetric indices at each alternation.

\noindent\rule{12.125cm}{0.1em}

\noindent{\bf Algorithm I: BIM \cite[Algorithm 4.1]{wang2009practical}}

\noindent\rule{12.125cm}{0.05em}

\noindent{\bf Initial step:} Input a fourth-order PS-tensor $\mathcal A=(a_{ijkl})$ and unfold it to obtain matrix $A=(A_{st})\in\mathbb R^{mn\times mn}$, where $A_{st}=a_{ijkl}$, $s=n(i-1)+j~\text{and}~t=n(k-1)+l$.

~~\noindent{\bf Substep 1:} Take $\tau=\sum_{1\leq s\leq t\leq mn} |A_{st}|$, set $\overline{\mathcal A}=\tau\mathcal I+\mathcal A$ and unfold $ \overline{\mathcal A}$ to matrix $\overline{A}$.

~~\noindent{\bf Substep 2:} Compute the eigenvector $\bf w$ of matrix $\overline{A}$ associated with the maximum
eigenvalue and fold the eigenvector into matrix $W$, where $W_{ij}={\bf w}_k$, $i=\lceil k/n\rceil$ and $j=[(k-1)\textrm{mod}~n]+1$ .

~~\noindent{\bf Substep 3:} Compute the singular vectors ${\bf u}_{1}$ and ${\bf v}_{1}$ corresponding to the largest
singular value of matrix $W$.

~~\noindent{\bf Substep 4:} Take ${\bf x}_{0} ={\bf u}_{1}$, ${\bf y}_{0}={\bf v}_{1}$, and let $k=0$.

\noindent{\bf Iterative step:} Execute the following procedure alternatively until a certain convergence
criterion is satisfied and output $\bf x^{\ast}$, $\bf y^{\ast}$:
\begin{align*}
\overline{\bf x}_{k+l}&={\overline{\mathcal A}(\cdot, {\bf y}_{k}, {\bf x}_{k}, {\bf y}_{k}),~~{\bf x}_{k}=\frac{\overline{\bf x}_{k+1}}{\|\overline{\bf x}_{k+1}\|}},\\
\overline{\bf y}_{k+l}&={\overline{\mathcal A}({\bf x}_{k+1}, {\bf y}_{k}, {\bf x}_{k+1}, \cdot),~~{\bf y}_{k+1}=\frac{\overline{\bf y}_{k+1}}{\|\overline{\bf y}_{k+1}\|}},\\
k&={k+1}.
\end{align*}
\noindent{\bf Final step:} Output the greatest M-eigenvalue of $\mathcal A$: $\lambda^{\ast}=\overline{\mathcal A}(\bf x^{\ast},\bf y^{\ast},\bf x^{\ast},\bf y^{\ast})-\tau$, and the
associated M-eigenvectors: $\bf x^{\ast}$, $\bf y^{\ast}$.

\noindent\rule{12.125cm}{0.05em}

The convergence of BIM is guaranteed by the M-positive definiteness of the input tensor $\mathcal A$. Since the M-positive definiteness of the target tensor $\mathcal A$ is unknown and is generally difficult to be identify in practice, the greatest M-eigenvalue of $\mathcal A$ is obtained indirectly by computing the greatest M-eigenvalue of the tensor $\overline{\mathcal A}=\tau\mathcal I+\mathcal A$ with M-positive definiteness, where the M-positive definiteness of $\overline{\mathcal A}$ is determined by the shift parameter $\tau$, which is an upper bound of M-spectral radius of the target tensor $\mathcal A$.  Herein, obviously, the upper bound of M-spectral radius of the target tensor $\mathcal A$ plays a core role in the convergence of BIM. In \cite{wang2015c}, Wang et al. concluded that BIM has a fast convergence rate if the shift parameter is just slightly larger than the M-spectral radius, and the algorithm would converge at a slower rate if it is too large, which
can be shown in Figure \ref{figure1}.
\begin{figure}[htbp]\label{figure1}
\centering
\includegraphics[width=2.3in]{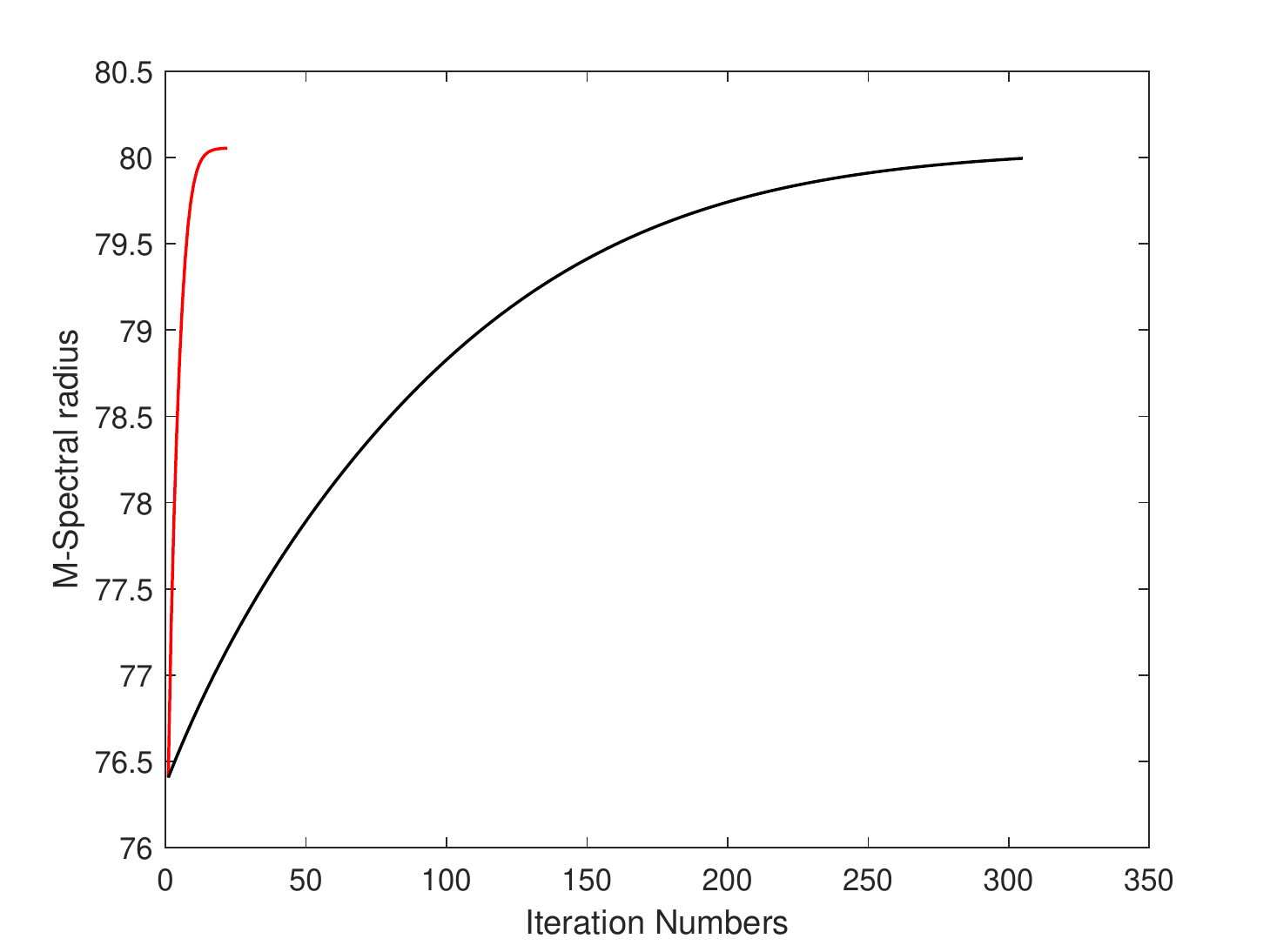}
\includegraphics[width=2.3in]{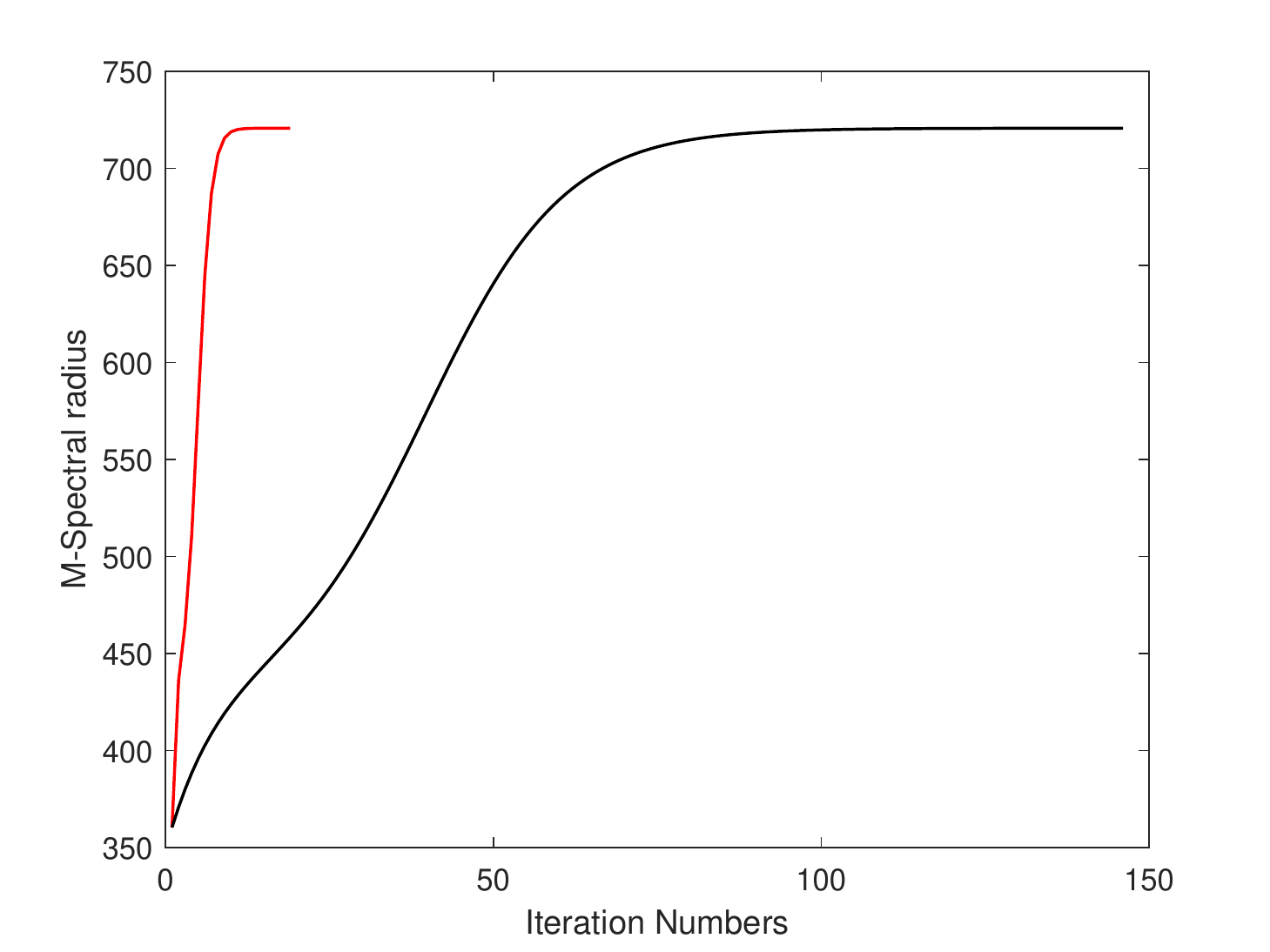}
\caption{The results of applying BIM with different shift parameters to two nonnegative fourth-order PS-tensors with the size of $m=n=10$, where the stopping condition is $1e-3$. The black curve is obtained by the shift parameter of Wang et al. in \cite{wang2009practical}, the red curve is obtained by the shift parameter we derive in Theorem \ref{Theorem-set1}.
}
\end{figure}
However, how to choose a better upper bound is an intractable problem. In the following, we derive the more rigorous bound estimate of the M-spectral radius for the nonnegative fourth-order PS-tensor.


\subsection{Rigorous bound estimate on M-spectral radius for BIM}
In this section, we first derive an upper bound of the M-spectral radius for the nonnegative fourth-order PS-tensor via the ES matrices $\big\{C_{l}\big\}_{l=1}^{n}$ or $\big\{D_{i}\big\}_{i=1}^{m}$, which is tight as shown in Figure \ref{fig:1000} at Section \ref{com_sec}. We further propose a \emph{Position Theorem} \ref{largest-distribution} such that this upper bound is more readily to computing.
Next, we derive an lower bound of the M-spectral radius for the nonnegative fourth-order PS-tensor via the MES matrix $\overline{C}$ or  $\overline{D}$. Finally,
numerical examples are given to demonstrate the efficient estimation of M-spectral radius.

First, we recall the Perron-Frobenius theorem, which is an important basis for studying nonnegative fourth-order PS-tensors.
\begin{lemma}\emph{\cite[Theorem 6]{ding2020elasticity}}\label{dingth6} Let $\mathcal B=(b_{ijkl})$ be a nonnegative fourth-order PS-tensor. Then $\rho_M(\mathcal B)$ is the greatest M-eigenvalue of $\mathcal B$, and there are a nonnegative left M-eigenvector $\xx$ and a nonnegative right M-eigenvector $\yy$ corresponding to $\rho_M(\mathcal B)$.
\end{lemma}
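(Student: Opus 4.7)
The plan is to proceed by a variational argument combined with a sign-flipping trick that exploits the nonnegativity of $\mathcal B$. First I would observe that the function $f_{\mathcal B}(\xx,\yy)$ is continuous on the compact set $\mathbb S^{m-1}\times \mathbb S^{n-1}$, so it attains its maximum at some pair $(\xx^{\ast},\yy^{\ast})$; call this maximum value $\lambda^{\ast}$. Applying the Lagrange multiplier rule to the two unit-sphere constraints and using the partial symmetry of $\mathcal B$ (which forces the gradient in $\xx$ to be $2\,\mathcal B(\cdot,\yy,\xx,\yy)$ and the gradient in $\yy$ to be $2\,\mathcal B(\xx,\cdot,\xx,\yy)$), I would show that $(\xx^{\ast},\yy^{\ast})$ satisfies the M-eigenvalue system~\eqref{Meigenvalues} with eigenvalue precisely $\lambda^{\ast}=f_{\mathcal B}(\xx^{\ast},\yy^{\ast})$.

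Next I would identify $\lambda^{\ast}$ with $\rho_M(\mathcal B)$. For any M-eigenvalue $\lambda$ with unit eigenvectors $(\xx,\yy)$, contracting the first equation of~\eqref{Meigenvalues} with $\xx$ gives $\lambda=f_{\mathcal B}(\xx,\yy)$. Because $\mathcal B\ge 0$, a termwise comparison yields
\begin{equation*}
|\lambda|=\bigl|f_{\mathcal B}(\xx,\yy)\bigr|\le f_{\mathcal B}(|\xx|,|\yy|)\le \max_{\|\xx\|=\|\yy\|=1} f_{\mathcal B}(\xx,\yy)=\lambda^{\ast},
\end{equation*}
since $(|\xx|,|\yy|)$ is still a pair of unit vectors. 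Hence $\lambda^{\ast}\ge |\lambda|$ for every M-eigenvalue, which means $\lambda^{\ast}=\rho_M(\mathcal B)$ and, in particular, $\rho_M(\mathcal B)$ is itself the greatest M-eigenvalue (it is nonnegative because $f_{\mathcal B}$ is nonnegative on the nonnegative orthant, so testing at any nonnegative unit pair already gives $\lambda^{\ast}\ge 0$).

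Finally, for nonnegativity of the eigenvectors I would apply the same sign-flipping argument to the maximizer itself: the inequality $f_{\mathcal B}(\xx^{\ast},\yy^{\ast})\le f_{\mathcal B}(|\xx^{\ast}|,|\yy^{\ast}|)$ combined with maximality forces equality, so $(|\xx^{\ast}|,|\yy^{\ast}|)$ is also a maximizer. Replacing $(\xx^{\ast},\yy^{\ast})$ with $(|\xx^{\ast}|,|\yy^{\ast}|)$ and invoking the KKT step from the first paragraph produces a nonnegative left and right M-eigenvector pair associated with $\rho_M(\mathcal B)$.

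The main obstacle I anticipate is making the sign-flipping step fully rigorous when the eigenvector components vanish on large support patterns: the chain $|f_{\mathcal B}(\xx,\yy)|\le f_{\mathcal B}(|\xx|,|\yy|)$ is immediate, but confirming that the resulting nonnegative pair still satisfies~\eqref{Meigenvalues} (rather than merely the objective inequality) requires re-running the Lagrange condition at $(|\xx^{\ast}|,|\yy^{\ast}|)$, which is valid because it is a maximizer on a smooth compact manifold. An alternative route, if one wants to avoid differentiating at possibly non-smooth points, is to perturb $\mathcal B$ to $\mathcal B+\varepsilon\,(\mathbf 1\otimes\mathbf 1\otimes\mathbf 1\otimes\mathbf 1)$, obtain strictly positive maximizers by the strict comparison $f_{\mathcal B+\varepsilon}(|\xx|,|\yy|)>f_{\mathcal B+\varepsilon}(\xx,\yy)$ whenever $(\xx,\yy)$ has mixed signs, and then let $\varepsilon\downarrow 0$ using compactness of the unit spheres.
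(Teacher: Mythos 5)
The paper does not prove this lemma; it is quoted verbatim as Theorem~6 of Ding, Liu, Qi, and Yan~\cite{ding2020elasticity}, so there is no in-paper proof to compare against. Your proof is correct and self-contained, and it follows the classical Perron--Frobenius template for nonnegative (partially symmetric) tensors: a compactness argument produces a maximizer $(\xx^{\ast},\yy^{\ast})$ with value $\lambda^{\ast}\geq 0$; the Lagrange conditions on the product of unit spheres yield two multipliers, which after contracting the first stationarity equation with $\xx^{\ast}$ and the second with $\yy^{\ast}$ both collapse to $f_{\mathcal B}(\xx^{\ast},\yy^{\ast})=\lambda^{\ast}$, so $(\lambda^{\ast},\xx^{\ast},\yy^{\ast})$ is an M-eigentriple; and the entrywise estimate $\bigl|f_{\mathcal B}(\xx,\yy)\bigr|\leq f_{\mathcal B}(|\xx|,|\yy|)$ available from $\mathcal B\geq 0$, combined with the identity $\lambda=f_{\mathcal B}(\xx,\yy)$ valid for every M-eigentriple, gives $|\lambda|\leq\lambda^{\ast}$ for every M-eigenvalue $\lambda$, so $\lambda^{\ast}=\rho_M(\mathcal B)$ is the greatest M-eigenvalue. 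The same inequality at the maximizer forces $(|\xx^{\ast}|,|\yy^{\ast}|)$ to be a maximizer too, hence a nonnegative eigenpair by rerunning the Lagrange step.

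One small clarification on the obstacle you flagged: there is no non-smoothness issue to worry about. You never differentiate the map $\xx\mapsto|\xx|$; you apply the Lagrange condition to the smooth polynomial $f_{\mathcal B}$ at the point $(|\xx^{\ast}|,|\yy^{\ast}|)$, which lies on the smooth manifold $\mathbb S^{m-1}\times\mathbb S^{n-1}$ and is a global maximizer there. The perturbation-to-$\mathcal B+\varepsilon(\mathbf 1\otimes\mathbf 1\otimes\mathbf 1\otimes\mathbf 1)$ route is therefore unnecessary, though it is a valid alternative.
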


Based on this Lemma, we derive the following theorem via the ES matrices
$C_{l}$ and $D_{i}$.
\begin{theorem}\label{Theorem-set1}
If $\mathcal B=(b_{ijkl})$ is a nonnegative fourth-order PS-tensor, then
\begin{equation*}
\rho_{M}(\mathcal B)\leq\min\big\{R_{1}(\mathcal B),R_{2}(\mathcal B)\big\},
\end{equation*}
where $R_{1}(\mathcal B):=\max\limits_{l\in[n]}\big\{\beta_{\max}(C_{l})\big\}$, $R_{2}(\mathcal B):=\max\limits_{i\in[m]}\big\{\beta_{\max}(D_{i})\big\}$, and $C_l$, $D_i$ are defined in (\ref{cd}).
\end{theorem}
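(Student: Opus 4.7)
The plan is to invoke the Perron-Frobenius-type result (Lemma, from Ding et al.) just stated, which guarantees that $\rho_M(\mathcal B)$ is attained by a pair of \emph{componentwise nonnegative} M-eigenvectors $\xx^{\ast} \geq 0$, $\yy^{\ast} \geq 0$ with unit Euclidean norm. I will prove the two inequalities $\rho_M(\mathcal B) \leq R_1(\mathcal B)$ and $\rho_M(\mathcal B) \leq R_2(\mathcal B)$ by symmetric arguments; I describe the first, and only sketch how the second follows by an analogous dual computation.

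Starting from the second M-eigenvalue equation $\mathcal B(\xx^{\ast}, \yy^{\ast}, \xx^{\ast}, \cdot) = \rho_M(\mathcal B)\,\yy^{\ast}$ and re-expressing each component via the BS matrices $C_{jl}$ defined in \eqref{cjl}, the $l$-th component reads
\[
\rho_M(\mathcal B)\, y^{\ast}_l \;=\; \sum_{j=1}^{n} y^{\ast}_j \,(\xx^{\ast})^\top C_{jl}\,\xx^{\ast}.
\]
I would then select an index $l^{\ast} \in \arg\max_{l\in[n]} y^{\ast}_l$, which satisfies $y^{\ast}_{l^{\ast}} > 0$ since $\|\yy^{\ast}\|=1$ and $\yy^{\ast}\geq 0$. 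Two facts make this the pivotal step: (i) $y^{\ast}_j \leq y^{\ast}_{l^{\ast}}$ for every $j\in[n]$; and (ii) because $\mathcal B$ is nonnegative and $\xx^{\ast} \geq 0$, each quadratic form $(\xx^{\ast})^\top C_{jl^{\ast}}\,\xx^{\ast}$ is itself nonnegative (as $(C_{jl})_{st}=b_{sjtl}\geq 0$). Applying (i) and (ii) term by term to bound the right-hand side, then dividing by the strictly positive $y^{\ast}_{l^{\ast}}$, yields
\[
\rho_M(\mathcal B) \;\leq\; (\xx^{\ast})^\top \Bigl(\sum_{j=1}^{n} C_{jl^{\ast}}\Bigr) \xx^{\ast} \;=\; (\xx^{\ast})^\top C_{l^{\ast}}\,\xx^{\ast},
\]
using the definition of $C_{l}$ in \eqref{cd}. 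A standard Rayleigh quotient inequality, together with the symmetry of $C_{l^{\ast}}$ and $\|\xx^{\ast}\|=1$, then gives $(\xx^{\ast})^\top C_{l^{\ast}} \xx^{\ast} \leq \beta_{\max}(C_{l^{\ast}}) \leq R_1(\mathcal B)$.

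The bound $\rho_M(\mathcal B) \leq R_2(\mathcal B)$ follows by repeating the argument on the first M-eigenvalue equation $\mathcal B(\cdot,\yy^{\ast},\xx^{\ast},\yy^{\ast}) = \rho_M(\mathcal B)\,\xx^{\ast}$, now localizing at an index $i^{\ast}\in\arg\max_{i\in[m]} x^{\ast}_i$ and using the matrices $D_{ik}$, $D_i$ from \eqref{dik} and \eqref{cd}. Taking the minimum of the two bounds delivers the claim.

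I do not expect any deep obstacle. The only non-obvious ingredient is the pigeonhole-style localization at the coordinate attaining the maximum of $\yy^{\ast}$ (respectively $\xx^{\ast}$); everything else reduces to sign bookkeeping plus Rayleigh's principle. The points that most need care in the write-up are (a) verifying that Perron-Frobenius is invoked in the correct form, delivering \emph{entrywise} nonnegative eigenvectors rather than merely real ones, and (b) confirming that the entries of the BS matrices $C_{jl^{\ast}}$ are indeed nonnegative so that step (ii) above is valid — both of which are immediate from the hypothesis that $\mathcal B$ is nonnegative.
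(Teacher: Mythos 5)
Your proposal is correct and follows essentially the same route as the paper: invoke Ding et al.'s Perron--Frobenius lemma to obtain nonnegative M-eigenvectors, localize at the index $l^{\ast}$ (the paper calls it $p$) maximizing the right eigenvector, use nonnegativity of the entries of $\yy^{\ast}$ and of the quadratic forms $(\xx^{\ast})^\top C_{jl^{\ast}}\xx^{\ast}$ to bound the sum by $(\xx^{\ast})^\top C_{l^{\ast}}\xx^{\ast}$, and finish with Rayleigh's principle; the $R_2$ bound is the mirror argument.
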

\begin{proof}
According to Lemma \ref{dingth6}, suppose that $\lambda^{\ast}$ is the greatest M-eigenvalue of $\mathcal B$,
$\xx=(x_{1},\cdots,x_{m})$ and $\yy=(y_{1},\cdots,y_{n})$ are the corresponding left and right nonnegative M-eigenvectors, respectively.
Let $y_{p}=\max\limits_{j\in [n]}y_{j}$, then $y_{p}>0$.
By the $p$-th equation of $\mathcal B(\xx,\yy,\xx,\cdot)=\lambda^* \yy$ in (\ref{Meigenvalues}), we have
\begin{equation*}
\lambda^{\ast} y_{p}=\sum_{j=1}^{n}y_{j}\big(\sum_{i,k=1}^{m} b_{ijkp}x_{i}x_{k}\big)=\sum_{j=1}^{n}y_{j}\xx^\top C_{jp}\xx.
\end{equation*}
Clearly, $y_{j}\geq 0$ and $\xx^\top C_{jl}\xx\geq 0$ for any $j,l\in[n]$.
So
$$\lambda^{\ast}=\sum_{j=1}^{n}\frac{y_{j}}{y_{p}}\xx^\top C_{jp}\xx\leq\sum_{j=1}^{n}\xx^\top C_{jp}\xx=\xx^\top \bigg(\sum_{j=1}^{n}C_{jp}\bigg)\xx=\xx^\top C_{p}\xx.$$
Since the ES matrix $C_{p}$ is symmetric, we get
$$\lambda^{\ast}\leq\beta_{\max}\big(C_{p}\big).$$
 Thus, by Lemma \ref{dingth6}, we obtain $\rho_{M}(\mathcal B)=\lambda^{\ast}\leq\beta_{\max}\big(C_{p}\big)\leq R_{1}(\mathcal B)$.

 Similarly, by using the first equation of (\ref{Meigenvalues}):
 $\mathcal B(\cdot,\yy,\xx,\yy)=\lambda^{\ast} \xx$, we can obtain $\rho_{M}(\mathcal B)=\lambda^{\ast}\leq\beta_{\max}\big(D_{q}\big)$ for some $q\in[m] $ in the same above way. So  $\rho_{M}(\mathcal B)\leq R_{2}(\mathcal B)$, which completes the proof.
\end{proof}

It can be seen from the proof of Theorem \ref{Theorem-set1} that, if we know the position $p$ (or $q$) of the largest element for the nonnegative right (or left) M-eigenvector corresponding to the greatest M-eigenvalue, then we only need to calculate the maximum eigenvalue of the ES matrix $C_p$ (or $D_q$) to obtain the upper bound for M-spectral radius. Thus, we give the following \emph{Position Theorem} to certain nonnegative fourth-order PS-tensors.

\begin{theorem}\label{largest-distribution}
Let $\mathcal B=(b_{ijkl})$ be a nonnegative fourth-order PS-tensor. If there exists $p\in [n]$ such that $\big(C_{jp}-C_{jl}\big)\geq0$ for $\forall j,l\in[n]$,
(Symmetrically, if there exists $q\in [m]$ such that $\big(D_{qk}-D_{ik}\big)\geq0$ for $\forall i,k\in[m]$,) then
$$y_{p}=\max\limits_{j\in[n]}y_{j}~~~ \textbf{(} ~x_{q}=\max\limits_{i\in[m]}x_{i}\textbf{)},$$
where $\xx$, $\yy$ are the nonnegative left and right M-eigenvectors corresponding to the greatest M-eigenvalue $\lambda^{\ast}$ of $\mathcal B$, respectively.
\end{theorem}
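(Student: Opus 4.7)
The plan is to exploit the componentwise form of the M-eigenvalue equation expressed in terms of the BS matrices $C_{jl}$ (and symmetrically $D_{ik}$), and then convert the positive-semidefiniteness hypothesis into a pointwise inequality on the components of the eigenvectors by a simple subtraction.

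First I would rewrite the second equation in \eqref{Meigenvalues} coordinate-by-coordinate. For each $l\in[n]$, the $l$-th component of $\mathcal B(\xx,\yy,\xx,\cdot)=\lambda^{\ast}\yy$ reads
\begin{equation*}
\lambda^{\ast}\,y_{l} \;=\; \sum_{j=1}^{n}\sum_{i,k=1}^{m} b_{ijkl}\,x_{i}y_{j}x_{k} \;=\; \sum_{j=1}^{n} y_{j}\,\xx^{\top} C_{jl}\xx,
\end{equation*}
exactly as in the proof of Theorem \ref{Theorem-set1}. Writing this identity for the distinguished index $p$ and for an arbitrary index $l$, and then subtracting, gives
\begin{equation*}
\lambda^{\ast}(y_{p}-y_{l}) \;=\; \sum_{j=1}^{n} y_{j}\,\xx^{\top}\bigl(C_{jp}-C_{jl}\bigr)\xx.
\end{equation*}

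Now I would invoke the hypothesis: by assumption $C_{jp}-C_{jl}\geq 0$ in the PSD sense for every $j,l\in[n]$, so $\xx^{\top}(C_{jp}-C_{jl})\xx\geq 0$. Combined with the Perron--Frobenius conclusion in Lemma \ref{dingth6} that $\xx,\yy\geq 0$, every term on the right-hand side is nonnegative, so $\lambda^{\ast}(y_{p}-y_{l})\geq 0$. Since $\mathcal B$ is nonnegative and nonzero (the zero-tensor case is vacuous), Lemma \ref{dingth6} also gives $\lambda^{\ast}=\rho_{M}(\mathcal B)>0$, so I can divide through to conclude $y_{p}\geq y_{l}$ for every $l\in[n]$, which is exactly $y_{p}=\max_{j\in[n]} y_{j}$. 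The symmetric claim $x_{q}=\max_{i\in[m]} x_{i}$ follows by applying the same argument to the first equation of \eqref{Meigenvalues}, namely $\mathcal B(\cdot,\yy,\xx,\yy)=\lambda^{\ast}\xx$, whose $i$-th component is $\lambda^{\ast} x_{i}=\sum_{k} x_{k}\,\yy^{\top} D_{ik}\yy$, and using the hypothesis $D_{qk}-D_{ik}\geq 0$.

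The only subtle point, rather than a real obstacle, is securing $\lambda^{\ast}>0$ so that the division step is legitimate; this is handled by discarding the trivial $\mathcal B=\mathcal O$ case (for which every $p$ works) and otherwise appealing directly to Lemma \ref{dingth6}. Everything else is a mechanical manipulation of the M-eigenvalue equations combined with the two standing nonnegativity facts (entrywise nonnegativity of $\xx,\yy$ from Perron--Frobenius, and PSD nonnegativity of the matrix differences from the hypothesis), so no deeper machinery is required.
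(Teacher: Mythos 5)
Your proof is correct and mirrors the paper's argument: both use the componentwise identity $\lambda^{\ast}y_{l}=\sum_{j}y_{j}\,\xx^{\top}C_{jl}\xx$, apply the PSD hypothesis to get $\xx^{\top}C_{jp}\xx\geq\xx^{\top}C_{jl}\xx$, and use the Perron--Frobenius nonnegativity of $\yy$ to conclude $\lambda^{\ast}y_{p}\geq\lambda^{\ast}y_{l}$. The only cosmetic difference is in the $\lambda^{\ast}=0$ edge case, where you observe (correctly) that nonnegativity forces $\mathcal B=\mathcal O$, while the paper notes that any nonnegative unit vector is then an admissible $\yy$; both dispositions are fine.
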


\begin{proof}
If $\lambda^{\ast}\neq0$, by $\mathcal B(\xx,\yy,\xx,\cdot)=\lambda^{\ast} \yy$ in (\ref{Meigenvalues}), we have
\begin{equation*}
\lambda^{\ast} y_{l}=\sum_{j=1}^{n}y_{j}\xx^\top C_{jl}\xx.
\end{equation*}
Then $\xx^\top C_{jp}\xx\geq\xx^\top C_{jl}\xx$ for $\forall j,l\in[n]$, since $\exists~p\in[n]$ such that $\big(C_{jp}-C_{jl}\big)\geq0$ (positive semi-definite)  for $\forall j,l\in[n]$.
As $\yy$ is nonnegative, we have
$$\lambda^{\ast} y_{p}=\sum_{j=1}^{n}y_{j}\xx^\top C_{jp}\xx\geq\sum_{j=1}^{n}y_{j}\xx^\top C_{jl}\xx=\lambda^{\ast} y_{l}~\text{for}~\forall l\in[n],$$
then,
$$y_{p}\geq y_{l}~\text{for}~\forall l\in[n].$$
If $\lambda^{\ast}=0$, obtained from (\ref{Meigenvalues}), $\mathcal A(\xx,\yy,\xx,\yy)=0$ for $\forall~\yy\in\mathbb R^{n}$ and $\forall~\xx\in\mathbb R^{m}$. So any unit vector $\yy\in\mathbb R^{n}$ can be a right M-eigenvector corresponding to $\lambda^{\ast}=0$.
Then there is a nonnegative unit vector $\yy$ and $\exists~p\in[n]$ such that $y_{p}\geq y_{l}~\text{for}~\forall l\in[n]$. Therefore, $y_{p}\geq y_{l}~\text{for}~\forall l\in[n]$ always holds regardless of whether $\lambda^{\ast}$ is equal to $0$ or not. Symmetrically, we have $$x_{q}\geq x_{i}~\text{for}~\forall i\in[m].$$
The proof is completed.
\end{proof}

Herein, we use $\Upsilon (\mathcal B)$ to represent the set consisting of all nonnegative fourth-order PS-tensors satisfying Theorem \ref{largest-distribution}, and we would like to know how large this set is. Obviously, the elasticity identity tensor $\mathcal I$ is a member of this set.
For the set consisting of those nonnegative fourth-order PS-tensors $\mathcal B$ in $\mathbb R^{m\times n\times m\times n}$ with respect to index $p\in[n]$ or $q\in [m]$ satisfying the conditions of Theorem \ref{largest-distribution}, 
we denote
$$\Upsilon ^{p}_{1}(\mathcal B)=\big\{\mathcal B\in \mathbb R^{m\times n\times m\times n}: \big(C_{jp}-C_{jl}\big)\geq0,~ \forall j,l\in[n]\big\},$$
$$\text{or}~\Upsilon ^{q}_{2}(\mathcal B)=\big\{\mathcal B\in \mathbb R^{m\times n\times m\times n}: \big(D_{qk}-D_{ik}\big)\geq0,~ \forall i,k\in[m]\big\}.$$
Clearly,
$$\Upsilon (\mathcal B)=\bigg(\bigcup\limits_{p=1}^{n}\Upsilon ^{p}_{1}(\mathcal B)\bigg)\bigcup \bigg(\bigcup\limits_{q=1}^{m}\Upsilon ^{q}_{2}(\mathcal B)\bigg).$$

Then, the sets $\Upsilon ^{p}_{1}(\mathcal B)$ and $\Upsilon ^{q}_{2}(\mathcal B)$ hold the following property.

\begin{theorem}\label{largest-distribution-group}
Given $p\in[n]$ and $q\in [m]$, the sets $\Upsilon ^{p}_{1}(\mathcal B)$ and $\Upsilon ^{q}_{2}(\mathcal B)$ separately form two monoid semigroups with addition (element-wise).
\end{theorem}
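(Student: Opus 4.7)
The plan is to verify the monoid axioms directly, exploiting the linearity of the map $\mathcal{B}\mapsto C^{\mathcal{B}}_{jl}$ (and symmetrically for $D^{\mathcal{B}}_{ik}$) in the tensor entries, together with the fact that the cone of positive semidefinite matrices is closed under addition. I will prove the statement for $\Upsilon^p_1(\mathcal{B})$; the argument for $\Upsilon^q_2(\mathcal{B})$ is entirely symmetric.

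First I would observe that element-wise addition of tensors in $\mathbb{R}^{m\times n\times m\times n}$ is automatically associative, and that the zero tensor $\mathcal{O}$ serves as the additive identity. It remains only to (i) check that $\mathcal{O}\in\Upsilon^p_1(\mathcal{B})$, and (ii) establish closure of $\Upsilon^p_1(\mathcal{B})$ under addition. For (i), every BS matrix of $\mathcal{O}$ is the zero matrix, hence $C^{\mathcal{O}}_{jp}-C^{\mathcal{O}}_{jl}=0\geq 0$ trivially, and $\mathcal{O}$ is a nonnegative PS-tensor by inspection.

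The main content is step (ii). Let $\mathcal{B}^{(1)},\mathcal{B}^{(2)}\in\Upsilon^p_1(\mathcal{B})$, and set $\mathcal{B}=\mathcal{B}^{(1)}+\mathcal{B}^{(2)}$. Because nonnegativity and the partial-symmetry relations $a_{ijkl}=a_{kjil}=a_{ilkj}$ are preserved under entrywise addition, $\mathcal{B}$ is itself a nonnegative fourth-order PS-tensor. The crucial observation is that the definition of the BS matrix in~\eqref{cjl} is linear in the tensor entries, so
\begin{equation*}
C^{\mathcal{B}}_{jl} \;=\; C^{\mathcal{B}^{(1)}}_{jl} + C^{\mathcal{B}^{(2)}}_{jl}, \qquad \forall\, j,l\in[n].
\end{equation*}
Consequently
\begin{equation*}
C^{\mathcal{B}}_{jp}-C^{\mathcal{B}}_{jl} \;=\; \bigl(C^{\mathcal{B}^{(1)}}_{jp}-C^{\mathcal{B}^{(1)}}_{jl}\bigr)+\bigl(C^{\mathcal{B}^{(2)}}_{jp}-C^{\mathcal{B}^{(2)}}_{jl}\bigr),
\end{equation*}
and each summand on the right is positive semidefinite by hypothesis. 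Since the PSD cone is closed under addition, the left-hand side is PSD for every $j,l\in[n]$, which is precisely the membership condition $\mathcal{B}\in\Upsilon^p_1(\mathcal{B})$.

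There is no serious obstacle here: the only ``non-routine'' point is spotting the linearity of $\mathcal{B}\mapsto C^{\mathcal{B}}_{jl}$ and the closure of the PSD cone under addition, after which closure of $\Upsilon^p_1(\mathcal{B})$ is immediate. Associativity and the identity are inherited from the ambient vector space $\mathbb{R}^{m\times n\times m\times n}$, so the three monoid axioms are all verified. The symmetric argument for $\Upsilon^q_2(\mathcal{B})$ uses~\eqref{dik} in place of~\eqref{cjl} and is otherwise identical, completing the proof.
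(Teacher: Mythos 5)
Your proof is correct and follows essentially the same route as the paper's: you identify $\mathcal{O}$ as the identity element, exploit the linearity of $\mathcal{B}\mapsto C^{\mathcal{B}}_{jl}$ so that the differences $C_{jp}-C_{jl}$ add componentwise, invoke closure of the positive semidefinite cone under addition, and note that associativity is inherited from entrywise addition. Your write-up is if anything slightly more careful than the paper's in explicitly remarking that nonnegativity and partial symmetry are preserved under addition.
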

\begin{proof}
We only need to prove the case of set $\Upsilon ^{p}_{1}(\mathcal B)$.
First, it is easy to find that set $\Upsilon ^{p}_{1}(\mathcal B)$ contains zero tensor $\mathcal O$ as its identity element.
Next we prove the closedness of addition operation.  For
$\forall \mathcal X, \mathcal Y\in\Upsilon ^{p}_{1}(\mathcal B)$, we let $\mathcal Z=\mathcal X+\mathcal Y$. Conveniently, we use $C^{\mathcal X}_{jl}$, $C^{\mathcal Y}_{jl}$ and $C^{\mathcal Z}_{jl}$ to denote the BS matrices $C_{jl}$ corresponding to nonnegative fourth-order PS-tensors $\mathcal X$, $\mathcal Y$ and $\mathcal Z$ in (\ref{cjl}). For $\forall j,l\in[n]$, we have
\begin{equation*}
C^{\mathcal Z}_{pl}-C^{\mathcal Z}_{jl}=\big(C^{\mathcal X}_{pl}+C^{\mathcal Y}_{pl}\big)-\big(C^{\mathcal X}_{jl}+C^{\mathcal Y}_{jl}\big)=
\big(C^{\mathcal X}_{pl}-C^{\mathcal X}_{jl}\big)+\big(C^{\mathcal Y}_{pl}-C^{\mathcal Y}_{jl}\big).
\end{equation*}
Since $\mathcal X,~\mathcal Y\in\Upsilon _1(\mathcal B)$, we have
$\big(C^{\mathcal X}_{pl}-C^{\mathcal X}_{jl}\big)\geq0,~\big(C^{\mathcal Y}_{pl}-C^{\mathcal Y}_{jl}\big)\geq0,$
then, $C^{\mathcal Z}_{pl}-C^{\mathcal Z}_{jl}\geq0,$
namely,
$$\mathcal Z\in\Upsilon ^{p}_{1}(\mathcal B).$$
Finally, for $\forall \mathcal X\in\Upsilon ^{p}_{1}(\mathcal B)$, $\forall \mathcal Y\in\Upsilon ^{p}_{1}(\mathcal B)$ and $\forall \mathcal Z\in\Delta(\mathcal B)$, obviously,
$$(x_{ijkl}+y_{ijkl})+z_{ijkl}=x_{ijkl}+y_{ijkl}+z_{ijkl}=x_{ijkl}+(y_{ijkl}+z_{ijkl}),$$
that is,
$$(\mathcal X+\mathcal Y)+\mathcal Z=\mathcal X+(\mathcal Y+\mathcal Z).$$
So, set $\Upsilon ^{p}_{1}(\mathcal B)$ possess the associative law with addition operation.
The proof is completed.
\end{proof}

Note that neither the set $\Upsilon _{1}^{p}(\mathcal B)$ nor $\Upsilon _{2}^{q}(\mathcal B)$ is a group due to the nonnegative restriction. That being said, Theorem \ref{largest-distribution-group} clearly describes how large the set $\Upsilon (\mathcal B)$ is. This implies that
the \emph{Position Theorem} \ref{largest-distribution}
is applicable to a large proportion of nonnegative fourth-order PS-tensors.


Next, we derive a lower bound of the M-spectral radius for the nonnegative fourth-order PS-tensor via the MES matrix.
\begin{theorem}\label{Theorem-set2}
Let $\mathcal B=(b_{ijkl})$ be a nonnegative fourth-order PS-tensor.
Then
\begin{equation*}
\rho_{M}(\mathcal B)\geq \max\big\{\beta_{\max}(\overline{C}),\beta_{\max}(\overline{D})\big\},
\end{equation*}
where $\overline{C}$ and $\overline{D}$ are defined as the MES matrices in (\ref{CD}).
\end{theorem}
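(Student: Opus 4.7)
The plan is to use the variational characterization of the M-spectral radius and then plug in a smart choice of one vector to reduce the bi-quadratic form to an ordinary quadratic form in the other vector.

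First I would recall that by Lemma \ref{dingth6} the quantity $\rho_M(\mathcal B)$ equals the greatest M-eigenvalue of $\mathcal B$, and by the equivalence between \eqref{optimalcondition} and \eqref{Meigenvalues} established earlier, this in turn equals $\max_{\|\xx\|=\|\yy\|=1} f_{\mathcal B}(\xx,\yy)$. Therefore, for any specific pair of unit vectors $(\xx_0,\yy_0)$, we have the lower bound $\rho_M(\mathcal B)\geq f_{\mathcal B}(\xx_0,\yy_0)$.

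Next I would take the uniform vector $\yy_0=\frac{1}{\sqrt{n}}(1,1,\ldots,1)^\top\in\mathbb R^n$, which has unit norm. Using the representation $f_{\mathcal B}(\xx,\yy)=\sum_{j,l=1}^n y_j y_l\,\xx^\top C_{jl}\xx$, the choice of $\yy_0$ gives $y_j y_l=1/n$ for all $j,l$, so
\begin{equation*}
f_{\mathcal B}(\xx,\yy_0)=\frac{1}{n}\xx^\top\Bigl(\sum_{j,l=1}^n C_{jl}\Bigr)\xx=\frac{1}{n}\xx^\top\Bigl(\sum_{l=1}^n C_l\Bigr)\xx=\xx^\top\overline{C}\xx,
\end{equation*}
where I have used the definitions $C_l=\sum_{j=1}^n C_{jl}$ and $\overline{C}=\frac{1}{n}\sum_{l=1}^n C_l$ from \eqref{cd} and \eqref{CD}. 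Now let $\xx^\star$ be a unit eigenvector of the symmetric matrix $\overline{C}$ associated with $\beta_{\max}(\overline{C})$. Plugging $(\xx^\star,\yy_0)$ into the lower bound gives
\begin{equation*}
\rho_M(\mathcal B)\geq f_{\mathcal B}(\xx^\star,\yy_0)=(\xx^\star)^\top\overline{C}\,\xx^\star=\beta_{\max}(\overline{C}).
\end{equation*}

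Symmetrically, I would take $\xx_0=\frac{1}{\sqrt{m}}(1,\ldots,1)^\top\in\mathbb R^m$, use the dual representation $f_{\mathcal B}(\xx,\yy)=\sum_{i,k=1}^m x_i x_k\,\yy^\top D_{ik}\yy$, and obtain $f_{\mathcal B}(\xx_0,\yy)=\yy^\top\overline{D}\yy$; choosing $\yy^\star$ to be a unit eigenvector of $\overline{D}$ for $\beta_{\max}(\overline{D})$ yields $\rho_M(\mathcal B)\geq\beta_{\max}(\overline{D})$. Combining the two inequalities gives the claim. There is essentially no obstacle: the only things to verify are the telescoping identity $\sum_{j,l}C_{jl}=n\overline{C}$ (and its counterpart for $\overline{D}$) and the fact that the test vectors have unit Euclidean norm, both of which are immediate from the definitions.
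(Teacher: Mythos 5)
Your proof is correct and follows essentially the same route as the paper: both use the variational characterization $\rho_M(\mathcal B)=\max_{\|\xx\|=\|\yy\|=1}f_{\mathcal B}(\xx,\yy)$ (via Lemma \ref{dingth6}), substitute the uniform unit vector for one argument to reduce the bi-quadratic form to $\xx^\top\overline{C}\xx$ (resp.\ $\yy^\top\overline{D}\yy$), and then maximize the resulting quadratic form over the unit sphere to obtain $\beta_{\max}$. The only cosmetic difference is that you exhibit the maximizing eigenvector explicitly while the paper writes the final step as a $\max$ over $\xx$; the content is identical.
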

\begin{proof}
Let $\lambda^{\ast}$ be the greatest M-eigenvalue of $\mathcal B$. Since the MES matrix $\overline{C}$ is symmetric,
we obtain
\begin{align}\label{ineq-lambda}
\begin{aligned}
\lambda^{\ast}&={\max\limits_{\xx \in \mathbb R^{m}, \yy \in \mathbb R^{n}}\big\{\mathcal B (\xx,\yy,\xx,\yy) :  \xx^{\top}\xx = \yy^{\top}\yy = 1\big\}}\\
&={\max\limits_{\xx \in \mathbb R^{m}, \yy \in \mathbb R^{n}}\bigg\{\sum_{j,l=1}^{n}\sum_{i,k=1}^{m}b_{ijkl}x_{i}y_{j}x_{k}y_{l} : \xx^{\top}\xx = \yy^{\top}\yy = 1\bigg\}}\\
&\geq\max\limits_{\xx \in \mathbb R^{m}}{\bigg\{\frac{1}{n}\sum_{j,l=1}^{n}\sum_{i,k=1}^{m}b_{ijkl}x_{i}x_{k}:\yy
=(\frac{1}{\sqrt{n}},\frac{1}{\sqrt{n}},\cdots,\frac{1}{\sqrt{n}})^{\top}, \xx^{\top}\xx = 1\bigg\}}\\
&={\max\limits_{\xx \in \mathbb R^{m}}\bigg\{\frac{1}{n}\sum_{j,l=1}^{n}\xx^{\top}C_{jl}\xx: \xx^{\top}\xx = 1\bigg\}=\max\limits_{\xx \in \mathbb R^{m}}\big\{\xx^{\top}\overline{C}\xx: \xx^{\top}\xx = 1\big\}}\\
&={\beta_{\max}(\overline{C}).}
\end{aligned}
\end{align}
By Lemma \ref{dingth6}, we obtain $\rho_{M}(\mathcal B)=\lambda^{\ast}\geq\beta_{\max}(\overline{C})$. Symmetrically, 
we can obtain $\rho_{M}(\mathcal B)=\lambda^{\ast}\geq\beta_{\max}(\overline{D})$. The proof is completed.
\end{proof}

Note that when taking $\yy=(\frac{1}{\sqrt{n}},\frac{1}{\sqrt{n}},\cdots,\frac{1}{\sqrt{n}})^{\top}$ and the eigenvector $\xx^{\star}$ corresponding to  $\beta_{\max}(\overline{C})$, $\beta_{\max}(\overline{C})$ reaches a lower bound for $\rho_{M}(\mathcal B)$,
thus it is a candidate solution of approximation for maximization problem (\ref{optimalcondition}). Similarly,  \big\{$\xx=(\frac{1}{\sqrt{m}},\frac{1}{\sqrt{m}},\cdots,\frac{1}{\sqrt{m}})^{\top}$, the eigenvector $\yy^{\star}$ corresponding to  $\beta_{\max}(\overline{D})$, $\beta_{\max}(\overline{D})$\big\} is another candidate solution of approximation for (\ref{optimalcondition}). Meanwhile, we can also choose  $\xx^{\star}$ and $\yy^{\star}$ as the initial vectors for the iteration of the BIM-algorithm.

Finally, we give an example to demonstrate the above theoretical results from a numerical point of view.

\textbf{Example 2.1}
~In this example, we consider three nonnegative fourth-order PS-tensors given as follows.

(I) $\mathcal B_1=(b_{ijkl})\in\mathbb R^{3\times 3\times 3\times 3}$, which is Example $4.2$ in \cite{wang2009practical},  where
\begin{spacing}{1.5}
\setlength{\arraycolsep}{2pt}
\tiny
$$\mathcal B_{1}^{(:,:,1,1)}=\begin{bmatrix}
    1.9832 &1.0023 &4.2525\\
    2.6721 &3.2123 &2.8761\\
    4.6384 &2.9484 &4.0319
\end{bmatrix},
\mathcal B_{1}^{(:,:,2,1)}=\begin{bmatrix}
    2.6721 &3.2123 &2.8761\\
    3.0871 &0.1393 &4.4704\\
    1.7450 &3.0394 &4.6836
\end{bmatrix},
\mathcal B_{1}^{(:,:,3,1)}=\begin{bmatrix}
   4.6384 &2.9484 &4.0319\\
   1.7450 &3.0394 &4.6836\\
   0.3741 &1.6947 &2.7677
\end{bmatrix},$$
$$\mathcal B_{1}^{(:,:,1,2)}=\begin{bmatrix}
  1.0023 &4.9748 &2.3701\\
  3.2123 &1.3024 &3.2064\\
  2.9484 &4.9946 &3.8951
\end{bmatrix},
\mathcal B_{1}^{(:,:,2,2)}=\begin{bmatrix}
    3.2123 &1.3024 &3.2064\\
    0.1393 &4.9456 &2.9980\\
    3.0394 &4.3263 &0.5925
\end{bmatrix},
\mathcal B_{1}^{(:,:,3,2)}=\begin{bmatrix}
    2.9484 &4.9946 &3.8951\\
    3.0394 &4.3263 &0.5925\\
    1.6947 &4.2633 &0.1524
\end{bmatrix},$$
$$
\mathcal B_{1}^{(:,:,1,3)}=\begin{bmatrix}
    4.2525 &2.3701 &2.4709\\
    2.8761 &3.2064 &3.4492\\
    4.0319 &3.8951 &0.6581
\end{bmatrix},
\mathcal B_{1}^{(:,:,2,3)}=\begin{bmatrix}
   2.8761 &3.2064 &3.4492\\
   4.4704 &2.9980 &0.4337\\
   4.6836 &0.5925 &4.3514
\end{bmatrix},
\mathcal B_{1}^{(:,:,3,3)}=\begin{bmatrix}
   4.0319 &3.8951 &0.6581\\
   4.6836 &0.5925 &4.3514\\
   2.7677 &0.1524 &2.2336
\end{bmatrix}.$$
\end{spacing}
(II) An elastic modulus tensor $\mathcal C=(c_{ijkl})\in\mathbb R^{3\times 3\times 3\times 3}$ in equilibrium equations $$c_{ijkl}(1 + \nabla u)u_{k,lj}=0$$
that comes form Example $2.7$ in \cite{li2019checkable}, where $u_{k}(X)(k = 1, 2, 3)$ is the
displacement field ($X$ is the coordinate of a material point in the reference configuration), $u_{k,lj}$ is the $(l,j)$-th element of Jacobian matrix for the second-order partial derivatives of the $k$-th element of $u(X)$, $\mathcal C$ belongs to the rhombic system with nine elasticities \cite{gurtin1973linear},  and its nonzero entries are
$$c_{1111}=6, c_{2222}=8, c_{3333}=10, c_{1122}=1, c_{2211}= 1, c_{2233}=2, c_{3322}=2,$$
$$c_{1133}= 3, c_{3311}=3, c_{2323}=4, c_{3223}=4, c_{2332}= 4, c_{3232}= 4, c_{1212}= 5,$$
$$c_{2112}=5, c_{1221}=5, c_{2121}=5, c_{1313}=6, c_{3113}=6, c_{1331}=6, c_{3131}=6.$$
According to \cite{li2019checkable}, the equilibrium equations are of great importance in the theory of elasticity \cite{gurtin1973linear,Knowles1975}. Li et al. has transformed $\mathcal C$ into a nonnegative elasticity tensor $\mathcal B_{2}=(b_{ijkl})\in\mathbb R^{3\times 3\times 3\times 3}$ by taking
$b_{ijkl}=\frac{1}{4}(c_{ijkl} + c_{kjil} + c_{ilkj} + c_{klij}), \forall i, j, k, l \in [3],$
the nonzero entries of $\mathcal B_2$ are
$$b_{1111}=6, b_{1212}=5, b_{1313}=6, b_{2121}=5, b_{2222}=8,$$
$$b_{2323}=4, b_{3131}=6, b_{3232}=4, b_{3333}=10.$$

(III) $\mathcal B_3=(b_{ijkl})\in\mathbb R^{3\times 3\times 3\times 3}$ is a nonnegative fourth-order PS-tensor generated by us, such that each element in $\mathcal B_3$ is uniformly sampled from integers $1,2,3,4,$ and $5$. The content of $\mathcal B_3$ is provided as follows
\begin{spacing}{1}
\footnotesize$$\mathcal B_{3}^{(:,:,1,1)}=\begin{bmatrix}
  	     1 &    1&    2\\
	     2 &    4&    5\\
	     3 &    2&    3
\end{bmatrix},~~~~~~
\mathcal B_{3}^{(:,:,2,1)}=\begin{bmatrix}
    	 2  &   4&     5\\
	     1  &   5&     1\\
	     4  &   3&     3
\end{bmatrix},~~~~~~
\mathcal B_{3}^{(:,:,3,1)}=\begin{bmatrix}
        3&     2&     3\\
	    4&     3&     3\\
	    3&     4&     1
\end{bmatrix},$$
$$\mathcal B_{3}^{(:,:,1,2)}=\begin{bmatrix}
 	     1&    3&     3\\
	     4&    1&     4\\
	     2&    3&     4
\end{bmatrix},~~~~~~
\mathcal B_{3}^{(:,:,2,2)}=\begin{bmatrix}
 	     4&     1&     4\\
	     5&     3&     2\\
	     3&     4&     2
\end{bmatrix},~~~~~~
\mathcal B_{3}^{(:,:,3,2)}=\begin{bmatrix}
  	     2&     3&     4\\
	     3&     4&     2\\
	     4&     5&     3
\end{bmatrix},$$
$$\mathcal B_{3}^{(:,:,1,3)}=\begin{bmatrix}
  	     2&     3&     3\\
	     5&     4&     3\\
	     3&     4&     3
\end{bmatrix},~~~~~~
\mathcal B_{3}^{(:,:,2,3)}=\begin{bmatrix}
	     5&     4&     3\\
	     1&     2&     4\\
	     3&     2&     3
\end{bmatrix},~~~~~~
\mathcal B_{3}^{(:,:,3,3)}=\begin{bmatrix}
	     3&     4&     3\\
	     3&     2&     3\\
	     1&     3&     5
\end{bmatrix}.
$$
\end{spacing}

The bounds obtained by Theorems \ref{Theorem-set1} and \ref{Theorem-set2}
are shown in Table $1$. In fact, we obtain $\rho_{M}(\mathcal B_1)=\lambda^{\ast}\approx26.1188$
$\text{and}~\rho_{M}(\mathcal B_3)=\lambda^{\ast}\approx27.1669$
by BIM, and $\mathcal B_2$ is a nonnegative diagonal elasticity tensor, obviously, $\rho_{M}(\mathcal B_2)=\lambda^{\ast}=10$.
\begin{center}\label{tab:ulb}
\small
\begin{tabular}{p{40mm}p{60mm}p{0mm}p{0mm}}\\
\multicolumn{2}{c}{$\textbf{Table 1}$: Upper and lower bounds of the M-spectral radius in Example 2.1.}\\\hline
~Theorems \ref{Theorem-set1} and \ref{Theorem-set2}&$~~~~~~~~~~~~~~~26.1160\leq\rho_{M}(\mathcal B_1)\leq26.5099$\\
~Theorems \ref{Theorem-set1} and \ref{Theorem-set2}&$~~~~~~~~~~\,~~~~~~6.6667\leq\rho_{M}(\mathcal B_2)\leq10$\\
~Theorems \ref{Theorem-set1} and \ref{Theorem-set2}&$~~~~~~~~~~~~~~~27.1496\leq\rho_{M}(\mathcal B_3)\leq27.7779$\\
\hline
\end{tabular}
\end{center}


\subsection{
Comparison between upper bounds for M-spectral radius}\label{com_sec}
In this section, we compare this upper bound with some of the existing ones, and show that one is tight.
\begin{figure}[htbp]\label{fig:1000}
\centering
\begin{minipage}[t]{1\textwidth}
\centering
\includegraphics[width=4.8in]{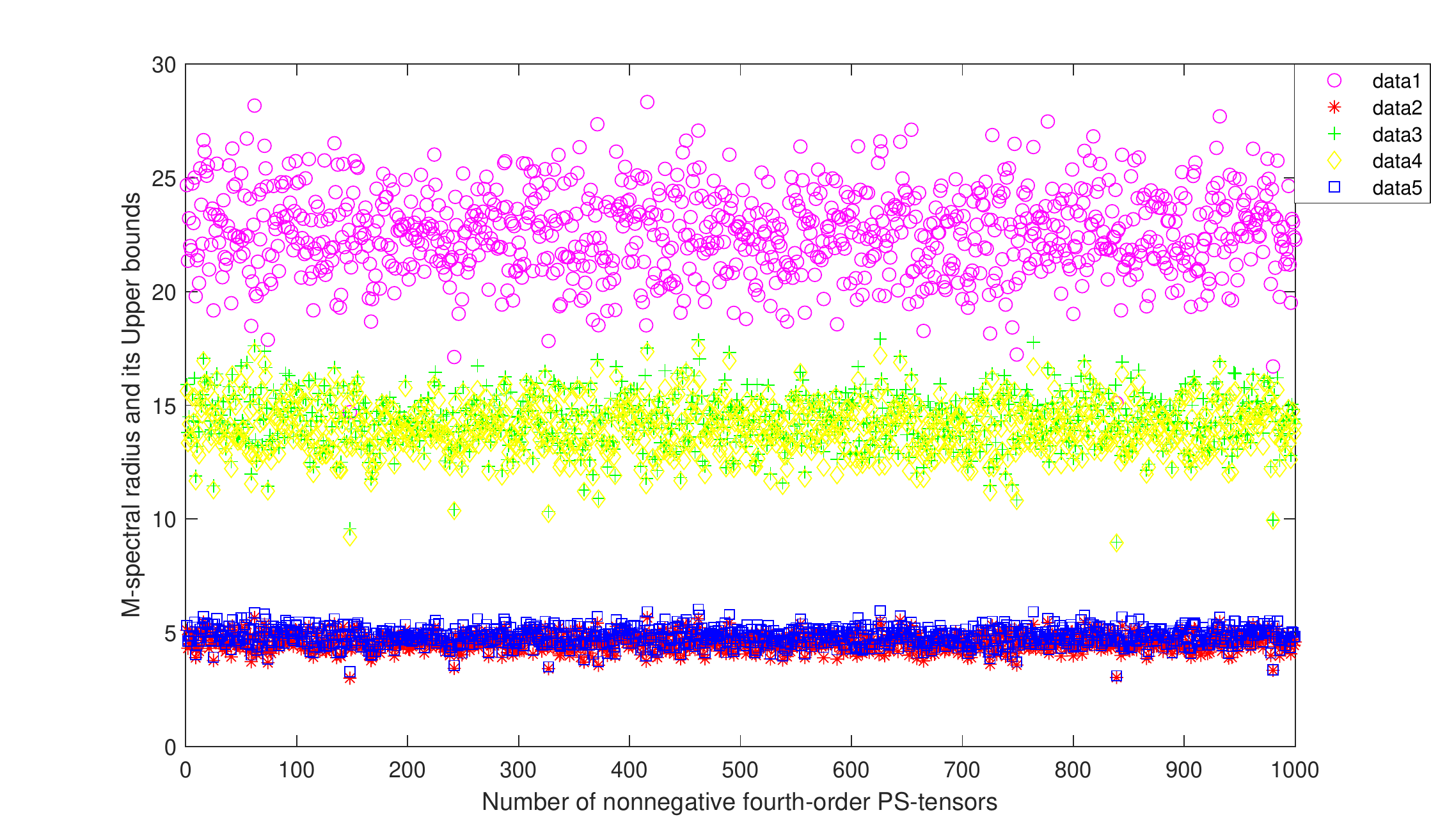}
\end{minipage}
\caption{Numerical comparison of different upper bounds for 1000 randomly generated nonnegative fourth-order PS-tensors. \textbf{data 2} represents the M-spectral radius obtained by BIM, \textbf{data 5} represents the proposed upper bounds of M-spectral radius by Theorem \ref{Theorem-set1}. \textbf{data 1}, \textbf{data 3}, and \textbf{data 4} represent the upper bounds of the M-spectral radius obtained by $\tau,~\tau_{1},~\tau_{2}$ in \cite{wang2009practical,li2019m}, separately.}
\end{figure}
First, when compared with the shift parameter $\tau$ given when Wang et al. proposed BIM, we obtain the following theorem.
\begin{theorem}\label{Theorem-comparison-1}
Let $\mathcal B=(b_{ijkl})$ be a nonnegative fourth-order PS-tensor.
Then
\begin{equation*}
\rho_{M}(\mathcal B)\leq R_1(\mathcal B)\leq\tau,~~\rho_{M}(\mathcal B)\leq R_2(\mathcal B)\leq\tau.
\end{equation*}
\end{theorem}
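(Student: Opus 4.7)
By Theorem~\ref{Theorem-set1}, the inequalities $\rho_M(\mathcal B)\le R_1(\mathcal B)$ and $\rho_M(\mathcal B)\le R_2(\mathcal B)$ are already in place, so the plan is to prove only the two new inequalities $R_1(\mathcal B)\le\tau$ and $R_2(\mathcal B)\le\tau$. Since $\mathcal B\ge 0$, each ES-matrix $C_l$ and $D_i$ is symmetric with nonnegative entries, and the classical Perron--Frobenius/Gershgorin row-sum bound for such matrices yields
\[
\beta_{\max}(C_l)\le\max_{i\in[m]}\sum_{k=1}^{m}(C_l)_{ik},\qquad \beta_{\max}(D_i)\le\max_{j\in[n]}\sum_{l=1}^{n}(D_i)_{jl},
\]
so it is enough to bound these row sums by $\tau$.

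My next step is to identify each of these row sums with a row sum of the unfolded matrix $A\in\mathbb R^{mn\times mn}$. For $D_i$ the identification is immediate: $(D_i)_{jl}=\sum_k b_{ijkl}$, hence
\[
\sum_{l=1}^{n}(D_i)_{jl}=\sum_{k,l}b_{ijkl}=\sum_{t=1}^{mn}A_{n(i-1)+j,\,t},
\]
since $(k,l)\mapsto n(k-1)+l$ is a bijection of $[m]\times[n]$ onto $[mn]$. For $C_l$, $(C_l)_{ik}=\sum_j b_{ijkl}$ and $\sum_k(C_l)_{ik}=\sum_{j,k}b_{ijkl}$; invoking the second partial symmetry $b_{ijkl}=b_{ilkj}$ (swap of the 2nd and 4th indices) rewrites this as $\sum_{k,p}b_{ilkp}=\sum_{t=1}^{mn}A_{n(i-1)+l,\,t}$, again a full row sum of $A$. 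Taking maxima, both $R_1(\mathcal B)$ and $R_2(\mathcal B)$ are therefore dominated by $M(A):=\max_{s\in[mn]}\sum_{t}A_{s,t}$, the largest row sum of $A$.

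It then remains to check $M(A)\le\tau$. Chaining the two PS identities $a_{ijkl}=a_{kjil}=a_{ilkj}$ yields $a_{ijkl}=a_{klij}$, which is exactly $A_{st}=A_{ts}$, so $A$ is symmetric. For any $s_0\in[mn]$,
\[
\sum_{t=1}^{mn}A_{s_0,t}=A_{s_0,s_0}+\sum_{t>s_0}A_{s_0,t}+\sum_{t<s_0}A_{t,s_0},
\]
a sum of $mn$ nonnegative terms, each of which is a distinct entry of the upper-triangular set $\{A_{s,t}:1\le s\le t\le mn\}$. Because $A\ge 0$, this row sum is bounded by $\sum_{1\le s\le t\le mn}A_{s,t}=\tau$, giving $M(A)\le\tau$ and hence $R_1(\mathcal B),R_2(\mathcal B)\le\tau$. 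The only nontrivial step in the plan is the use of the PS symmetry $b_{ijkl}=b_{ilkj}$ to turn the "grid" sum $\sum_{j,k}b_{ijkl}$ for $C_l$ into an honest row sum of $A$; without it one would be stuck with a proper submatrix of $A$ and would need a more delicate injective-counting argument to recover the bound $\tau$.
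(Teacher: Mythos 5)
Your argument is correct and rests on the same two ingredients as the paper's proof: the Ger\u{s}gorin row-sum bound for the symmetric nonnegative matrices $C_l$ (resp.\ $D_i$), and the observation that $\tau$, being the full upper-triangular sum of the nonnegative symmetric unfolding $A$, dominates the relevant row/triangular sums. The only cosmetic difference is that you route through row sums of $A$ (making explicit the reindexing via $b_{ijkl}=b_{ilkj}$), whereas the paper bounds $\tau$ directly against the upper-triangular sum of $C_l$ and then invokes Ger\u{s}gorin on $C_l$; these are the same argument reorganized.
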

\begin{proof}
In fact, it is obviously to prove the following two inequalities:
\begin{equation*}
\tau=\sum_{1\leq s\leq t\leq mn} |B_{st}|\geq\sum_{1\leq i\leq k\leq m}\sum_{j=1}^{n}|b_{ijkl}|,~\text{for}~\forall l\in[n].
\end{equation*}
\begin{equation*}
\tau=\sum_{1\leq s\leq t\leq mn} |B_{st}|\geq\sum_{1\leq j\leq l\leq n}\sum_{k=1}^{m} |b_{ijkl}|,~\text{for}~\forall i\in[m].
\end{equation*}
Next, we only need to prove the case of $R_1(\mathcal B)\leq\tau$, the other case is similar.
Since the ES matrix $C_l$ is symmetric, by  Ger{\u{s}}ghorin disk theorem \cite{gloub1996matrix}, we have
$$\sum_{1\leq i\leq k\leq m}|(C_l)_{ik}|\geq\max\limits_{i\in[m]}\bigg\{\sum_{k=1}^{m}|(C_l)_{ik}|\bigg\}\geq\beta_{\max}(C_l),~~\forall l\in[n].$$
Notice that $(C_l)_{ik}=\sum\limits_{j=1}^{n}b_{ijkl}$ and $b_{ijkl}$ is nonnegative.
$$\tau\geq\sum_{1\leq i\leq k\leq m}\sum_{j=1}^{n}|b_{ijkl}|=\sum_{1\leq i\leq k\leq m}|(C_l)_{ik}|\geq\beta_{\max}(C_l),~~\forall l\in[n],$$
thus, $\tau\geq\max\limits_{l\in[n]}\big\{\beta_{\max}(C_l)\big\}=R_1(\mathcal B).$
The conclusion follows.
\end{proof}

Next, we notice that Li et al. \cite{li2019m} gave two upper bounds of the M-spectral radius in 2019, $\tau_{1}:=\min\big\{\max\limits_{i\in[m]}\Psi_{i}(\mathcal B),~\max\limits_{l\in[n]}\Gamma_{l}(\mathcal B)\big\}$ and
$\tau_{2}:=\min\big\{\Theta_{1}(\mathcal B),~\Theta_{2}(\mathcal B)\big\}$, where
{\small
\begin{equation*}
\Theta_{1}(\mathcal B):=\max\limits_{j,l\in[n]\atop j\neq l}\frac{1}{2}\left\{\sum_{i,k=1}^{m}|b_{ilkl}|+\sqrt{\big(\sum_{i,k=1}^{m}|b_{ilkl}|\big)^{2}+4\big(\Gamma_{l}(\mathcal B)-\sum_{i,k=1}^{m}|b_{ilkl}|\big)\Gamma_{j}(\mathcal B)}\right\},
\end{equation*}}
{\small
\begin{equation*}\label{p2} \Theta_{2}(\mathcal B):=\max\limits_{i,k\in[m]\atop k\neq i}\frac{1}{2}\left\{\sum_{j,l=1}^{n}|b_{ijil}|+\sqrt{\big(\sum_{j,l=1}^{n}|b_{ijil}|\big)^{2}+4\big(\Psi_{i}(\mathcal B)-\sum_{j,l=1}^{n}|b_{ijil}|\big)\Psi_{k}(\mathcal B)}\right\},
\end{equation*}}
and
\begin{equation*}
\Gamma_{l}(\mathcal B):=\sum_{j=1}^{n}\sum_{i,k=1}^{m}|b_{ijkl}|,~\forall l\in[n],\; \Psi_{i}(\mathcal B):=\sum_{k=1}^{m}\sum_{j,l=1}^{n}|b_{ijkl}|,~\forall i\in[m].
\end{equation*}

As the second comparison, we prove that the upper bound in Theorem \ref{Theorem-set1} is smaller than $\tau_{1}$ proposed by Li et al. in 2019.
\begin{theorem}\label{Theorem-comparison-2}
Let $\mathcal B=(b_{ijkl})$ be a nonnegative fourth-order PS-tensor.
Then
\begin{equation*}
\rho_{M}(\mathcal B)\leq\min\big\{R_{1}(\mathcal B),R_{2}(\mathcal B)\big\}\leq\tau_{1}.
\end{equation*}
\end{theorem}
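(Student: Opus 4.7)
The first inequality $\rho_M(\mathcal B)\le \min\{R_1(\mathcal B),R_2(\mathcal B)\}$ is already supplied by Theorem~\ref{Theorem-set1}, so I only need to establish the second inequality $\min\{R_1(\mathcal B),R_2(\mathcal B)\}\le \tau_1$. My plan is to split this into the two symmetric half-statements
$$R_1(\mathcal B)\le \max_{l\in[n]}\Gamma_l(\mathcal B),\qquad R_2(\mathcal B)\le \max_{i\in[m]}\Psi_i(\mathcal B),$$
and then take the minimum on both sides to match the definition of $\tau_1$. Showing the two bounds separately is cleaner than trying to handle $\min\{R_1,R_2\}$ and $\tau_1$ jointly.

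For the first bound I would fix $l\in[n]$ and work with the symmetric nonnegative ES-matrix $C_l$. Because the entries of $C_l$ are nonnegative, the spectral radius coincides with the Perron eigenvalue and is classically bounded by the maximum absolute row sum (equivalently, a direct application of Ger\v{s}gorin's disk theorem to the symmetric nonnegative $C_l$ gives the same thing):
$$\beta_{\max}(C_l)\le \max_{s\in[m]}\sum_{t=1}^{m}(C_l)_{st}=\max_{s\in[m]}\sum_{t=1}^{m}\sum_{j=1}^{n}b_{sjtl}.$$
Since every $b_{ijkl}\ge 0$, enlarging the sum over $s$ to a sum over all $i\in[m]$ only increases it, yielding
$$\max_{s\in[m]}\sum_{t=1}^{m}\sum_{j=1}^{n}b_{sjtl}\le \sum_{i,t=1}^{m}\sum_{j=1}^{n}b_{ijtl}=\Gamma_l(\mathcal B).$$
Taking the maximum over $l$ then delivers $R_1(\mathcal B)\le \max_{l}\Gamma_l(\mathcal B)$.

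The second bound $R_2(\mathcal B)\le \max_{i}\Psi_i(\mathcal B)$ is the mirror image, obtained by repeating the argument with $D_i$ in place of $C_l$ and with the roles of the index pairs swapped. Finally, combining the two,
$$\min\{R_1(\mathcal B),R_2(\mathcal B)\}\le \min\Big\{\max_{l}\Gamma_l(\mathcal B),\;\max_{i}\Psi_i(\mathcal B)\Big\}=\tau_1,$$
which chained with Theorem~\ref{Theorem-set1} completes the proof. I do not expect any real obstacle here: the only subtle point is being explicit that nonnegativity of $\mathcal B$ (hence of $C_l$ and $D_i$) is what lets us upgrade the row-sum/Ger\v{s}gorin estimate into a comparison with the full tensor-entry sums $\Gamma_l$ and $\Psi_i$; without the sign assumption, one would have to insert absolute values and the chain of inequalities would have to be rearranged.
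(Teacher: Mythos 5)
Your proof is correct and follows essentially the same route as the paper: both bound $\beta_{\max}(C_l)$ (resp.\ $\beta_{\max}(D_i)$) via Ger\v{s}gorin's theorem by a sum of the nonnegative entries, then enlarge that sum to $\Gamma_l(\mathcal B)$ (resp.\ $\Psi_i(\mathcal B)$), and conclude by comparing the two minima. The only cosmetic difference is that you pass through the maximum row sum while the paper passes through the upper-triangular entry sum as the intermediate quantity; both bridges rest on the same disk argument.
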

\begin{proof}
Obviously, by  Ger{\u{s}}ghorin disk theorem \cite{gloub1996matrix}, 
$$\sum_{1\leq i\leq k\leq m}\sum_{j=1}^{n}|b_{ijkl}|=\sum_{1\leq i\leq k\leq m}|(C_l)_{ik}|\geq\beta_{\max}(C_l),~~\forall l\in[n].$$
Then
$$\Gamma_{l}(\mathcal B)=\sum_{j=1}^{n}\sum_{i,k=1}^{m}|b_{ijkl}|\geq\sum_{1\leq i\leq k\leq m}\sum_{j=1}^{n}|b_{ijkl}|\geq\beta_{\max}(C_l),~~\forall l\in[n],$$
this produced $\max\limits_{l\in[n]}\Gamma_{l}(\mathcal B)\geq R_{1}(\mathcal B)=\max\limits_{l\in[n]}\big\{\beta_{\max}(C_l)\big\}$. Symmetrically, we can obtain
$$\max\limits_{i\in[m]}\Psi_{i}(\mathcal B)\geq R_{2}(\mathcal B)=\max\limits_{l\in[m]}\big\{\beta_{\max}(D_i)\big\}.$$ Therefore,
$$\min\big\{R_{1}(\mathcal B),R_{2}(\mathcal B)\big\}\leq\tau_{1}=\min\big\{\max\limits_{i\in[m]}\Psi_{i}(\mathcal B),~\max\limits_{l\in[n]}\Gamma_{l}(\mathcal B)\big\}.$$
The proof is completed.
\end{proof}

Finally, as a remark, we conjecture that the upper bound in Theorem \ref{Theorem-set1} may be very close to the M-spectral radius of the nonnegative fourth-order PS-tensor, based on evidences gathered in our numerical experiments. Specifically, we used MATLAB software (version $2020$a) in $\mathbb R^{3\times 3\times 3\times 3}$ to randomly generate 1000 elasticity tensors uniformly sampled from $[0,1]$. In fact, it is also apparent from Figure \ref{fig:1000} that the upper bound in Theorem \ref{Theorem-set1} is smaller than $\tau$, $\tau_{1}$ and $\tau_2$ in \cite{wang2009practical,li2019m}.

\subsection{Numerical experiment with new shift parameter}\label{enhance_sec}
\begin{figure}[htbp]\label{figure2}
\centering
\includegraphics[width=2.3in]{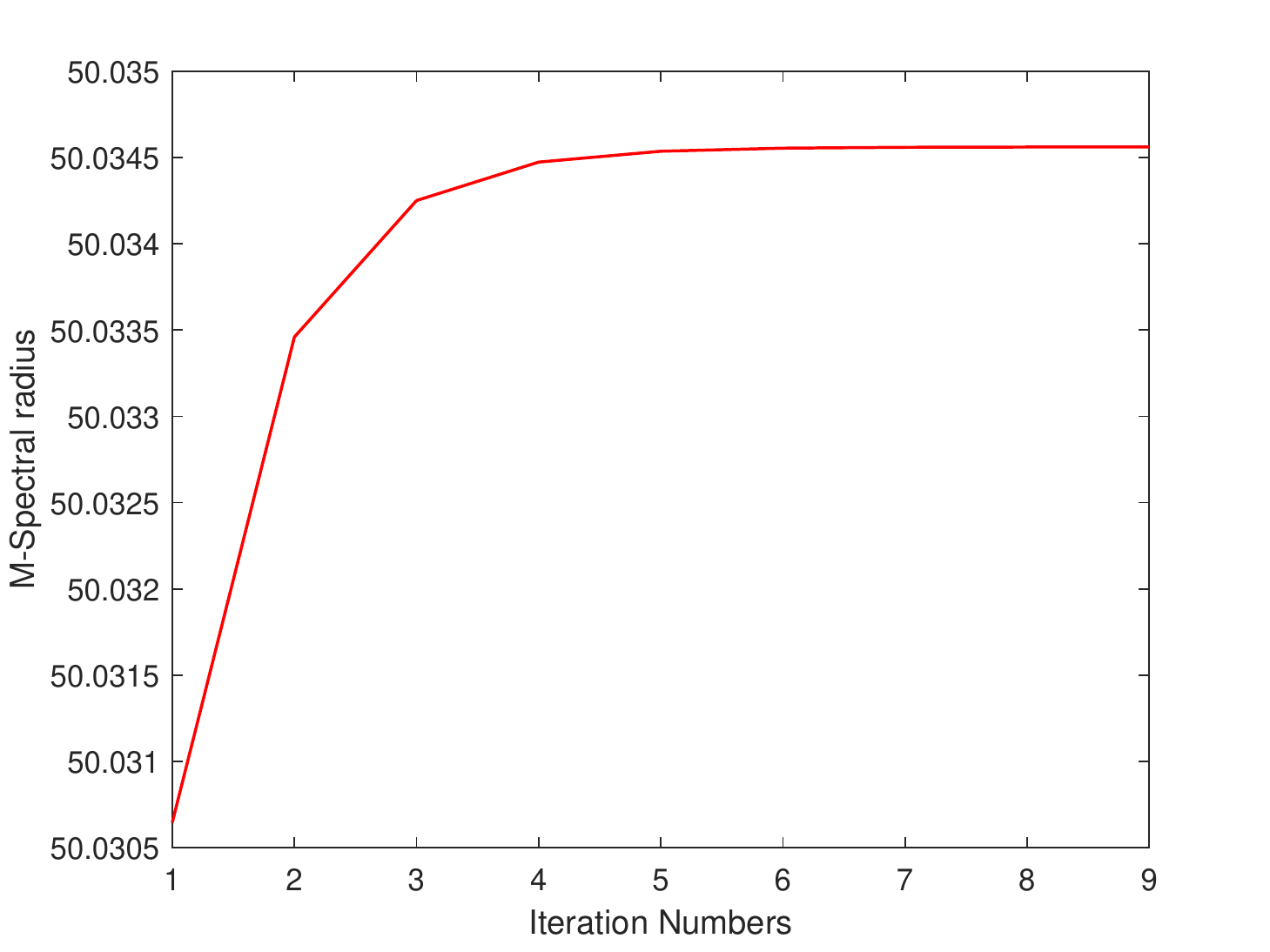}
\includegraphics[width=2.3in]{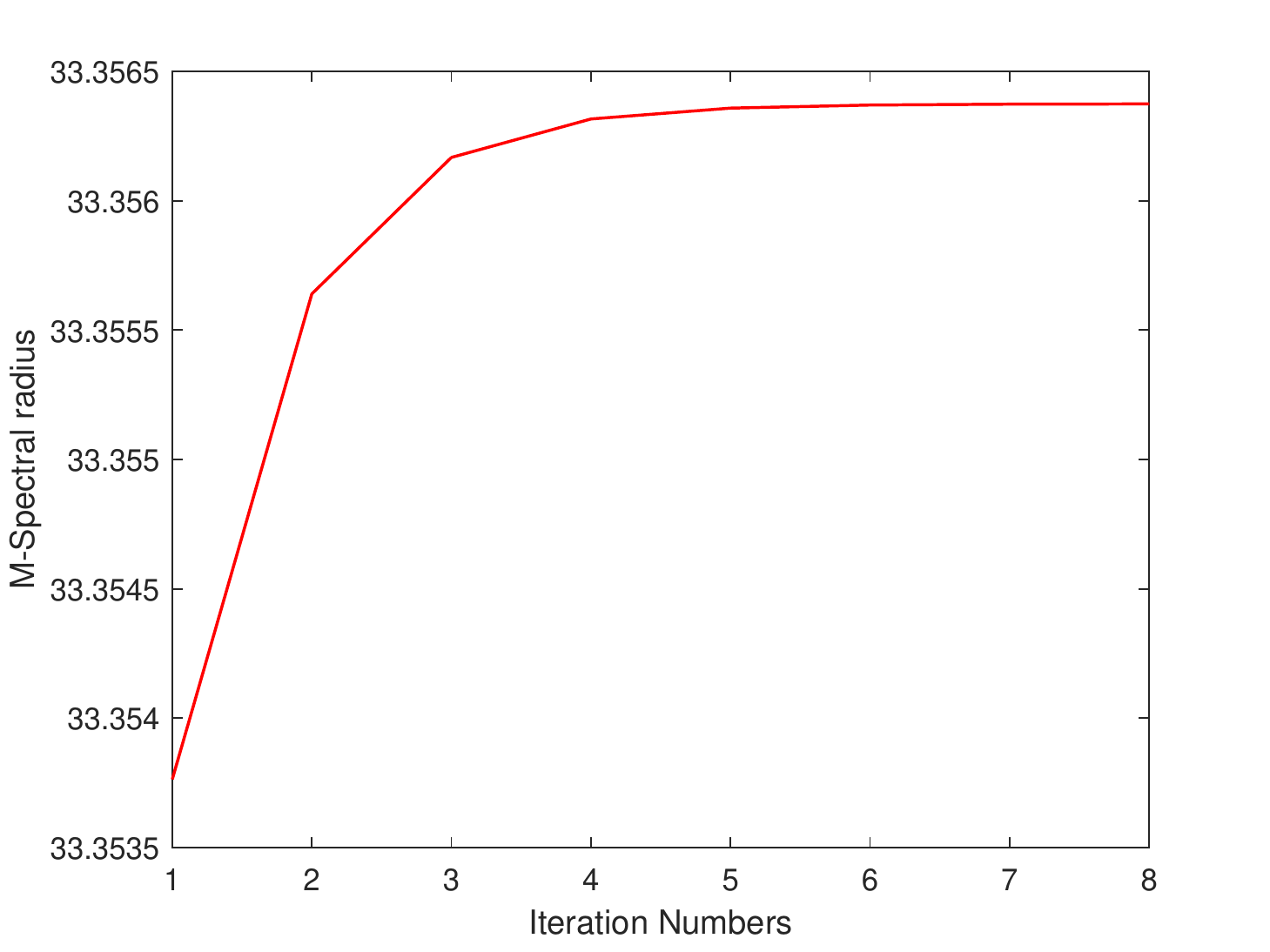}
\caption{BIM with the new shift parameter in Theorem \ref{Theorem-set1}. The stopping criterion is set to $1e-6$.
}
\end{figure}
In this section, all codes are run in
MATLAB R2020a, numerical experiment main demonstrate the significant enhancement to BIM by new shift parameter in Theorem \ref{Theorem-set1}.

First, BIM work better with new shift parameter as shown in Figure \ref{figure2}. 
In Figure \ref{figure2}, the entries of the two tensors with size $m=n=10$ are given at $h=2$ and $h=3$ by: $\mathcal A(i,p,i,q)=(1+\cos(p+q+i))/h,~\mathcal A(i,p,j,q)=(1+\sin(p+q+i+j))/h$,  where $i,j,p,q\in[10]$ and $j\neq i$. Here $h=2$ corresponds to the left figure in Figure \ref{figure2}, and $h=3$ to the right figure.
\begin{figure}[htbp]\label{fig:2}
\subfigure{
\includegraphics[scale=0.41]{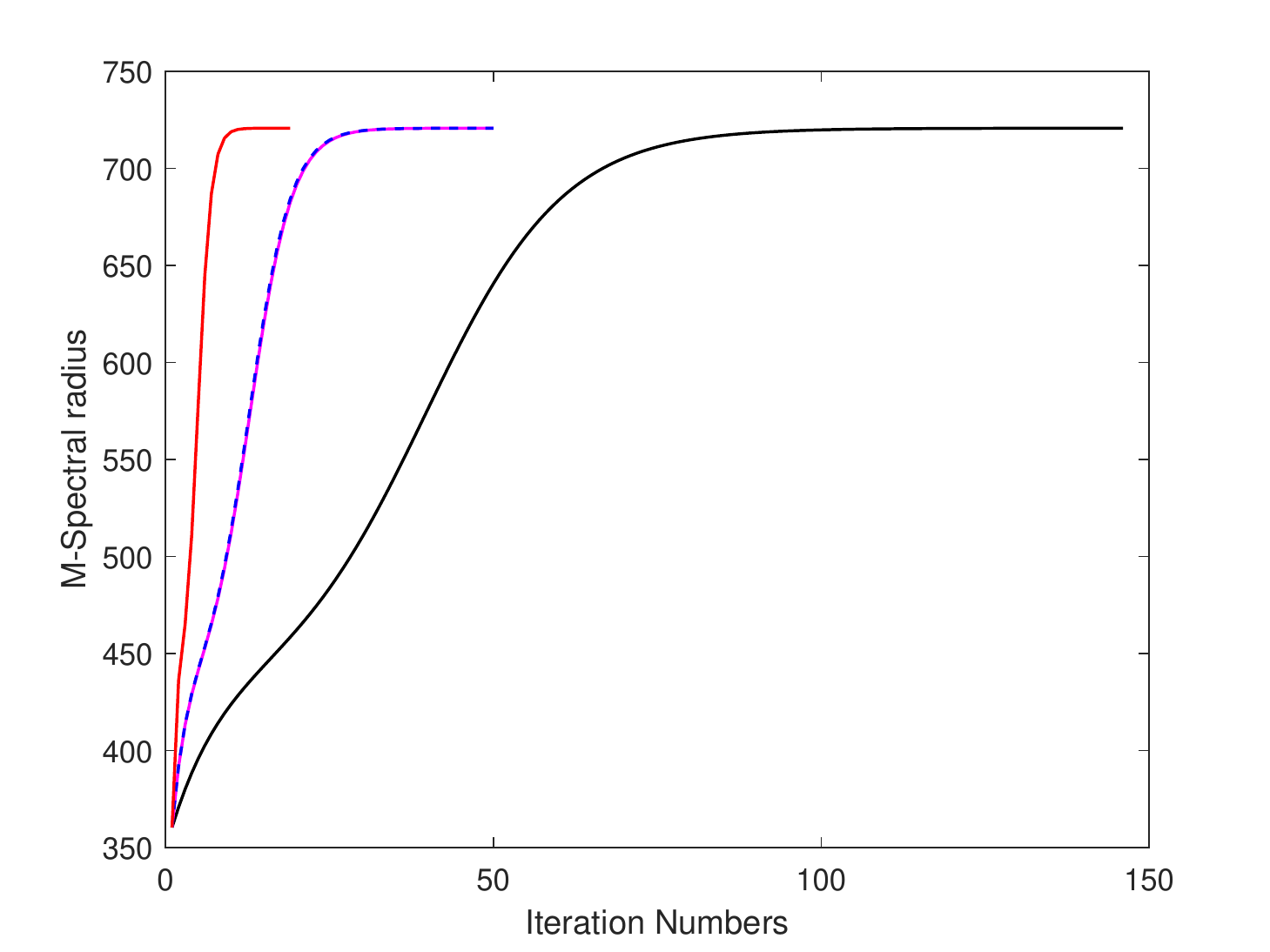} \label{3}
\includegraphics[scale=0.41]{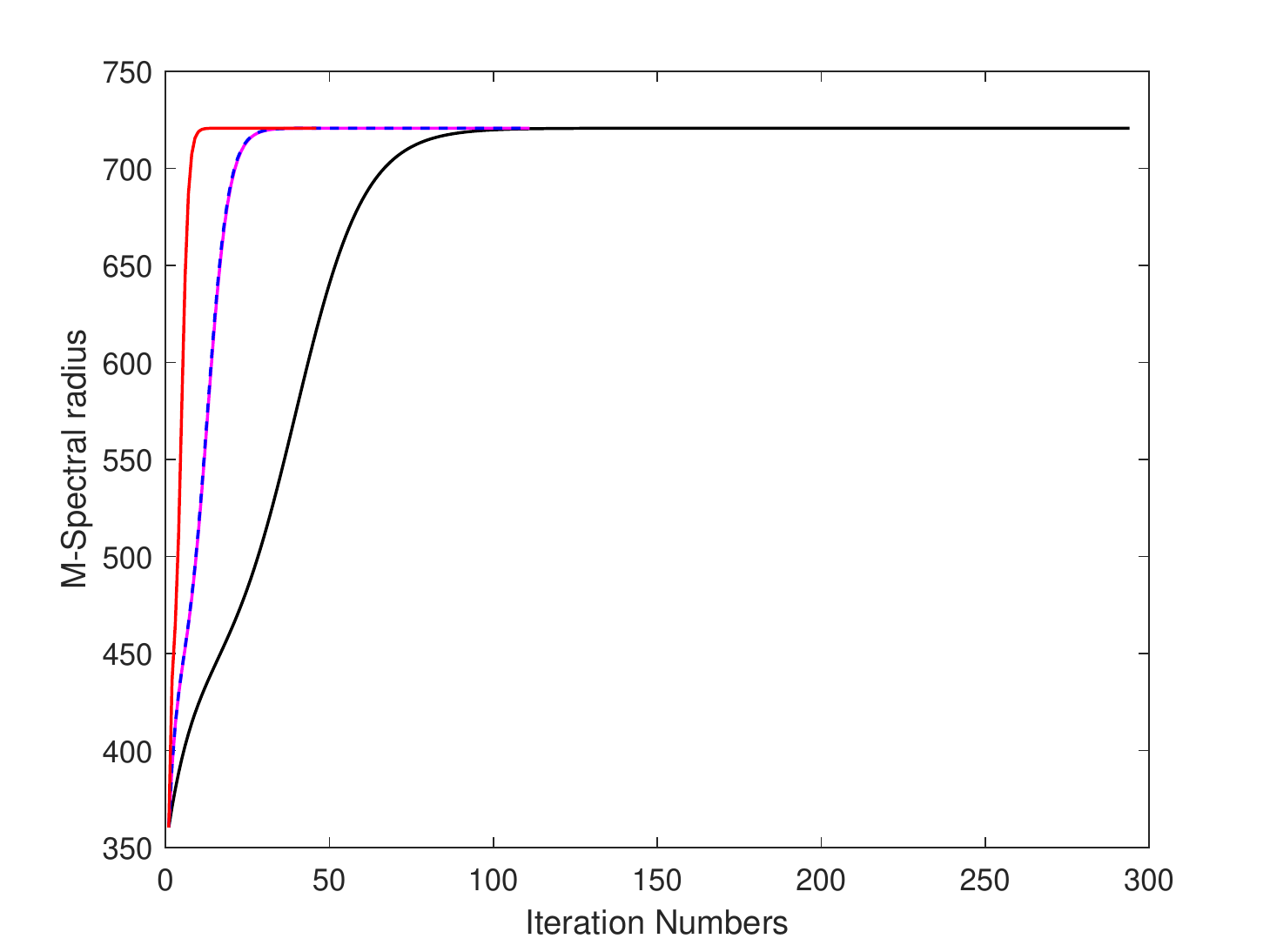} \label{4}
}
\subfigure{
\includegraphics[scale=0.41]{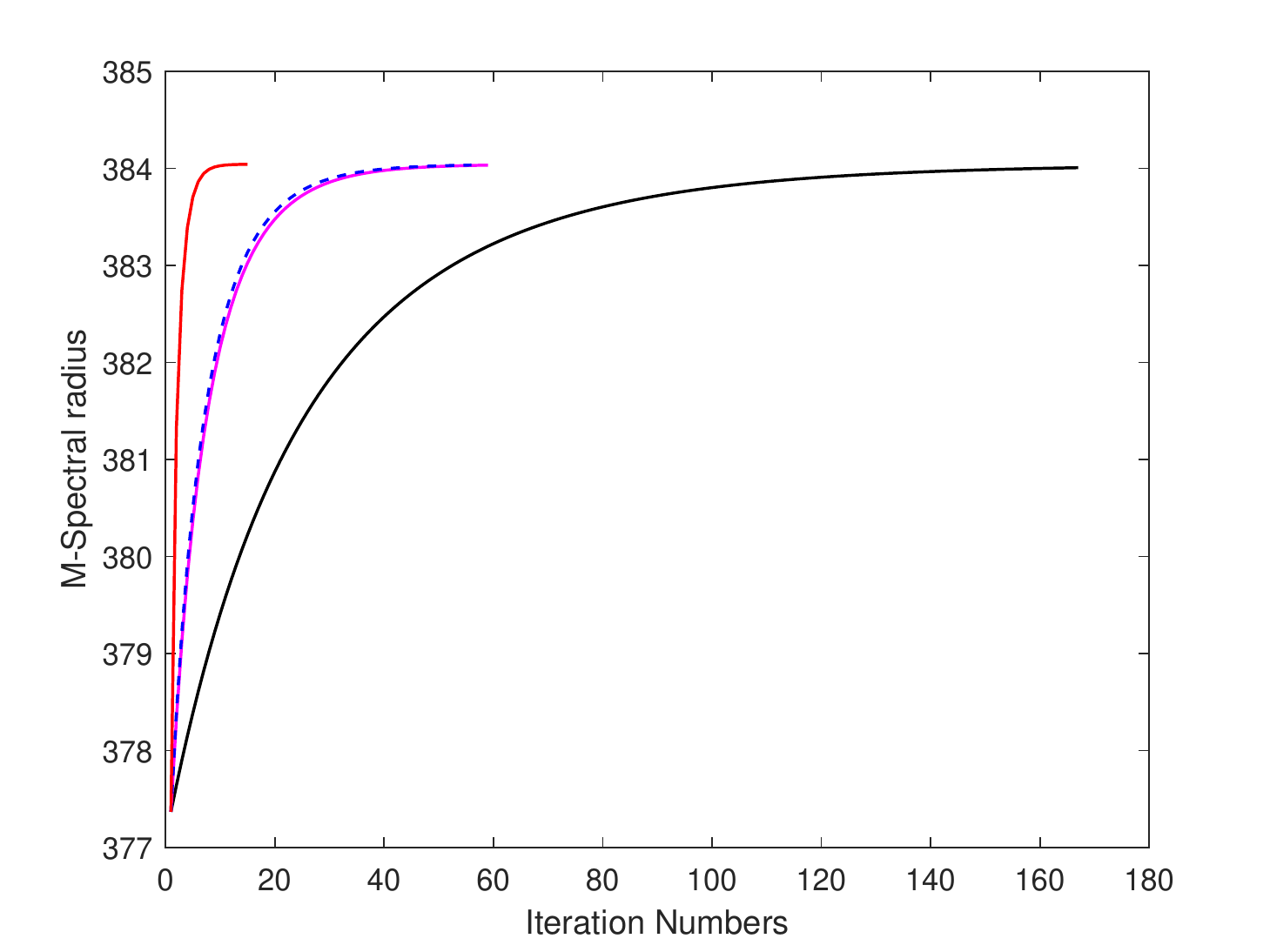} \label{7}
\includegraphics[scale=0.41]{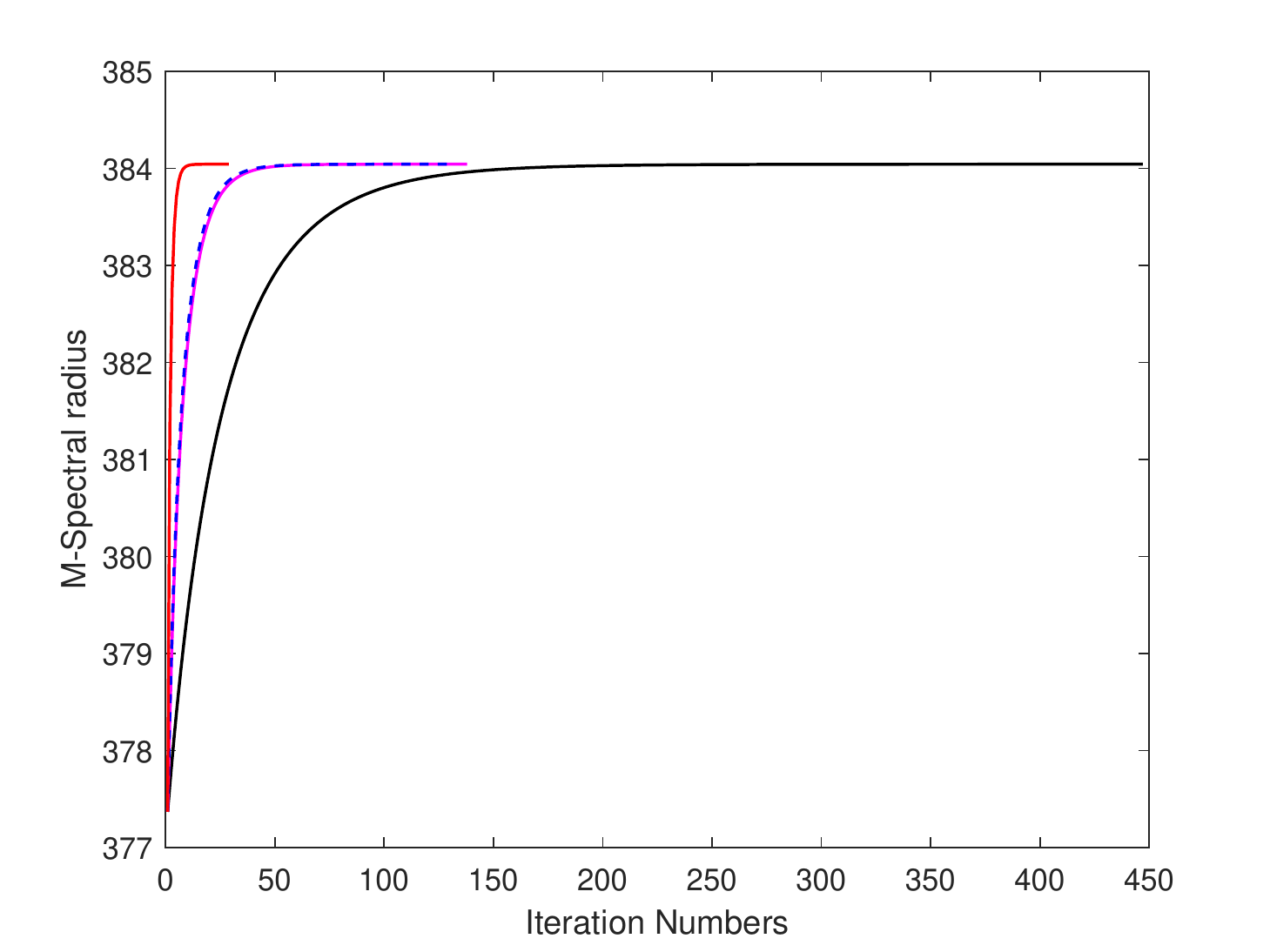} \label{8}
}
\subfigure{
\includegraphics[scale=0.41]{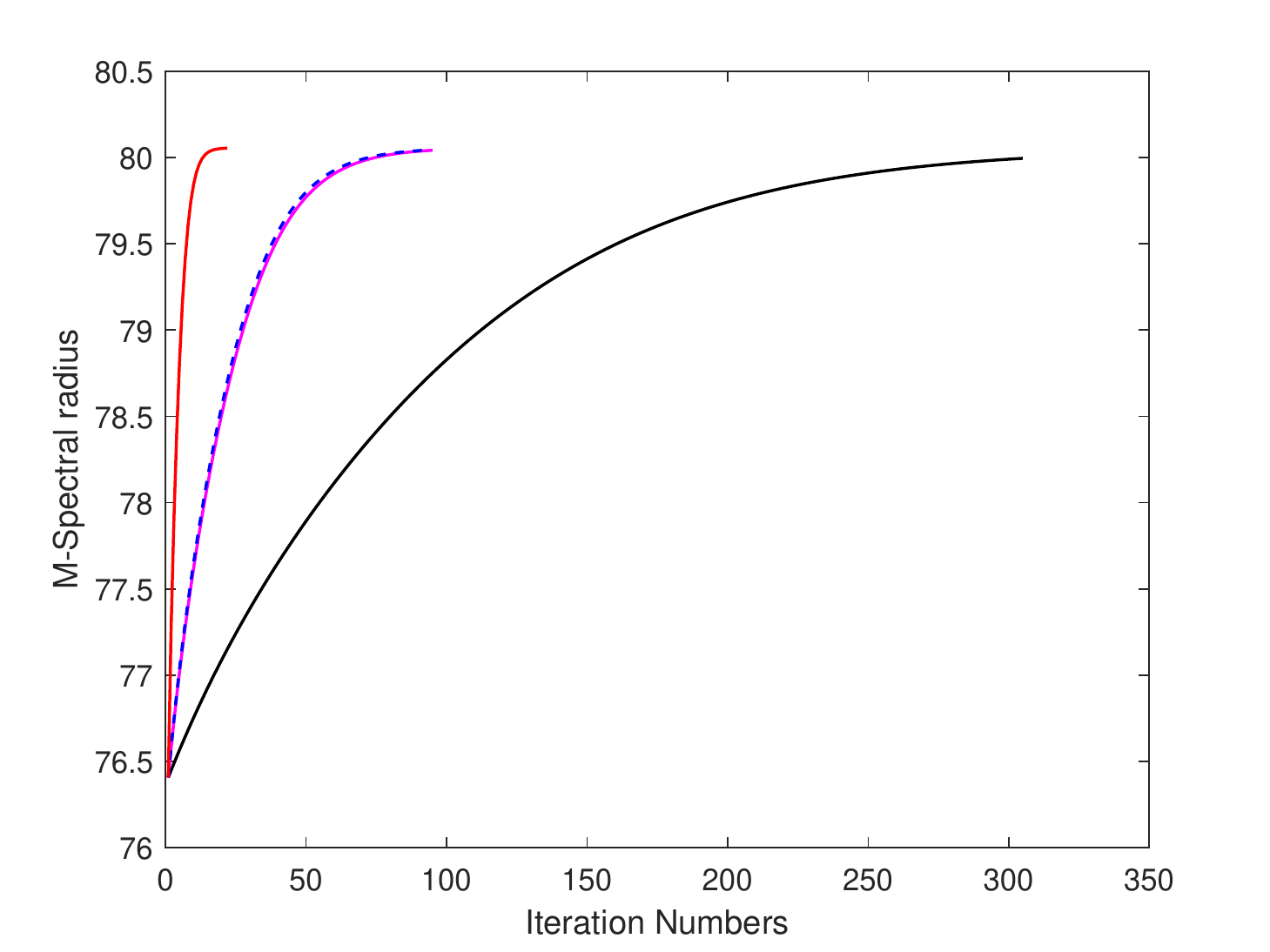} \label{1}
\includegraphics[scale=0.41]{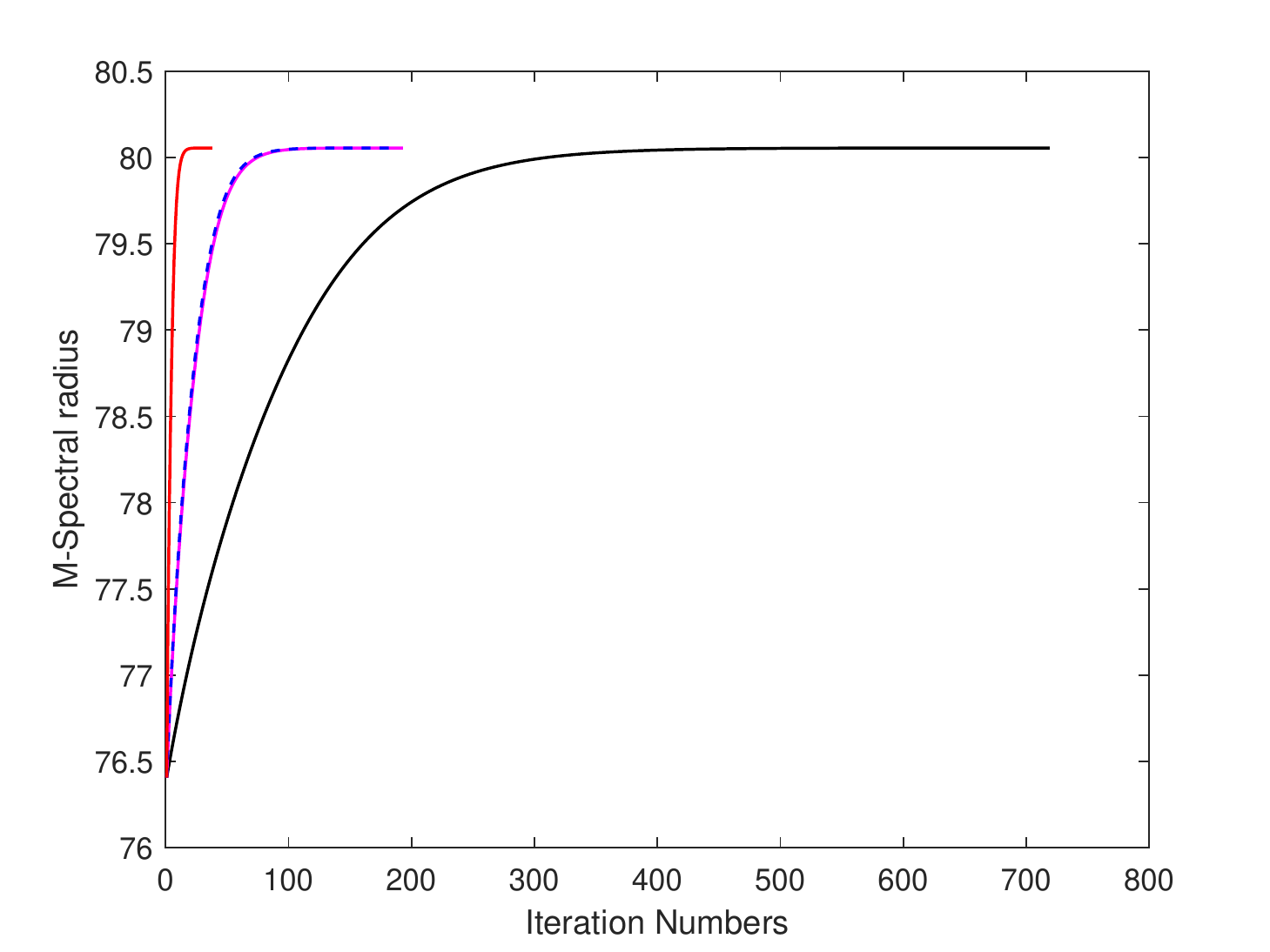} \label{2}
}
\subfigure{
\includegraphics[scale=0.41]{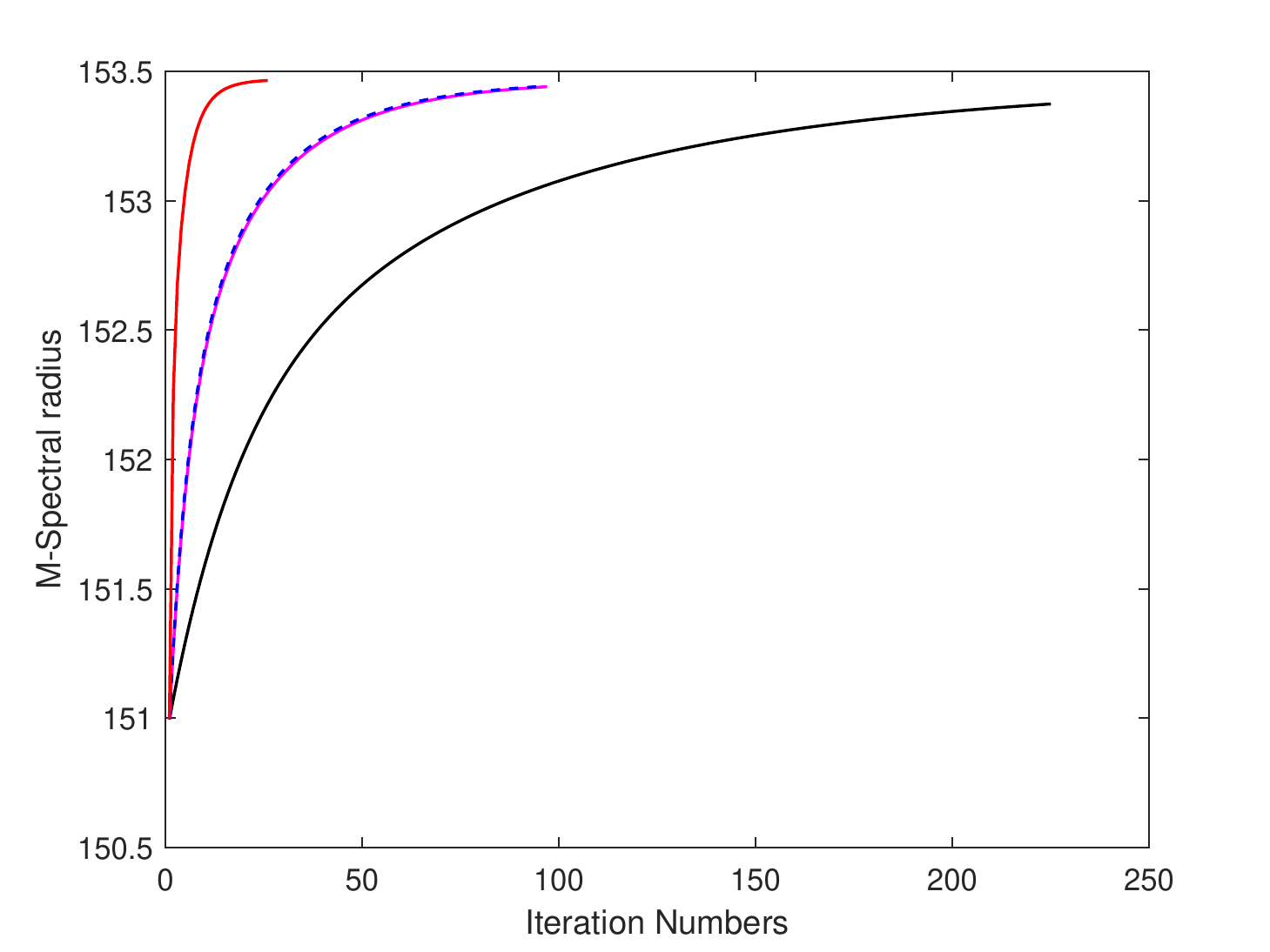} \label{5}
\includegraphics[scale=0.41]{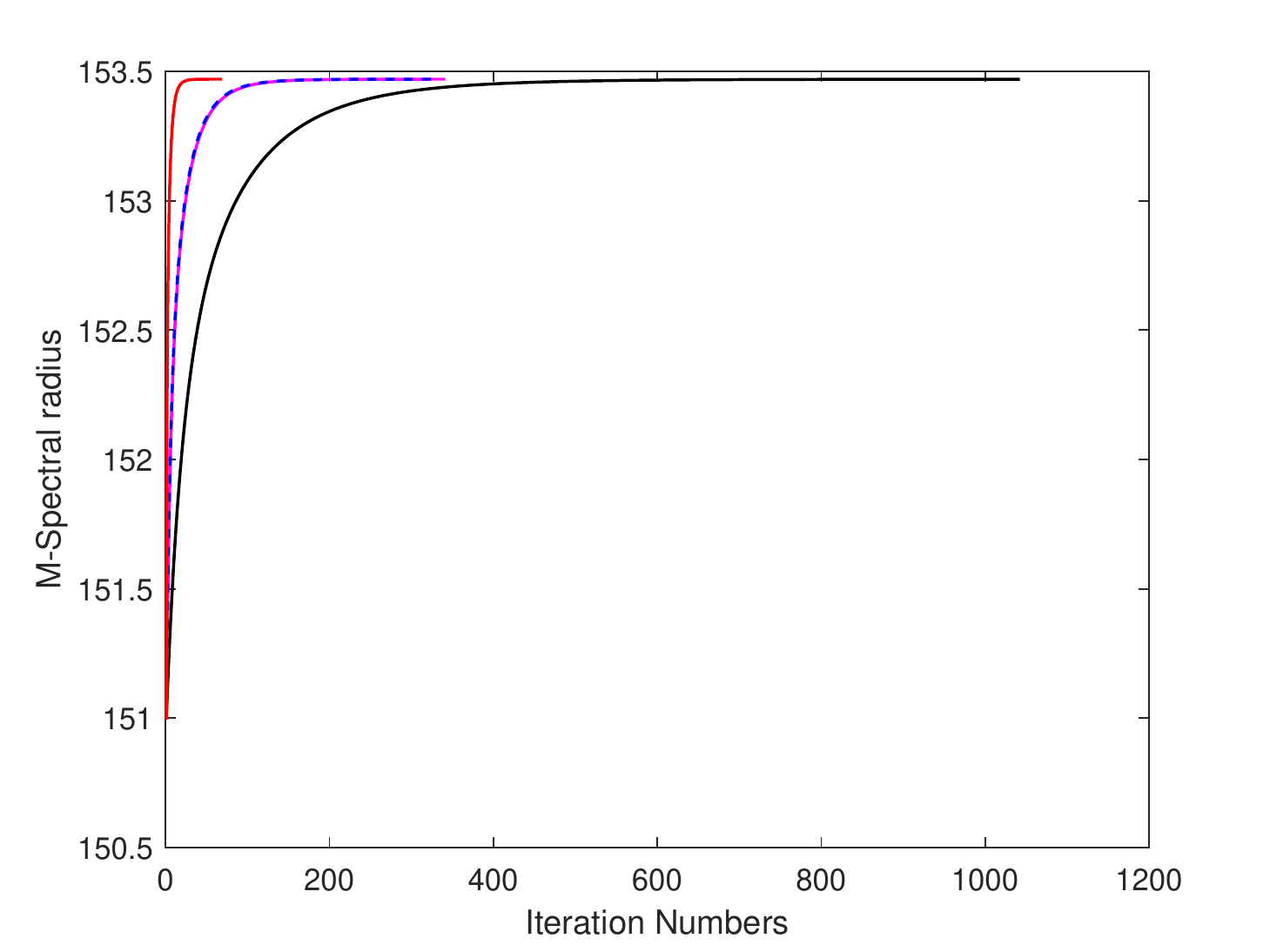} \label{6}
}
\caption{The \textbf{red curve}, \textbf{black curve}, \textbf{blue curve} and \textbf{manganese violet curve} are obtained with our upper bound in Theorem \ref{Theorem-set1} and that one $\tau$, $\tau_{1}$ and $\tau_{2}$ in \cite{wang2009practical,li2019m} as the shift parameters on BIM, separately.
The left and right stopping criteria are $1e-3$ and $1e-6$, respectively. 
}
\end{figure}
\begin{figure}[htbp]\label{figure3}
\centering
\includegraphics[width=2.3in]{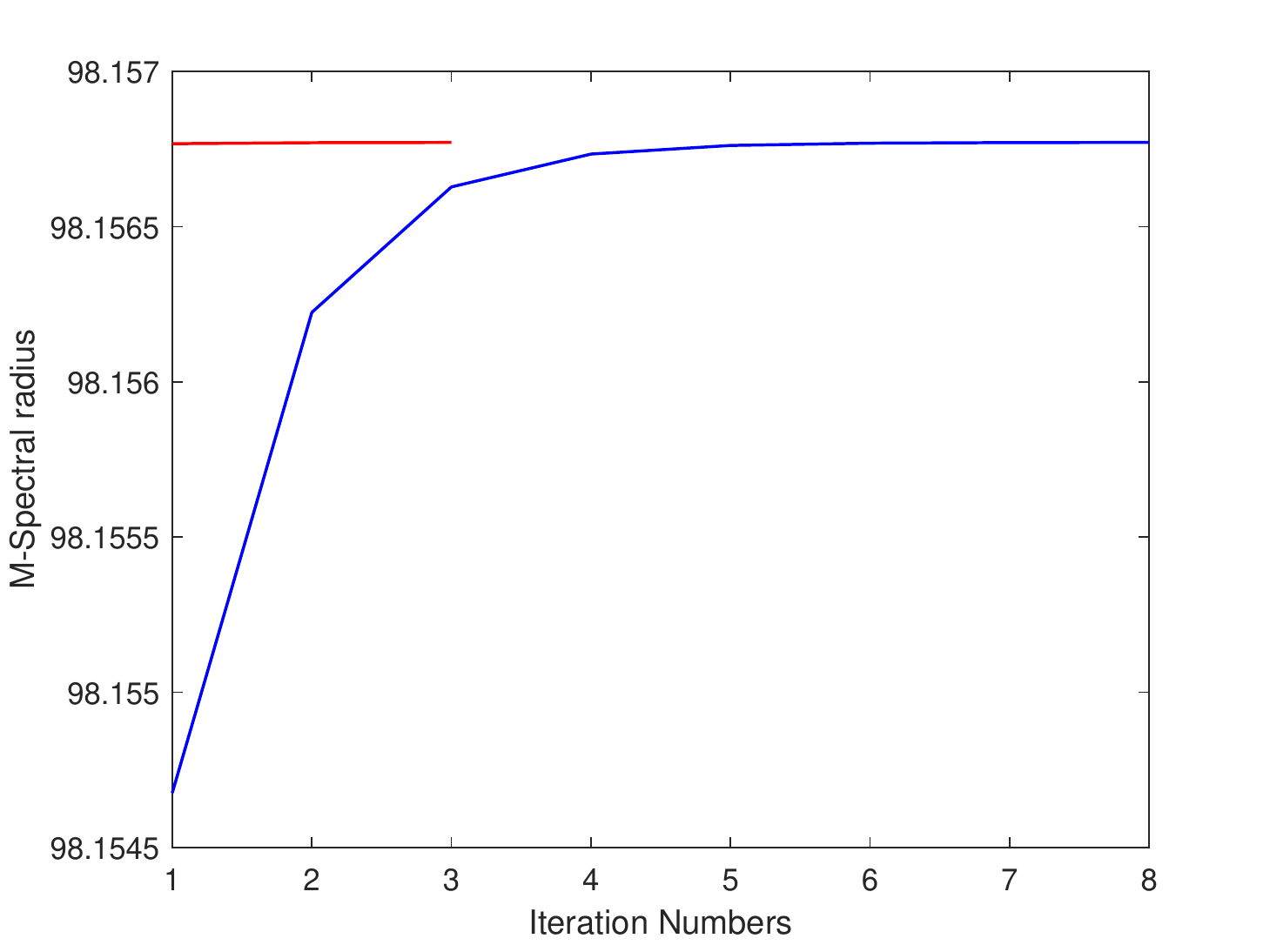}
\includegraphics[width=2.3in]{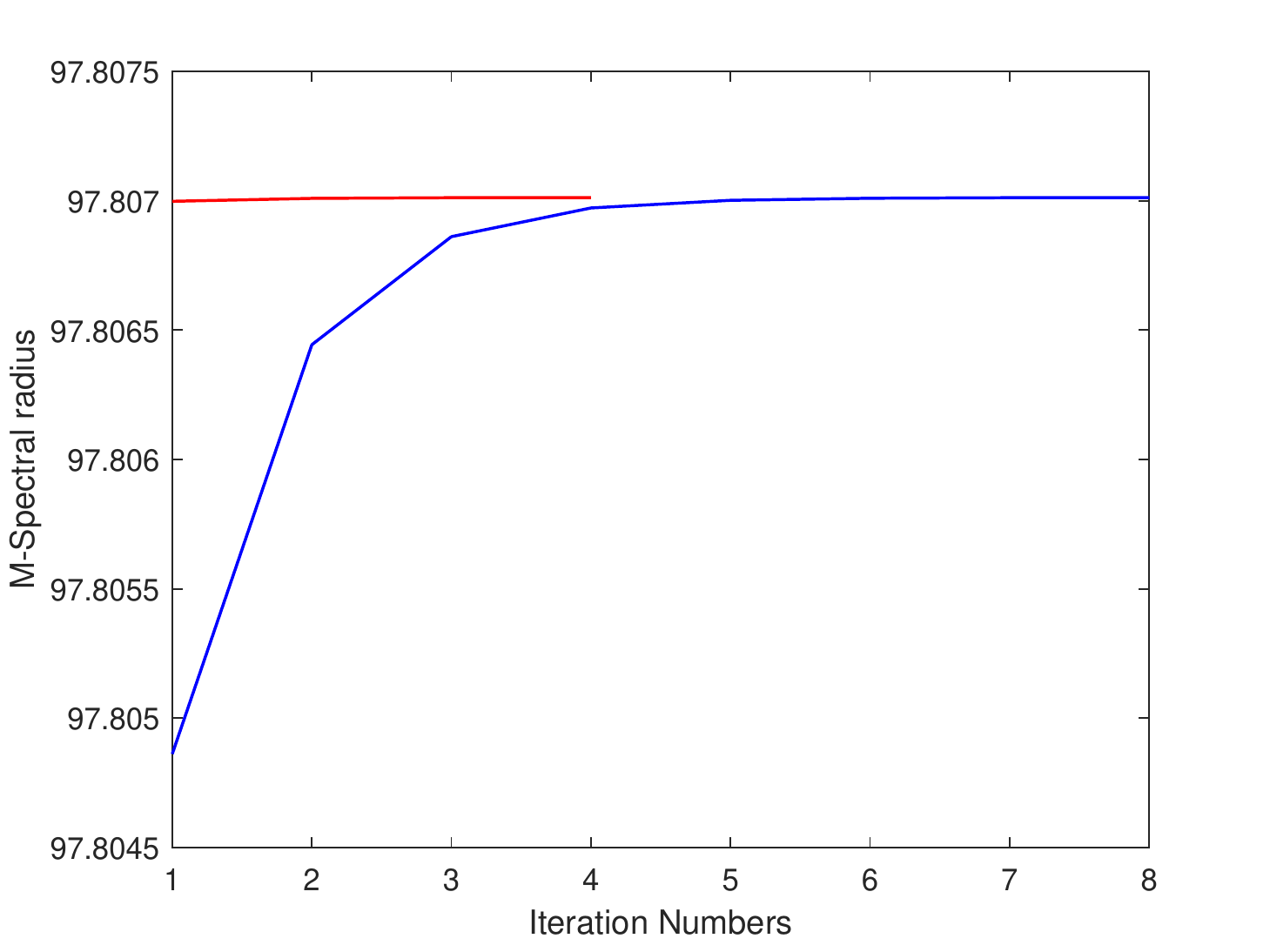}
\caption{A comparison of the different initialization vectors used for BIM under the new shift parameter. The blue curve is obtained by using the original initialization vectors of BIM, and the red curve is obtained by using $\xx^{\star}$ and $\yy^{\star}$ as the initialization vectors of BIM.
}
\end{figure}

Next, we run BIM using different upper bounds as its shift parameter under the same fourth-order PS-tensor.
The upper bound in Theorem \ref{Theorem-set1} and ones $\tau$, $\tau_{1}$ and $\tau_{2}$ of \cite{wang2009practical,li2019m} are used as the shift parameter of BIM, respectively, some numerical results are shown in Figure \ref{fig:2}. In Figure \ref{fig:2}, the entries of those tensors with size $m=n=10$ in the first three rows are given as follows: $
\mathcal A(i,p,i,q)=|\tan(1/(p+q-\sin(i)-1))|,~\mathcal A(j,p,i,q)=\mathcal A(i,p,j,q)=|\tan(1/(h(\cos(p+q)-\sin(i)-\cos(i)-\cos(j))))|$, where $h=1,3,4$;
and the entries of the tensor with size $m=n=10$ in the last row are as follows: $\mathcal A(i,p,i,q)=1/|\sin(p+q+i)|,~\mathcal A(i,p,j,q)=|\cos(p+q+i+j)|$. Here $i,j,p,q\in[10]$ and $j\neq i$. From Figure \ref{fig:2}, the convergence rate is faster under the same stopping criterion when using the upper bound in Theorem \ref{Theorem-set1} as a shift parameter in BIM than when using $\tau$, $\tau_{1}$ and $\tau_{2}$ in \cite{wang2009practical,li2019m}, see all the red curves in four subgraphs. 
Also the new shift parameter enable BIM to maintain fast convergence when the convergence accuracy is increased from $1e-3$ to $1e-6$.
This is thanks to Theorem \ref{Theorem-set1} providing a rigorous bound estimation as a shift parameter of BIM.

In fact, BIM is able to converge very quickly mainly thanks to its \textbf{Initial step}, both the shift parameter and the initialisation vectors play a critical role. Figures \ref{figure1}, \ref{figure2} and \ref{fig:2} show that we provide a better shift parameter to BIM, this is made possible by the rigorous bound estimation on M-spectral radius in Theorem \ref{Theorem-set1}. Moreover, although the initialisation vectors provided by Wang et al. \cite{wang2009practical} are already outstanding and in many cases are close to the optimal solution vector, $\xx^{\star}$ and $\yy^{\star}$ can still be used as a sub-optimal choice in many cases, especially when the dimensionality $m$ and $n$ of the tensor is relatively large. For example, for size $m=n=100$, the initialisation vectors requires the computation of the eigenvector of a matrix of size $10000\times10000$, whereas the computations of $\xx^{\star}$ and $\yy^{\star}$ are only related to the eigenvectors of two matrices of size $100\times100$. The numerical examples in Figure \ref{figure3} also shows that there are times when choosing $\xx^{\star}$ and $\yy^{\star}$ may be a good decision. In Figure \ref{figure3}, the entries of the two tensors with size $m=n=10$ are given at $h=\sin(i)$ and $h=\cos(i)$ by: $\mathcal A(i,p,i,q)=|\sin(p+q+i)+h|,~\mathcal A(i,p,j,q)=\sin(p+q+i+j)+1$,  where $i,j,p,q\in[10]$ and $j\neq i$.

\section{Exact upper bound on M-spectral radius for certain PS-tensors}
In this section, we extend the proposed upper bound on M-spectral radius to solve for the exact solution of the M-spectral radius/greatest M-eigenvalue and its corresponding M-eigenvectors of some tensors. Our motivation is that BIM and related methods, such as block coordinate ascent method and sequential quadratic programming method, can only produce a stationary point or a KKT point of the problem, which is not necessarily the \emph{global maximizer}. This problem was reported by Wang et al. in \cite{wang2015c}; technical details can be found in Table I and II on page 1074 of \cite{wang2015c}. It is natural to ask how we avoid this problem for fourth-order PS-tensors with comparatively large sizes. Although the study of exact M-spectral radius for general fourth-order PS-tensors is very challenging, we are able to derive exact solutions to the fourth-order PS-tensors with certain special structure.

\subsection{A class of fourth-order PS-tensors defined by the upper and lower bounds}\label{specialtensor}
In this part, we define a class of fourth-order PS-tensors whose M-spectral radius is equal to the maximum eigenvalue of the corresponding symmetric matrix. Note that the non-negative restriction can be relaxed for this class of PS-tensors.  

Before proceeding, we obtain the following corollary by Theorems \ref{Theorem-set1} and \ref{Theorem-set2} based on the analysis of the M-spectral radius.
\begin{corollary}\label{corollary-1}
Let $\mathcal B=(b_{ijkl})$ be a nonnegative fourth-order PS-tensor.
Then
\begin{align*}
 \beta_{\max}\bigg( \frac{1}{n}\sum_{l=1}^{n}C_{l}\bigg)\leq\rho_{M}(\mathcal B)\leq\max\limits_{l\in[n]}\bigg\{\beta_{\max}\big(C_{l}\big)\bigg\},\\
  ~\text{or}~
  \beta_{\max}\bigg( \frac{1}{m}\sum_{i=1}^{m}D_{i}\bigg)\leq\rho_{M}(\mathcal B)\leq\max\limits_{i\in[m]}\bigg\{\beta_{\max}\big(D_{i}\big)\bigg\},
\end{align*}
where the ES matrix $C_{l}$ or $D_{i}$ is defined in (\ref{cd}).
\end{corollary}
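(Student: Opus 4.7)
The plan is to observe that this corollary is essentially a bookkeeping consequence of the two preceding theorems: I would simply combine the upper bound from Theorem \ref{Theorem-set1} with the lower bound from Theorem \ref{Theorem-set2}, grouped according to which family of ES matrices (the $C_l$'s or the $D_i$'s) is being used. No new machinery should be required; the effort is purely in recognizing that the upper and lower bounds naturally pair up through the definitions in \eqref{cd} and \eqref{CD}.

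For the upper inequality, I would invoke Theorem \ref{Theorem-set1} directly, which asserts $\rho_M(\mathcal B) \le R_1(\mathcal B) = \max_{l\in[n]}\{\beta_{\max}(C_l)\}$, and symmetrically $\rho_M(\mathcal B) \le R_2(\mathcal B) = \max_{i\in[m]}\{\beta_{\max}(D_i)\}$. For the lower inequality, I would invoke Theorem \ref{Theorem-set2}, which gives $\rho_M(\mathcal B) \ge \beta_{\max}(\overline{C})$ and $\rho_M(\mathcal B) \ge \beta_{\max}(\overline{D})$. Unfolding the definitions $\overline{C} = \tfrac{1}{n}\sum_{l=1}^n C_l$ and $\overline{D} = \tfrac{1}{m}\sum_{i=1}^m D_i$ from \eqref{CD} is all that remains to match the stated form.

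Combining these two ingredients termwise yields the two chained inequalities in the corollary, one expressed through $\{C_l\}_{l=1}^n$ and the other through $\{D_i\}_{i=1}^m$, with the symbol ``or'' reflecting that the proofs of the two strands are mirror images of one another under the left/right symmetry of M-eigenvectors. Because both parent results have already been established, there is no genuine obstacle here; the only thing worth double-checking is that the indexing in \eqref{cd}--\eqref{CD} is used consistently so that $\beta_{\max}(\tfrac{1}{n}\sum_l C_l)$ really equals $\beta_{\max}(\overline C)$ and analogously for $\overline D$.

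The reason for stating this corollary separately, and the reason I would not try to strengthen it at this stage, is that it sets up a trapping inequality in which the lower and upper bounds are built from the same family of symmetric matrices. The payoff is in Section \ref{specialtensor}: whenever one can arrange that $\beta_{\max}(\overline C)$ meets $\max_l \beta_{\max}(C_l)$ (for instance, when all $C_l$ coincide), the trap collapses and $\rho_M(\mathcal B)$ is pinned down exactly, which is precisely how I expect the authors to produce exact M-spectral radii for the structured PS-tensor classes that follow.
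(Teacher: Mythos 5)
Your proposal is correct and coincides with the paper's own proof: the corollary is obtained by juxtaposing the upper bound of Theorem~\ref{Theorem-set1} with the lower bound of Theorem~\ref{Theorem-set2} and unfolding the definitions of $\overline{C}$ and $\overline{D}$ from~\eqref{CD}. No further argument is needed, and your reading of the corollary as a trapping inequality anticipating Section~\ref{specialtensor} matches the authors' intent.
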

\begin{proof}
Review the MES matrix $\overline{C}$ or $\overline{D}$ is defined in (\ref{CD}). According to Theorems \ref{Theorem-set1} and \ref{Theorem-set2}, the conclusion follows.
\end{proof}

Interestingly enough is that we get the following theorem by Corollary \ref{corollary-1}.
\begin{theorem}\label{Theorem-set3}
Let $\mathcal B=(b_{ijkl})$ be a nonnegative fourth-order PS-tensor. Then
\begin{align*}
\rho_{M}(\mathcal B)=\beta_{\max}(\overline{C})
\end{align*}
with the corresponding right M-eigenvector $\yy^{\ast}=\big(\frac{1}{\sqrt{n}},\frac{1}{\sqrt{n}},\cdots,\frac{1}{\sqrt{n}}\big)^{\top}$
and the corresponding left M-eigenvector $\xx^{\ast}$ is equal to the eigenvector corresponding to the maximum eigenvalue of MES matrix $\overline{C}$,
if $$
\beta_{\max}(\overline{C})=\max\limits_{l\in[n]}\bigg\{\beta_{\max}\big(C_{l}\big)\bigg\}~\text{or}~
\beta_{\max}(\overline{C})=\max\limits_{i\in[m]}\bigg\{\beta_{\max}\big(D_{i}\big)\bigg\},$$
where ES matrix $C_{l}$ or $D_{i}$ is defined in (\ref{cd}), MES matrix $\overline{C}$ is defined in (\ref{CD}).
\end{theorem}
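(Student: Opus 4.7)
The plan is to observe that the hypothesis forces a sandwich collapse, and then to exhibit an explicit maximizing pair that must, by KKT, be a pair of M-eigenvectors.

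First, I would invoke Corollary \ref{corollary-1}, which gives the two chains
\begin{equation*}
\beta_{\max}(\overline{C}) \le \rho_M(\mathcal B) \le \max_{l\in[n]} \beta_{\max}(C_l), \qquad
\beta_{\max}(\overline{D}) \le \rho_M(\mathcal B) \le \max_{i\in[m]} \beta_{\max}(D_i).
\end{equation*}
Under the first hypothesis $\beta_{\max}(\overline{C})=\max_{l}\beta_{\max}(C_l)$, the first chain immediately pinches to $\rho_M(\mathcal B)=\beta_{\max}(\overline{C})$. Under the second hypothesis $\beta_{\max}(\overline{C})=\max_{i}\beta_{\max}(D_i)$, I would combine the lower bound from the first chain with the upper bound from the second chain to get $\beta_{\max}(\overline C)\le \rho_M(\mathcal B)\le \max_i\beta_{\max}(D_i)=\beta_{\max}(\overline C)$, again pinching to the same equality. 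So in both cases the value is identified.

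Second, I would recover the eigenvectors by re-reading the constructive inequality in the proof of Theorem \ref{Theorem-set2}. With $\yy^{\ast}=\frac{1}{\sqrt{n}}(1,\ldots,1)^{\top}$ and $\xx^{\ast}$ chosen to be the unit eigenvector of $\overline{C}$ associated with $\beta_{\max}(\overline{C})$, a direct substitution yields
\begin{equation*}
f_{\mathcal B}(\xx^{\ast},\yy^{\ast}) \;=\; \frac{1}{n}\sum_{j,l=1}^{n}(\xx^{\ast})^{\top}C_{jl}\xx^{\ast} \;=\; (\xx^{\ast})^{\top}\overline{C}\,\xx^{\ast} \;=\; \beta_{\max}(\overline{C}).
\end{equation*}
By Lemma \ref{dingth6} the global maximum of $f_{\mathcal B}$ over the product of unit spheres equals $\rho_M(\mathcal B)$, which by Step~1 equals $\beta_{\max}(\overline{C})$. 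Hence $(\xx^{\ast},\yy^{\ast})$ is a global maximizer of (\ref{optimalcondition}).

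Third, I would close the argument using the equivalence between (\ref{optimalcondition}) and the M-eigenvalue system (\ref{Meigenvalues}) already stated in the introduction: the first-order KKT conditions of the constrained maximization are precisely the M-eigenvalue equations, with the Lagrange multiplier equal to the objective value. Consequently $(\xx^{\ast},\yy^{\ast})$ is a pair of left/right M-eigenvectors associated with the M-eigenvalue $\beta_{\max}(\overline{C})=\rho_M(\mathcal B)$, which is exactly the conclusion of the theorem. The main subtle point — and the only one requiring care — is correctly handling the second hypothesis, where the pinch relies on mixing the lower bound from the $\overline C$ chain with the upper bound from the $D_i$ chain rather than using a single chain; beyond that, no new calculation beyond Theorems \ref{Theorem-set1}, \ref{Theorem-set2} and Lemma \ref{dingth6} is needed.
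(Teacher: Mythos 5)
Your proof is correct and follows essentially the same route as the paper: the sandwich collapse via Corollary \ref{corollary-1} (and, for the second hypothesis, combining the lower bound from Theorem \ref{Theorem-set2} with the upper bound $R_2$ from Theorem \ref{Theorem-set1}), then identifying the eigenvectors from the equality case in the chain (\ref{ineq-lambda}). The paper compresses both steps into a single ``obviously,'' whereas you spell out the mixing of bounds and the KKT/first-order-optimality argument, but the underlying idea is identical.
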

\begin{proof}
According to Corollary \ref{corollary-1} and the definition of MES matrix $\overline{C}$ in (\ref{CD}), obviously, $\rho_{M}(\mathcal B)=\beta_{\max}(\overline{C})$. 
Since $\rho_{M}(\mathcal B)=\beta_{\max}(\overline{C})$, by inequality (\ref{ineq-lambda}),
we have
$\yy^{\ast}=\big(\frac{1}{\sqrt{n}},\frac{1}{\sqrt{n}},\cdots,\frac{1}{\sqrt{n}}\big)^{\top}$ is the right M-eigenvector of $\rho_{M}(\mathcal B)$, and
the eigenvector corresponding to the maximum eigenvalue of $\overline{C}$ is the left M-eigenvector of $\rho_{M}(\mathcal B)$. The proof is completed.
\end{proof}

Similar to the proof in Theorem \ref{Theorem-set3}, we have the following theorem.
\begin{theorem}\label{Theorem-set3-2}
Let $\mathcal B=(b_{ijkl})$ be a nonnegative fourth-order PS-tensor. Then
\begin{align*}
\rho_{M}(\mathcal B)=\beta_{\max}(\overline{D})
\end{align*}
with the corresponding left M-eigenvector $\xx^{\ast}=\big(\frac{1}{\sqrt{m}},\frac{1}{\sqrt{m}},\cdots,\frac{1}{\sqrt{m}}\big)^{\top}$ and the corresponding right M-eigenvector $\yy^{\ast}$ is equal to the eigenvector corresponding to the maximum eigenvalue of MES matrix $\overline{D}$,
if
$$
\beta_{\max}(\overline{D})=\max\limits_{i\in[m]}\bigg\{\beta_{\max}\big(D_{i}\big)\bigg\}~\text{or}~   \beta_{\max}(\overline{D})=\max\limits_{l\in[n]}\bigg\{\beta_{\max}\big(C_{l}\big)\bigg\},$$
where ES matrix $C_{l}$ or $D_{i}$ is defined in (\ref{cd}), MES matrix $\overline{D}$ is defined in (\ref{CD}).
\end{theorem}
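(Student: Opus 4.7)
The plan is to mirror the proof of Theorem \ref{Theorem-set3} by interchanging the roles of $\xx$ and $\yy$ throughout, relying on the full symmetry between the $C_l$/$\overline{C}$ statements and the $D_i$/$\overline{D}$ statements built into Corollary \ref{corollary-1} and the variational inequality (\ref{ineq-lambda}). First I would combine the symmetric half of Corollary \ref{corollary-1} with Theorem \ref{Theorem-set1} to obtain the two-sided sandwich
\begin{equation*}
\beta_{\max}(\overline{D}) \;\le\; \rho_{M}(\mathcal B) \;\le\; \min\Bigl\{\max_{i\in[m]}\beta_{\max}(D_{i}),\; \max_{l\in[n]}\beta_{\max}(C_{l})\Bigr\}.
\end{equation*}
Either version of the hypothesis forces the lower bound to coincide with one of the two upper bounds, which collapses the sandwich and yields $\rho_{M}(\mathcal B)=\beta_{\max}(\overline{D})$ immediately.

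Next I would identify the M-eigenvectors by re-running the calculation that produced (\ref{ineq-lambda}), but now freezing $\xx$ at the uniform vector rather than $\yy$. Specifically, restricting the variational problem to $\xx=(\tfrac{1}{\sqrt{m}},\ldots,\tfrac{1}{\sqrt{m}})^{\top}$ reduces $\mathcal B(\xx,\yy,\xx,\yy)$ to $\tfrac{1}{m}\yy^{\top}\overline{D}\yy$, so its maximum over unit $\yy$ is exactly $\beta_{\max}(\overline{D})$, attained at a top eigenvector of $\overline{D}$. Since the first step has already shown $\lambda^{\ast}=\rho_{M}(\mathcal B)=\beta_{\max}(\overline{D})$, the restricted maximum equals the unrestricted one, so the pair consisting of the uniform $\xx$ and this top eigenvector of $\overline{D}$ achieves the global optimum of the bi-quadratic problem (\ref{optimalcondition}).

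Finally, I would invoke the equivalence, already recorded after Definition \ref{defMeigenvalues}, between global maximizers of (\ref{optimalcondition}) and M-eigenvector pairs for the greatest M-eigenvalue: the KKT conditions for the constrained optimization of $\mathcal B(\xx,\yy,\xx,\yy)$ on the product of unit spheres are precisely the system (\ref{Meigenvalues}). Thus the identified optimal pair is a genuine M-eigenvector pair for $\rho_{M}(\mathcal B)$, giving $\xx^{\ast}=(\tfrac{1}{\sqrt{m}},\ldots,\tfrac{1}{\sqrt{m}})^{\top}$ and $\yy^{\ast}$ equal to the top eigenvector of $\overline{D}$, as claimed.

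The main obstacle is minor and purely bookkeeping: verifying that the squeeze works uniformly across both alternative hypotheses (one controlled by $\max_{i}\beta_{\max}(D_{i})$, the other by $\max_{l}\beta_{\max}(C_{l})$). Both alternatives reduce to the same squeeze once one notes that Theorem \ref{Theorem-set1} supplies both upper bounds simultaneously, so no separate argument is needed. Everything else is a mechanical transposition of the proof of Theorem \ref{Theorem-set3}.
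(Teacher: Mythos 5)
Your proposal is correct and follows essentially the same route as the paper: the paper proves Theorem \ref{Theorem-set3} by squeezing via Corollary \ref{corollary-1} and then reading off the eigenvectors from the chain of inequalities in \eqref{ineq-lambda}, and simply states Theorem \ref{Theorem-set3-2} as the symmetric transposition, which is exactly what you carry out. One small slip: after fixing $\xx=(\tfrac{1}{\sqrt{m}},\dots,\tfrac{1}{\sqrt{m}})^{\top}$ you get $\mathcal B(\xx,\yy,\xx,\yy)=\yy^{\top}\overline{D}\yy$ (not $\tfrac{1}{m}\yy^{\top}\overline{D}\yy$), since the averaging factor $\tfrac{1}{m}$ is already absorbed into the definition $\overline{D}=\tfrac{1}{m}\sum_{i}D_{i}$; your stated maximum $\beta_{\max}(\overline{D})$ is nevertheless the correct value, so this is only a transcription error and the argument stands.
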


For convenience, we denote the set of nonnegative fourth-order PS-tensors satisfying Theorems \ref{Theorem-set3} and \ref{Theorem-set3-2} as $\Omega_1(\mathcal B)$ and $\Omega_2(\mathcal B)$, separately, that is
$$\Omega_1(\mathcal B):=\bigg\{\mathcal B:\beta_{\max}(\overline{C})=\max\limits_{l\in[n]}\big\{\beta_{\max}\big(C_{l}\big)\big\}~\text{or}~
\beta_{\max}(\overline{C})=\max\limits_{i\in[m]}\big\{\beta_{\max}\big(D_{i}\big)\big\}\bigg\},$$
$$\Omega_2(\mathcal B):=\bigg\{\mathcal B:\beta_{\max}(\overline{D})=\max\limits_{i\in[m]}\big\{\beta_{\max}\big(D_{i}\big)\big\}~\text{or}~   \beta_{\max}(\overline{D})=\max\limits_{l\in[n]}\big\{\beta_{\max}\big(C_{l}\big)\big\}\bigg\},$$
where ES matrix $C_{l}$ or $D_{i}$ is defined in (\ref{cd}), MES matrix $\overline{C}$ or $\overline{D}$ is defined in (\ref{CD}), and $\mathcal B$ is a nonnegative fourth-order PS-tensor.

Naturally, one key problem is raised herein: is there a tensor that satisfies Theorem \ref{Theorem-set3} or Theorem \ref{Theorem-set3-2}, and how large are $\Omega_{1}(\mathcal B)$ and $\Omega_{2}(\mathcal B)$? Obviously, it can be verified that $\mathcal I$ and $\mathcal O$  belong to $\Omega_1(\mathcal B)$ and $\Omega_2(\mathcal B)$.
In order to better answer this question, we give the following corollary to clarify Theorems \ref{Theorem-set3} and \ref{Theorem-set3-2}.
\begin{corollary}\label{corollary-2}
Let
$$\Delta_1(\mathcal B):=\big\{\mathcal B: C_{1}=C_2=\cdots=C_n \big\},$$
$$\Delta_2(\mathcal B):=\big\{\mathcal B: D_{1}=D_2=\cdots=D_m \big\},$$
where ES matrix $C_{l}$ or $D_{i}$ is defined in (\ref{cd}) and $\mathcal B$ is a nonnegative fourth-order PS-tensor. Then
$$\Delta_1(\mathcal B)\subset\Omega_1(\mathcal B),$$$$\Delta_2(\mathcal B)\subset\Omega_2(\mathcal B).$$
\end{corollary}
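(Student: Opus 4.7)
The plan is to establish both inclusions directly by unpacking the definitions; no machinery beyond the formulas (\ref{cd}) and (\ref{CD}) is needed. I would first take an arbitrary $\mathcal{B} \in \Delta_1(\mathcal{B})$, so by hypothesis there is a common symmetric matrix $C \in \mathbb R^{m\times m}$ with $C_1 = C_2 = \cdots = C_n = C$. Then by the definition of the MES matrix in (\ref{CD}),
\begin{equation*}
\overline{C} = \frac{1}{n}\sum_{l=1}^{n} C_l = \frac{1}{n}\cdot nC = C,
\end{equation*}
so $\beta_{\max}(\overline{C}) = \beta_{\max}(C) = \beta_{\max}(C_l)$ for every $l \in [n]$. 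In particular, the equality
\begin{equation*}
\beta_{\max}(\overline{C}) = \max_{l\in[n]}\bigl\{\beta_{\max}(C_l)\bigr\}
\end{equation*}
holds, which is precisely the first of the two alternative conditions defining $\Omega_1(\mathcal{B})$. Hence $\mathcal{B} \in \Omega_1(\mathcal{B})$, proving the first inclusion.

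For the second inclusion $\Delta_2(\mathcal{B}) \subset \Omega_2(\mathcal{B})$, I would run the symmetric argument: if $D_1 = D_2 = \cdots = D_m = D$, then $\overline{D} = D$ by (\ref{CD}), so $\beta_{\max}(\overline{D}) = \beta_{\max}(D_i)$ for every $i \in [m]$, and therefore $\beta_{\max}(\overline{D}) = \max_{i\in[m]}\bigl\{\beta_{\max}(D_i)\bigr\}$, which matches the first alternative in the definition of $\Omega_2(\mathcal{B})$.

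There is no real obstacle to this proof; the result is essentially a tautology once one notes that the arithmetic mean of identical matrices equals the common value. The only mild subtlety is that $\Omega_1$ and $\Omega_2$ are each defined by a disjunction of two conditions (a $C$-side condition and a $D$-side condition), so it is enough in each case to verify the matching side. One could optionally remark that the inclusions are in general strict, since membership in $\Omega_1$ or $\Omega_2$ only requires equality of the maxima, not of every individual ES matrix, providing a concrete illustration that $\Omega_1$ and $\Omega_2$ are genuinely larger than $\Delta_1$ and $\Delta_2$.
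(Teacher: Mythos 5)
Your argument is correct and matches the paper's proof essentially verbatim: both show that equality of the $C_l$ forces $\overline{C}=C_1$, hence $\beta_{\max}(\overline{C})=\max_{l\in[n]}\beta_{\max}(C_l)$, which is one of the disjuncts defining $\Omega_1(\mathcal B)$, and then handle $\Delta_2\subset\Omega_2$ by symmetry. The only difference is cosmetic: the paper states "we only need to prove the $C$ case" and leaves the $D$ case implicit, whereas you write it out; this changes nothing.
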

\begin{proof}
We only need to prove the case of $C_{1}=C_2=\cdots=C_n$.
Since $C_{1}=C_2=\cdots=C_n$, so $\overline{C}=\frac{1}{n}\sum_{l\in[n]}C_{l}=C_{1}$ and
$$\beta_{\max}(\overline{C})=\beta_{\max}(C_{1})=\max\limits_{l\in[n]}\bigg\{\beta_{\max}\big(C_{l}\big)\bigg\}.$$
The proof is completed.
\end{proof}

Obviously, it can be verified that $\Delta_1(\mathcal B)\neq\Delta_2(\mathcal B)$. It is easy to find that although the condition of set $\Delta_1(\mathcal B)$ or set $\Delta_2(\mathcal B)$ in Corollary \ref{corollary-2} is just a subset of set $\Omega_1(\mathcal B)$ or $\Omega_2(\mathcal B)$, 
but in practice, it is easy to construct a free-dimensional tensor satisfying $\Delta_1(\mathcal B)$ and $\Delta_2(\mathcal B)$ using Algorithm II.

\noindent\rule{12.125cm}{0.1em}

\noindent{\bf
Algorithm II: Randomly construct a tensor satisfying $\Delta_1(\mathcal B)$ }

\noindent\rule{12.125cm}{0.05em}

Given $m$, $n$; $\overline{C}=\text{zeros}(m,m)$; $T=\text{zeros}(n+1,m,m)$; T=tensor(T);

A nonnegative fourth-order PS-tensor $\mathcal B$ is randomly generated by $m$ and $n$;

for $i=1:n$

~~~~$B(:,i,:,i) ~\leftarrow~\text{zeros}(m,m)$;

~~~~for $j=1:n$

~~~~~~~~$T(i+1,:,:) = T(i+1,:,:) + \mathcal B(:,i,:,j)$;

~~~~end

~~~~$\overline{C}= \overline{C}+|T(i+1,:,:)-T(i,:,:)|$;~~~$\#$ $|\cdot|$, the element-wise absolute value;

end

$\mathcal B(:,i,:,i)~\leftarrow~\overline{C}- T(i+1,:,:)$; $i=1,2,\cdots,n$.

\noindent\rule{12.125cm}{0.05em}\\

Next, note again that $C_{l}=\sum_{j=1}^{n}C_{jl},~ D_{i}=\sum_{k=1}^{m}D_{ik}$ for nonnegative fourth-order PS-tensor $\mathcal B$. Thus,
$$(C_{l})_{st}=\sum_{j=1}^{n}b_{sjtl}~~\text{and}~(D_{i})_{uv}=\sum_{k=1}^{m}b_{iukv}~~\text{for}~\forall l\in[n],i\in[m].$$
Then, the subsets $\Delta_1(\mathcal B)$ and $\Delta_2(\mathcal B)$ have the following property.
\begin{theorem}\label{Theorem-set4}
The subsets $\Delta_1(\mathcal B)$ and $\Delta_2(\mathcal B)$ separately form two monoid semigroups with addition (element-wise).
\end{theorem}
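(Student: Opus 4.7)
The plan is to mimic the structure used in the proof of Theorem \ref{largest-distribution-group}, since the monoid semigroup structure to be established on $\Delta_1(\mathcal B)$ and $\Delta_2(\mathcal B)$ is of exactly the same flavor: a subset of nonnegative fourth-order PS-tensors defined by a linear condition on the ES matrices, equipped with element-wise addition. By symmetry it suffices to treat $\Delta_1(\mathcal B)$.

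First I would verify that the zero tensor $\mathcal O$ lies in $\Delta_1(\mathcal B)$ and serves as the identity element: for $\mathcal O$, every entry vanishes, so each $C_l^{\mathcal O}$ is the zero matrix and the defining equality $C_1^{\mathcal O}=\cdots=C_n^{\mathcal O}$ holds trivially. Element-wise addition with $\mathcal O$ obviously leaves any tensor unchanged.

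Next, for closedness under addition, take $\mathcal X,\mathcal Y\in\Delta_1(\mathcal B)$ and set $\mathcal Z=\mathcal X+\mathcal Y$. Since nonnegative entries and the partial symmetry $a_{ijkl}=a_{kjil}=a_{ilkj}$ are preserved by sums, $\mathcal Z$ is still a nonnegative fourth-order PS-tensor. The key observation is that the map $\mathcal B\mapsto C_l^{\mathcal B}$ is linear: from $(C_l)_{st}=\sum_{j=1}^n b_{sjtl}$ one reads off $C_l^{\mathcal Z}=C_l^{\mathcal X}+C_l^{\mathcal Y}$ for every $l\in[n]$. Combining this with $C_1^{\mathcal X}=\cdots=C_n^{\mathcal X}$ and $C_1^{\mathcal Y}=\cdots=C_n^{\mathcal Y}$ gives $C_l^{\mathcal Z}=C_1^{\mathcal X}+C_1^{\mathcal Y}=C_1^{\mathcal Z}$ for all $l\in[n]$, hence $\mathcal Z\in\Delta_1(\mathcal B)$.

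Finally, associativity is inherited for free: element-wise addition on $\mathbb R^{m\times n\times m\times n}$ is associative, so the restriction to $\Delta_1(\mathcal B)$ is associative as well. The argument for $\Delta_2(\mathcal B)$ is identical after interchanging the roles of $C_l$ and $D_i$, using $(D_i)_{uv}=\sum_{k=1}^m b_{iukv}$ and the linearity $D_i^{\mathcal X+\mathcal Y}=D_i^{\mathcal X}+D_i^{\mathcal Y}$. I do not foresee a significant obstacle; the only subtle point is to emphasize that $\Delta_1(\mathcal B)$ is not a group, exactly as noted after Theorem \ref{largest-distribution-group}, because subtraction would in general break the nonnegativity constraint on the entries of $\mathcal B$.
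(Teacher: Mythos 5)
Your proposal is correct and follows essentially the same route as the paper's proof: identity element $\mathcal O$, closedness via the linearity of the map $\mathcal B\mapsto C_l^{\mathcal B}$ (read off from $(C_l)_{st}=\sum_{j}b_{sjtl}$), and associativity inherited from element-wise addition on $\mathbb R^{m\times n\times m\times n}$. The only cosmetic difference is that the paper writes out the entry-level computation $(C^{\mathcal Z}_{l})_{st}=\sum_j(x_{sjtl}+y_{sjtl})=(C^{\mathcal X}_{l})_{st}+(C^{\mathcal Y}_{l})_{st}$ explicitly rather than invoking linearity by name.
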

\begin{proof}
It suffices to prove the case of $C_{1}=C_2=\cdots=C_n$.
Clearly, subset $\Delta_{1}(\mathcal B)$ contains zero tensor $\mathcal O$ as its identity element. Next we prove the closedness of addition operation. 
For $\forall \mathcal X=(x_{ijkl})\in\Delta_{1}(\mathcal B)$, $\forall \mathcal Y=(y_{ijkl})\in\Delta_{1}(\mathcal B)$, we let $\mathcal Z=(z_{ijkl})=\mathcal X+\mathcal Y$. For convenience, we use $C^{\mathcal X}_{l}$, $C^{\mathcal Y}_{l}$ and $C^{\mathcal Z}_{l}$ to denote nonnegative fourth-order PS-tensors $\mathcal X$, $\mathcal Y$ and $\mathcal Z$ corresponding to the ES matrix $C_{l}$ in (\ref{cd}).  For $\forall l\in[n]$, obviously,
\begin{align*}
\begin{aligned}
(C^{\mathcal Z}_{l})_{st}=\sum_{j=1}^{n}z_{sjtl}=\sum_{j=1}^{n}(x_{sjtl}+y_{sjtl})=\sum_{j=1}^{n}x_{sjtl}+\sum_{j=1}^{n}y_{sjtl}
=(C^{\mathcal X}_{l})_{st}+(C^{\mathcal Y}_{l})_{st},
\end{aligned}
\end{align*}
then $C^{\mathcal Z}_{l}=C^{\mathcal X}_{l}+C^{\mathcal Y}_{l}.$
Since $\mathcal X,~\mathcal Y\in\Delta(\mathcal B)$, thus
$C^{\mathcal Z}_{1}=C^{\mathcal Z}_{2}=\cdots=C^{\mathcal Z}_{n},$
that is,
$$\mathcal Z\in\Delta_{1}(\mathcal B).$$
Finally, for $\forall \mathcal X\in\Delta_{1}(\mathcal B)$, $\forall \mathcal Y\in\Delta_{1}(\mathcal B)$ and $\forall \mathcal Z\in\Delta_{1}(\mathcal B)$,
$$(x_{ijkl}+y_{ijkl})+z_{ijkl}=x_{ijkl}+y_{ijkl}+z_{ijkl}=x_{ijkl}+(y_{ijkl}+z_{ijkl}),$$
i.e,
$$(\mathcal X+\mathcal Y)+\mathcal Z=\mathcal X+(\mathcal Y+\mathcal Z).$$
So, $\Delta_{1}(\mathcal B)$ with the associative law for addition operation.
The proof is completed.
\end{proof}

From Corollary \ref{corollary-2} and Theorem \ref{Theorem-set4}, we know that subsets $\Delta_1(\mathcal B)$ and $\Delta_2(\mathcal B)$ always exist, so sets $\Omega_1( \mathcal B)$ and $\Omega_2( \mathcal B)$ are not only non-empty, but also in any dimension contain an infinite number of elements.

\textbf{Example 4.1}\label{3-1}~Algorithm II can generate a such tensor of arbitrary dimensionality, without losing the generality, we here consider a simple nonnegative fourth-order PS-tensor $\mathcal B_{4}=(b_{ijkl})\in\mathbb R^{2\times 2\times 2\times 2}$:
$$\mathcal B_4^{(:,:,1,1)}=\begin{bmatrix}
  	     a &~~~    b\\
	     x &~~~    y
\end{bmatrix},~
\mathcal B_4^{(:,:,2,1)}=\begin{bmatrix}
    	 x  &~~~   y\\
	     a  &~~~   b
\end{bmatrix},
$$
$$
\mathcal B_4^{(:,:,1,2)}=\begin{bmatrix}
        b&~~~     c\\
	    y&~~~     z
\end{bmatrix},~
\mathcal B_4^{(:,:,2,2)}=\begin{bmatrix}
 	     y&~~~    z\\
	     b&~~~    c
\end{bmatrix}.
$$
By (\ref{cd}) and (\ref{CD}), we have
 $$D_{12}=D_{21}=\begin{bmatrix}
    	 x  &~~~   y\\
	     y  &~~~   z
\end{bmatrix},~
 D_{11}=D_{22}=\begin{bmatrix}
  	     a &~~~    b\\
	     b &~~~    c
\end{bmatrix},~
 \overline{D}=\begin{bmatrix}
    	 a+x  &~~~   b+y\\
	     b+y  &~~~   c+z
\end{bmatrix}.
$$
Obviously, $D_{11}+D_{12}= D_{21}+D_{22}$, i.e., $\overline{D}=D_{1}= D_{2}$, then by Corollary \ref{corollary-2}, $$\mathcal B_{4}\in\Delta_2(\mathcal B)~~\text{and}~~B_{4}\in\Omega_2(\mathcal B).$$

Note that $a,~b,~c$ and $x$, $y$, $z$ can be any nonnegative real numbers, i.e, the $\mathcal B_{4}$ has six degrees of freedom.
Particularly, when $a=4$, $b=1$, $c=1$, $x=4$, $y=3$ and $z=1$, it is easy to compute, $\beta_{\max}(\overline{D})=10$, and the eigenvector corresponding to the maximum eigenvalue of $\overline{D}$ is $\big(\frac{2\sqrt{5}}{5},\frac{\sqrt{5}}{5}\big)^{\top}$. So, by Theorem \ref{Theorem-set3-2}, we have
$$\rho_{M}(\mathcal B_{4})=10,~\xx^{\ast}=\bigg(\frac{\sqrt{2}}{2},\frac{\sqrt{2}}{2}\bigg)^{\top},~\yy^{\ast}=\bigg(\frac{2\sqrt{5}}{5},\frac{\sqrt{5}}{5}\bigg)^{\top}.$$
Meanwhile, by BIM,
$$\rho_{M}(\mathcal B_{4})=\lambda^{\ast}=10,~\xx^{\ast}=\big(0.7071,~0.7071\big)^{\top},~\yy^{\ast}=\big(0.8944,~0.4472\big)^{\top},$$
which is consistent with the result of Theorem \ref{Theorem-set3-2}.


Finally, we simply extend the conclusion of Theorem \ref{Theorem-set3} to the fourth-order PS-tensor beyond non-negative classes. Also symmetrically to Theorem \ref{Theorem-set3-2}.
\begin{corollary}\label{corollary-3}
Let $\mathcal B\in\Omega_1( \mathcal B)$, $\mathcal A = \mathcal B-\eta \mathcal I$. 
Then $\lambda^{\ast}=\beta_{\max}(\overline{C})-\eta$, $\xx^{\star}$ and $\yy=(\frac{1}{\sqrt{n}},\cdots,\frac{1}{\sqrt{n}})^{\top}$ are the greatest M-eigenvalue and its corresponding M-eigenvectors of tensor $\mathcal A$.
\end{corollary}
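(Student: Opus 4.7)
The plan is to reduce the corollary to Theorem \ref{Theorem-set3} via the observation that a shift by $\eta\mathcal I$ simply translates every M-eigenvalue by $-\eta$ while preserving the M-eigenvectors. First I would verify how the identity tensor $\mathcal I$ acts on a pair of unit vectors. Using the definition $\delta_{ijkl}=1$ iff $i=k$ and $j=l$, a short index calculation gives
\begin{equation*}
\bigl(\mathcal I(\cdot,\yy,\xx,\yy)\bigr)_i = \sum_{k=1}^{m}\sum_{j,l=1}^{n}\delta_{ijkl}\,y_j x_k y_l = x_i\sum_{j=1}^{n} y_j^2 = x_i,
\end{equation*}
whenever $\yy^{\top}\yy=1$, and symmetrically $\mathcal I(\xx,\yy,\xx,\cdot)=\yy$ whenever $\xx^{\top}\xx=1$.

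Next, from the linearity of the tensor action in each slot, I would read off that $(\lambda,\xx,\yy)$ is an M-eigenpair of $\mathcal B$ (with unit $\xx$, $\yy$) if and only if $(\lambda-\eta,\xx,\yy)$ is an M-eigenpair of $\mathcal A=\mathcal B-\eta\mathcal I$: subtracting $\eta$ times the identity tensor from the two equations in \eqref{Meigenvalues} shifts the eigenvalue by $-\eta$ and leaves the eigenvectors intact. In particular, the map $\lambda\mapsto\lambda-\eta$ is a bijection between the M-spectra of $\mathcal B$ and $\mathcal A$, so it preserves the ordering, and the greatest M-eigenvalue of $\mathcal A$ equals the greatest M-eigenvalue of $\mathcal B$ minus $\eta$.

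Finally I invoke Theorem \ref{Theorem-set3}: since $\mathcal B\in\Omega_1(\mathcal B)$, the greatest M-eigenvalue of $\mathcal B$ equals $\beta_{\max}(\overline{C})$ and is attained at the M-eigenvectors $\xx^{\star}$ (the eigenvector of $\overline{C}$ for its maximum eigenvalue) and $\yy=(1/\sqrt{n},\ldots,1/\sqrt{n})^{\top}$. Combining this with the shift bijection yields $\lambda^{\ast}=\beta_{\max}(\overline{C})-\eta$, with the same M-eigenvectors, as claimed.

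The only delicate point is that $\mathcal A$ need not be nonnegative, so Lemma \ref{dingth6} (Perron--Frobenius) is not available for $\mathcal A$ itself. This is exactly why I route the argument through $\mathcal B$: the Perron--Frobenius guarantee (and hence Theorem \ref{Theorem-set3}) applies to $\mathcal B$, and the order-preserving bijection between the two M-spectra lets the conclusion transfer to $\mathcal A$ without needing nonnegativity of $\mathcal A$.
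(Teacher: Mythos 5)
Your proof is correct and rests on the same key observation as the paper's: on the unit spheres, the identity tensor $\mathcal I$ acts trivially, so subtracting $\eta\mathcal I$ translates the M-spectrum by $-\eta$ while leaving the M-eigenvectors unchanged, and one then reduces to Theorem~\ref{Theorem-set3}. The paper phrases the shift at the level of the objective $f_{\mathcal A}=f_{\mathcal B}-\eta$ on the constraint set, whereas you verify it directly in the M-eigenvalue equations (showing $\mathcal I(\cdot,\yy,\xx,\yy)=\xx$ and $\mathcal I(\xx,\yy,\xx,\cdot)=\yy$ for unit $\xx,\yy$), which makes the preservation of the eigenvectors slightly more explicit but is the same argument in substance.
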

\begin{proof}
For $\xx^\top\xx=1,\,\yy^\top \yy = 1$, obviously, $I(\xx,\yy,\xx,\yy)=1$.
Then
$$\max_{\xx,\yy\atop \xx^\top\xx=1,\,\yy^\top \yy = 1}f_{\mathcal A}(\xx,\yy)=\max_{\xx,\yy\atop \xx^\top\xx=1,\,\yy^\top \yy = 1}\mathcal A(\xx,\yy,\xx,\yy)=\max_{\xx,\yy\atop \xx^\top\xx=1,\,\yy^\top \yy = 1}\mathcal B(\xx,\yy,\xx,\yy)-\eta,$$
which leads to the conclusion.
\end{proof}

\textbf{Example 4.2}\label{3-2} Consider a fourth-order PS-tensor $\mathcal A=(a_{ijkl})=\mathcal B-4\mathcal I\in\mathbb R^{4\times 4\times 4\times 4}$, where $\mathcal B$ is a nonnegative fourth-order PS-tensor which entries given by:
$$\mathcal B(i,1,k,1)=i+k,~~\mathcal B(i,p,k,q)=\mathcal B(i,p,k,q)=i+k+p-q,$$
$$\mathcal B(i,t,k,t)=\sum_{l=1}^{4}\mathcal B(i,1,k,l)-\sum_{l=1\atop l\neq t}^{4}\mathcal B(i,t,k,l);~~i,k,p,q\in[4],~p\neq q,~t=2,3,4.$$
By (\ref{cd}) and (\ref{CD}), we have
$$
\overline{C}=\begin{bmatrix}
14&	~~18&	~~22&	~~26\\
18&	~~22&	~~26&	~~30\\
22&	~~26&	~~30&	~~34\\
26&	~~30&	~~34&	~~38
\end{bmatrix}.
$$
Thus, by Theorem \ref{Theorem-set3} and Corollary \ref{corollary-3},
we have
$$\lambda^{\ast}=\rho_{M}(\mathcal B)-4=\beta_{\max}(\overline{C})-4=106.9909-4=102.9909,$$
$$\yy^{\ast}=\bigg(\frac{1}{2},\frac{1}{2},\frac{1}{2},\frac{1}{2}\bigg)^{\top},~\xx^{\ast}=\xx^{\star}=(
     0.3825,
    0.4563,
    0.5300,
    0.6038
)^{\top},$$
where $(\lambda^{\ast}, \xx^{\ast}, \yy^{\ast})$ is the greatest M-eigenpair of tensor $\mathcal A$.

\subsection{Further discussion on the exact solution for the M-spectral radius and its corresponding M-eigenvectors}
In this part, we analyze the exact solutions of the greatest M-eigenvalue and its corresponding M-eigenvectors for another special fourth-order PS-tensors via the singular value decomposition of matrix.

As a linear transformation in $S_{m\times m}$ to $S_{n\times n}$ or $S_{n\times n}$ to $S_{m\times m}$, $\mathcal{A}$ has a natural matrix representation which we will denote by $\tilde{A} \in M_{n^{2}\times m^{2}}$, such that
\begin{equation*}
  \label{eq:A-in-matrix-form}
  \begin{gathered}
    \tilde{A}_{s\cdot} := \left(\mathrm{vec}\big( \mathcal{A}^{(:,j,:,l)} \big)\right)^{\top}, \qquad s := (j-1)n+l \in [n^{2}]. \\
    \tilde{A} \mathrm{vec}(X) = \mathrm{vec}(\mathcal{A}(X)), \qquad \mathrm{vec}(Y)^{\top} \tilde{A} \mathrm{vec}(X) = \mathcal{A}(X,Y),
  \end{gathered}
\end{equation*}
where
$$\big(\mathcal A (X)\big)_{jl}= \sum_{i,k=1}^{m} a_{ijkl} X_{ik},~~X\in S_{m\times m},$$
$${\cal A}(X,Y) =\sum_{i,k=1}^{m}\sum_{j,l=1}^{n} a_{ijkl}X_{ik}Y_{jl},~~Y\in S_{n\times n}.$$
Alternatively, we can use the following functions to compute $(j,l)$ from a given $s$
\begin{equation*}
  \label{eq:jl-from-s}
  j(s) = \left\lfloor \frac{s-1}{n} \right\rfloor +1, \qquad l(s) = s -(j(s)-1)n.
\end{equation*}

Assume that there are $R^{+}$ nonzero singular values of $\tilde{A}$, and express the singular value decomposition of $\tilde{A}$ as follows
\begin{equation*}
  \label{eq:svd-Atilde}
  \begin{gathered}
    \tilde{A} = U\Sigma V^\top, \quad U =(U_{\cdot 1},\cdots,U_{\cdot R^{+}})\in M_{n^{2}\times R^{+}}, \quad V=(V_{\cdot 1},\cdots,V_{\cdot R^{+}})\in M_{m^{2}\times R^{+}}, \\
    \qquad \Sigma = \mathrm{Diag}(\tilde{\sigma}_{1}, \dots, \tilde{\sigma}_{R^{+}}), \qquad
    U^\top U=V^\top V=I_{R^{+}}.
  \end{gathered}
\end{equation*}

By definition, we have
\begin{equation*}
  \label{eq:fA-with-SVD}
  \begin{gathered}
    \begin{split}
      f_{\mathcal{A}}(\xx,\yy) &= \mathcal{A}(\xx,\yy,\xx,\yy) = \mathcal{A}(\xx\xx^\top, \yy\yy^\top) = \mathrm{vec}(\yy\yy^\top)^\top \tilde{A} \mathrm{vec}(\xx\xx^\top) \\
      &= \mathrm{vec}(\yy\yy^\top)^\top U\Sigma V^\top \mathrm{vec}(\xx\xx^\top) = \sum_{r=1}^{R^{+}} \tilde{\sigma}_{r} \cdot \mathrm{vec}(\yy\yy^\top)^\top U_{\cdot r} \cdot V_{\cdot r}^\top\mathrm{vec}(\xx\xx^\top) \\
      &= \sum_{r=1}^{R^{+}} \tilde{\sigma}_{r} \left( \yy^\top\tilde{U}_{r}\yy \right) \left(\xx^\top\tilde{V}_{r}\xx \right),
    \end{split} \\
  \end{gathered}
\end{equation*}
Here
\begin{equation*}
  \label{eq:tildeUr-def}
  \begin{split}
    \tilde{U}_{r} &:= \mathrm{Mat}_{n\times n} (U_{\cdot r}),
    \quad \yy^\top\tilde{U}_{r}\yy = \mathrm{tr}\left( \tilde{U}_{r}\yy\yy^\top \right) = \mathrm{vec}(\yy\yy^\top)^\top U_{\cdot r}.  \\
    \tilde{V}_{r} &:= \mathrm{Mat}_{m\times m} (V_{\cdot r}),
    \quad \xx^\top\tilde{V}_{r}{\xx} = \mathrm{tr}\left( \tilde{V}_{r}\xx\xx^\top \right) = V_{\cdot r}^\top\mathrm{vec}(\xx\xx^\top).
  \end{split}
\end{equation*}
Here $\mathrm{Mat}_{n\times n}(\cdot)$ and $\mathrm{Mat}_{m\times m}(\cdot)$ are operators that turn $U_{\cdot r}$ and $V_{\cdot r}$ into square matrices, namely, they are the inverse function of the $\mathrm{vec}(\cdot)$ operator. Note that by the symmetry condition of $\mathcal{A}$, all $\tilde{U_{r}}$ and $\tilde{V_{r}}$ are real symmetric matrices, therefore they can be orthogonally decomposed in this way
\begin{equation*}
  \label{eq:UrVr-svd}
  \tilde{U}_{r} = P_{r}\mathrm{Diag}(\lambda_{r1}, \dots, \lambda_{rn}) P_{r}^\top, \qquad \tilde{V}_{r} = Q_{r} \mathrm{Diag}(\mu_{r1}, \dots, \mu_{rm}) Q_{r}^\top,
\end{equation*}
where $P_{r}$ and $Q_{r}$ are orthogonal matrices with dimensional $n$ and $m$, respectively.

Consequently, we have
\begin{equation}
  \label{eq:fA-with-double-SVD}
  \begin{split}
    f_{\mathcal{A}}(\xx,\yy) &= \sum_{r=1}^{R^{+}} \tilde{\sigma}_{r} \left( \yy^\top\tilde{U}_{r}\yy \right) \left( \xx^\top\tilde{V}_{r}\xx \right) \\
    &= \sum_{r=1}^{R^{+}} \tilde{\sigma}_{r} \left(\sum_{i=1}^{n} \lambda_{ri} (P_{r,\cdot i}^\top\yy)^{2} \right) \left(\sum_{j=1}^{m} \mu_{rj} (Q_{r,\cdot j}^\top\xx)^{2} \right) \\
    &= \sum_{r,i,j} (\tilde{\sigma}_{r}\lambda_{ri}\mu_{rj})\cdot (P_{r,\cdot i}^\top\yy)^{2} (Q_{r,\cdot j}^\top\xx)^{2},
  \end{split}
\end{equation}
where $P_{r,\cdot i}$ ($Q_{r,\cdot j}$) denotes $i$-th ($j$-th) column of orthogonal matrix $P_{r}$ ($Q_{r}$).

Although $\tilde{U}_{r}$, $r=1,\cdots, R^{+}$ are orthogonal in $M_{n\times n}$, and $P_{r,\cdot i}$ (the $i$th eigenvalue of $\tilde{U}_{r}$), for $i=1,\dots, n$ are orthogonal in $\mathbb R^{n}$, $P_{r,\cdot i}$ and $P_{r',\cdot i'}$ are in general \textbf{not independent} in $\mathbb R^{n}$ if $r\ne r'$. Therefore, in general we cannot obtain a closed-form maximum of $f_{\mathcal{A}}(\xx,\yy)$ based on Equation~\eqref{eq:fA-with-double-SVD}.

On the other hand, Equation~\eqref{eq:fA-with-double-SVD} suggests that if
\begin{enumerate}
\item $P_{r,\cdot i}$ are orthogonal  for all $r,i$ such that $\tilde{\sigma}_{r}\lambda_{ri} \ne 0$, and
\item $Q_{r,\cdot j}$ are orthogonal for all $r,j$ such that $\tilde{\sigma}_{r}\mu_{rj} \ne 0$,
\end{enumerate}
then we can maximize $f_{\mathcal{A}}({\xx},{\yy})$ among all ${\xx}$ and ${\yy}$ with unit length by selecting
\begin{equation}
  \label{eq:closed-form-max-special1}
  \begin{gathered}
    (r^{*}, i^{*}, j^{*}) := \arg\max_{r,i,j} \tilde{\sigma}_{r}\lambda_{ri}\mu_{rj}, \qquad  {\xx}^{*} = Q_{r^{*}, \cdot j^{*}}, \quad {\yy}^{*} = P_{r^{*}, \cdot i^{*}}. \\
    \rho_{M}(\mathcal{A}) := f_{\mathcal{A}}({\xx}^{*}, {\yy}^{*}) = \tilde{\sigma}_{r^{*}}\lambda_{r^{*}i^{*}}\mu_{r^{*}j^{*}}.
  \end{gathered}
\end{equation}

Below we will show how to create a nontrivial $\mathcal{A}$ with closed-form maximizer by using Kronecker product.

Let $\tilde{A} = A_{1} \otimes A_{2}$, for some $A_{1}, A_{2} \in M_{n\times m}$. Here $\otimes$ is the Kronecker product.

It is well known that the SVD of the Kronecker product can be expressed in terms of the SVD of the two individual matrices
\begin{equation*}
  \label{eq:kronecker-SVD}
  A_{1} = U_{1}\Sigma_{1}V_{1}^\top, \quad A_{2} = U_{2}\Sigma_{2}V_{2}^\top. \qquad \tilde{A} = \underbrace{\left( U_{1}\otimes U_{2} \right)}_{U} \underbrace{\left( \Sigma_{1}\otimes \Sigma_{2} \right)}_{\Sigma} \underbrace{\left( V_{1} \otimes V_{2} \right)^\top}_{V^\top}.
\end{equation*}

Therefore
\begin{equation*}
  \label{eq:Uvecyy-kronecker}
  \begin{gathered}
    U^\top \mathrm{vec} (\yy\yy^\top) = \left( U_{1}^\top\otimes U_{2}^\top \right) \mathrm{vec} (\yy\yy^\top) = \mathrm{vec}(U_{2}^\top\yy\yy^\top U_{1}).\\
    V^\top \mathrm{vec} (\xx\xx^\top) = \left( V_{1}^\top\otimes V_{2}^\top \right) \mathrm{vec} (\xx\xx^\top) = \mathrm{vec}(V_{2}^\top\xx\xx^\top V_{1}).
  \end{gathered}
\end{equation*}

\begin{equation*}
  \label{eq:fA-with-SVD-kronecker}
  \begin{split}
    f_{\mathcal{A}}(\xx,\yy) &= \mathrm{vec}(\yy\yy^\top)^\top U\Sigma V^\top \mathrm{vec}(\xx\xx^\top) = \mathrm{vec}(U_{2}^\top\yy\yy^\top U_{1})^\top \left( \Sigma_{1}\otimes \Sigma_{2} \right) \mathrm{vec}(V_{2}^\top\xx\xx^\top V_{1}) \\
    &= \mathrm{vec}(U_{2}^\top\yy\yy^\top U_{1})^\top \mathrm{vec}\left( \Sigma_{2} V_{2}^\top\xx\xx^\top V_{1} \Sigma_{1}^\top \right)\\
    &= \mathrm{vec}(U_{2}^\top\yy\yy^\top U_{1}\Sigma_{1})^\top \mathrm{vec}\left( \Sigma_{2} V_{2}^\top\xx\xx^\top V_{1} \right)
    = \uinner{T(Y)\Sigma_{1}}{\Sigma_{2}S(X)}_{F},
  \end{split}
\end{equation*}
here $T(Y) := U_{2}^\top\yy\yy^\top U_{1}$ and $S(X) := V_{2}^\top\xx\xx^\top V_{1}$, where $\uinner{\cdot~}{\cdot}_{F}$ denotes the Frobenius inner product.

By construction, $U_{1}$, $U_{2}$ are semi-orthogonal matrices, and $V_{1}$, $V_{2}$ are orthogonal matrices; $\xx$ and $\yy$ are vectors with unit length. Therefore
\begin{equation}
  \label{eq:fA-xtilde-inequality-kronecker}
  \begin{gathered}
    \|U_{1}^\top\yy\| \leq 1, \quad \|U_{2}^\top\yy\| \leq 1. \quad \|V_{1}^\top\xx\| = \|V_{2}^\top\xx\| = 1. \\
    \|T(Y)\|_{F} = \|U_{2}^\top\yy\|\cdot \|U_{1}^\top\yy\| \leq 1, \quad \|S(X)\|_{F} = \|V_{1}^\top\xx\| \cdot \|V_{2}^\top\xx\| = 1. \\
    \|T(Y)\Sigma_{1}\|_{F} \leq \tilde{\sigma}_{11}\|T(Y)\|_{F} \leq \tilde{\sigma}_{11}. \qquad
    \|\Sigma_{2} S(X)\|_{F} \leq \tilde{\sigma}_{21} \|S(X)\|_{F} = \tilde{\sigma}_{21}. \\
    f_{\mathcal{A}}(\xx,\yy) \leq \|T(Y)\Sigma_{1}\|_{F} \|\Sigma_{2} S(X)\|_{F} \leq \tilde{\sigma}_{11}\tilde{\sigma}_{21}.
  \end{gathered}
\end{equation}

As a special case, let us assume that $U_{1} = U_{2}$ and $V_{1}=V_{2}$. We can choose ${\xx}^{*} = V_{1, \cdot 1}$ and ${\yy}^{*} = U_{1, \cdot 1}$, so that $T(Y) = S(X)=E_{11}$ (a matrix in which the $(1,1)$ entry is one and all the rest entries are zero). In this case
\begin{equation}
  \label{eq:TD-DS-max}
  T(Y)\Sigma_{1} = \tilde{\sigma}_{11} E_{11}, \quad \Sigma_{2}S(X) = \tilde{\sigma}_{21} E_{11}, \quad  \uinner{T(Y)\Sigma_{1}}{\Sigma_{2}S(X)}_{F} = \tilde{\sigma}_{11}\tilde{\sigma}_{21}.
\end{equation}

By comparing Equation~\eqref{eq:TD-DS-max} with Equation~\eqref{eq:fA-xtilde-inequality-kronecker}, we conclude that $$(\xx^{*} = V_{1, \cdot 1}, \yy^{*} = U_{1, \cdot 1})$$
is a maximizer of $f_{\mathcal{A}}(\xx, \yy)$, and $\rho_{M}(\mathcal{A}) = \tilde{\sigma}_{11}\tilde{\sigma}_{21}$.

\section{Application: A criterion for nonsingular elasticity M-tensors with the strong ellipticity condition}

In this section, we derive the sufficient condition for nonsingular elasticity
M-tensors and the strong ellipticity condition of elasticity Z-tensors.

It is well know that the fourth-order PS-tensor is also  called the Elasticity Tensor \cite{li2019m,wang2018best,miao2020}.
The M-positive definiteness of an elasticity tensor is equivalent to the strong ellipticity condition in a nonlinear mechanical system governed by this elasticity tensor~\cite{zubov2011criterion}.
\begin{lemma}\emph{\cite{han2009conditions,qi2009conditions}}\label{sec}
Let $\mathcal A=(a_{ijkl})$ be an elasticity tensor, then, the strong ellipticity condition
\begin{align}\label{function}
\begin{aligned}
f_{\mathcal A}(\xx,\yy)=\mathcal A\xx\yy\xx\yy=\sum_{i,k=1}^{m}\sum_{j,l=1}^{n}a_{ijkl}x_{i}y_{j}x_{k}y_{l}>0
\end{aligned}
\end{align}
holds for all nonzero vectors $\xx\in\mathbb R^{m},~\yy\in\mathbb R^{n}$ if and only if $\mathcal A$ is M-positive definite.
\end{lemma}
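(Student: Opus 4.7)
The plan is to establish the equivalence via the variational characterization of M-eigenvalues as Lagrange multipliers of the constrained problem $\min f_{\mathcal A}(\xx,\yy)$ subject to $\xx^\top\xx=\yy^\top\yy=1$. Because $f_{\mathcal A}$ is homogeneous of degree $2$ in $\xx$ and degree $2$ in $\yy$, the strong ellipticity condition \eqref{function} on all nonzero $(\xx,\yy)$ is equivalent to $f_{\mathcal A}(\xx,\yy)>0$ on the product of unit spheres $S^{m-1}\times S^{n-1}$, which is compact; so the infimum is attained and the question reduces to identifying this infimum with the smallest M-eigenvalue.

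First I would differentiate $f_{\mathcal A}$. Using the partial symmetries $a_{ijkl}=a_{kjil}=a_{ilkj}$, a direct index calculation yields
\begin{equation*}
\nabla_{\xx} f_{\mathcal A}(\xx,\yy)=2\,\mathcal A(\cdot,\yy,\xx,\yy),\qquad \nabla_{\yy} f_{\mathcal A}(\xx,\yy)=2\,\mathcal A(\xx,\yy,\xx,\cdot).
\end{equation*}
Forming the Lagrangian $L=f_{\mathcal A}(\xx,\yy)-\alpha(\xx^\top\xx-1)-\beta(\yy^\top\yy-1)$ and applying KKT, any critical point $(\xx^\ast,\yy^\ast)$ on $S^{m-1}\times S^{n-1}$ must satisfy $\mathcal A(\cdot,\yy^\ast,\xx^\ast,\yy^\ast)=\alpha\xx^\ast$ and $\mathcal A(\xx^\ast,\yy^\ast,\xx^\ast,\cdot)=\beta\yy^\ast$. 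Contracting the first equation with $\xx^\ast$ and the second with $\yy^\ast$ gives $\alpha=\beta=f_{\mathcal A}(\xx^\ast,\yy^\ast)$, so by Definition \ref{defMeigenvalues} the pair $(\xx^\ast,\yy^\ast)$ is an M-eigenpair with M-eigenvalue equal to $f_{\mathcal A}(\xx^\ast,\yy^\ast)$. In particular, the minimizer of $f_{\mathcal A}$ on the unit spheres realizes an M-eigenvalue; conversely every M-eigenvalue equals $f_{\mathcal A}(\xx,\yy)$ at its own unit M-eigenvectors. Hence
\begin{equation*}
\min_{\xx^\top\xx=\yy^\top\yy=1} f_{\mathcal A}(\xx,\yy) \;=\; \lambda_{\min}(\mathcal A),
\end{equation*}
the smallest M-eigenvalue of $\mathcal A$.

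From this identification both implications are immediate. If $\mathcal A$ is M-positive, then $\lambda_{\min}(\mathcal A)>0$, so $f_{\mathcal A}(\xx,\yy)\geq\lambda_{\min}(\mathcal A)>0$ on the unit spheres, and rescaling via the biquadratic homogeneity $f_{\mathcal A}(\xx,\yy)=\|\xx\|^2\|\yy\|^2 f_{\mathcal A}(\xx/\|\xx\|,\yy/\|\yy\|)$ extends the inequality to all nonzero $\xx,\yy$. Conversely, if the strong ellipticity condition holds, then for any M-eigenvalue $\lambda$ with unit eigenvectors $\xx,\yy$ we have $\lambda=f_{\mathcal A}(\xx,\yy)>0$, so every M-eigenvalue is positive and $\mathcal A$ is M-positive.

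The only nontrivial step is the KKT identification, and the main technical point there is keeping track of the partial symmetries of $\mathcal A$ so that both gradient equations reproduce exactly the M-eigenvector system in \eqref{Meigenvalues} (in particular, using $a_{ijkl}=a_{ilkj}$ to rewrite $\nabla_{\yy}$ as $\mathcal A(\xx,\yy,\xx,\cdot)$ rather than $\mathcal A(\xx,\cdot,\xx,\yy)$). Everything else is either a homogeneity rescaling or a compactness/continuity argument on $S^{m-1}\times S^{n-1}$.
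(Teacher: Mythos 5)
The paper does not prove this lemma; it is quoted verbatim from the cited references (Han, Dai, Qi 2009; Qi, Dai, Han 2009), so there is no in-paper proof to compare against. Your argument is correct: the gradient computations use the partial symmetries $a_{ijkl}=a_{kjil}=a_{ilkj}$ properly, the Lagrange-multiplier step is justified because the gradients $2\xx$ and $2\yy$ of the two sphere constraints are trivially linearly independent at any feasible point, the identification $\min_{S^{m-1}\times S^{n-1}} f_{\mathcal A}=\lambda_{\min}(\mathcal A)$ follows from both inequalities you give, and the biquadratic homogeneity closes the equivalence. This is precisely the standard variational argument that the cited references use to characterize strong ellipticity via M-eigenvalues.
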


Meanwhile, in the context of quantum entanglement, the role of the ``certificate'' is played by observables known as entanglement witnesses (EW). It is known (see equation $(11)$ in \cite{doherty2002distinguishing}) that $f_{\mathcal A}(\xx,\yy)\geq0$ for every product state $\mathinner{|\xx\yy\rangle}$ if the corresponding $\mathcal A$ is an EW.
Unfortunately, it is difficult to check the above necessary and sufficient condition for strong ellipticity because finding the smallest M-eigenvalue is NP-hard \cite{ling2010}. In order to overcome this difficulty, in \cite{he2020m,li2019checkable},
some sufficient conditions based directly on the elements of the elasticity tensor were obtained for the strong ellipticity condition. More relevant results could be found in
\cite{huang2018positive,qi2018tensor,LI2021} and references therein.
In \cite{ding2020elasticity}, two sufficient conditions were given for the strong ellipticity condition through an alternating projection algorithm and the unfolding matrix of elasticity tensors, and the elasticity M-tensor defined below was thoroughly investigated.

\begin{definition}\cite{ding2020elasticity}\label{defM-tensor}
An elasticity tensor $\mathcal A$ is called an \textbf{elasticity M-tensor} (a \textbf{nonsingular elasticity M-tensor}) if there exist a nonnegative elasticity tensor $\mathcal B\in \mathbb R^{m\times n\times m\times n}$
and a real number $\mu\geq\rho_{M}(\mathcal B)$ \big($\mu> \rho_{M}(\mathcal B)$\big) such that $\mathcal A=\mu\mathcal I-\mathcal B$.
Moreover, an \textbf{elasticity Z-tensor} is an elasticity tensor with all off-diagonal elements being non-positive.
\end{definition}

\begin{center}
\begin{tikzpicture}\label{fig:relation}
\filldraw[cyan!40] (8,3.4) circle (2.9);
\node [font=\fontsize{8}{6}] (node001) at (8,0.95){Elasticity tensor};
\filldraw[green!78] (6.8,3.5) ellipse (1.5 and 2.35 );
\node [font=\fontsize{8}{6}] (node001) at (6.6,4){Nonnegative};
\node [font=\fontsize{8}{6}] (node001) at (6.6,3.5){elasticity};
\node [font=\fontsize{8}{6}] (node001) at (6.6,3){tensor(NET)};
\filldraw[magenta!60](9.225,3.5) ellipse (0.9 and 2.4 );
\node [font=\fontsize{1}{2}] (node001) at (9.225,2.1){\small Elasticity};
\node [font=\fontsize{1}{2}] (node001) at (9.225,1.8){\small Z-tensor};
\filldraw[orange!60] (9.225,4.08) ellipse (0.8 and 1.8 );
\node [font=\fontsize{1}{2}] (node001) at (9.225,3){\scriptsize Elasticity};
\node [font=\fontsize{1}{2}] (node001) at (9.225,2.8){\scriptsize M-tensor};
\filldraw[blue!40] (9.225,4.5) ellipse (0.7 and 1.4 );
\node [font=\fontsize{1}{2}] (node001) at (9.225,4.85){\tiny  Nonsingular};
\node [font=\fontsize{1}{2}] (node001) at (9.225,4.65){\tiny elasticity};
\node [font=\fontsize{1}{2}] (node001) at (9.225,4.45){\tiny M-tensor};
\filldraw[green!78] (9.225,5.61) circle(0.37);
\node [font=\fontsize{1}{2}] (node001) at (9.225,5.58){\tiny  NET};
\end{tikzpicture}\\
\textsc{Fig. 6.} \emph{Inclusion relations between different elasticity tensors.}
\end{center}

Notice that it is hard to obtain $\rho_{M}(\mathcal B)$ in the above definition because the computation of the M-spectral radius is also NP-hard \cite{ling2010,wang2009practical}. Therefore, it is also difficult to verify the strong ellipticity condition by verifying that a tensor is a nonsingular elasticity M-tensor. But, based on Definition \ref{defM-tensor} and Theorem \ref{Theorem-set1}, we derive the sufficient condition for the nonsingular elasticity
M-tensor and the strong ellipticity condition of elasticity Z-tensors. 

First, by Theorem \ref{Theorem-set1}, we obtain a sufficient condition for an elasticity Z-tensor to be a nonsingular elasticity M-tensor.
\begin{theorem}\label{suff-M}
Let $\mathcal A=(a_{ijkl})$ be an elasticity Z-tensor. If there exists $\eta\ge \max\big\{a_{ijij}:i\in[m],j\in[n]\big\}$ such that
$$\eta>R_{1}(\eta\mathcal I-\mathcal A),~~or~\eta>R_{2}(\eta\mathcal I-\mathcal A),$$
then $\mathcal A$ is a nonsingular elasticity M-tensor and the strong ellipticity condition holds.
\end{theorem}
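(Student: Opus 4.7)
The plan is to construct the companion tensor $\mathcal B := \eta \mathcal I - \mathcal A$, apply Theorem \ref{Theorem-set1} to control $\rho_M(\mathcal B)$, read off the M-tensor structure of $\mathcal A$ from Definition \ref{defM-tensor}, and finish with a short spectral-shift argument that reduces strong ellipticity to M-positive definiteness via Lemma \ref{sec}.

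First I would verify that $\mathcal B$ is a nonnegative fourth-order PS-tensor. Partial symmetry is inherited from $\mathcal A$ and $\mathcal I$. For element-wise nonnegativity, when $i=k$ and $j=l$ the diagonal entry $b_{ijij} = \eta - a_{ijij}$ is nonnegative by the hypothesis $\eta \geq \max_{i,j}a_{ijij}$; for every other index pattern $\delta_{ijkl} = 0$, so $b_{ijkl} = -a_{ijkl} \geq 0$ because $\mathcal A$ is an elasticity Z-tensor. Theorem \ref{Theorem-set1} then yields $\rho_M(\mathcal B) \leq \min\{R_1(\mathcal B), R_2(\mathcal B)\}$, and either of the assumed strict inequalities $\eta > R_1(\mathcal B)$ or $\eta > R_2(\mathcal B)$ forces $\eta > \rho_M(\mathcal B)$. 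Applying Definition \ref{defM-tensor} to the decomposition $\mathcal A = \eta\mathcal I - \mathcal B$ already shows that $\mathcal A$ is a nonsingular elasticity M-tensor.

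For the strong ellipticity condition, I would prove that every M-eigenvalue of $\mathcal A$ is strictly positive. Given an M-eigenpair $(\lambda, \xx, \yy)$ of $\mathcal A$, a direct computation from $\delta_{ijkl} = 1$ iff $i=k,j=l$, together with the normalizations $\xx^\top\xx = \yy^\top\yy = 1$, gives $\mathcal I(\cdot, \yy, \xx, \yy) = \xx$ and $\mathcal I(\xx, \yy, \xx, \cdot) = \yy$. Substituting $\mathcal A = \eta \mathcal I - \mathcal B$ into the M-eigenvalue system (\ref{Meigenvalues}) then shows that $(\eta - \lambda, \xx, \yy)$ is an M-eigenpair of $\mathcal B$, so $|\eta - \lambda| \leq \rho_M(\mathcal B) < \eta$, which forces $\lambda > 0$. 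Since every M-eigenvalue of $\mathcal A$ is strictly positive, $\mathcal A$ is M-positive definite and Lemma \ref{sec} delivers the strong ellipticity condition. The argument is essentially bookkeeping; the only mildly nontrivial step is the spectral-shift identity tying M-eigenvalues of $\mathcal A$ to those of $\mathcal B$, and that is immediate once the unit-norm constraints in Definition \ref{defMeigenvalues} are invoked, so no deeper obstacle arises.
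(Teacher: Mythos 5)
Your proof is correct, and the first half (construct $\mathcal B=\eta\mathcal I-\mathcal A$, verify nonnegativity entrywise using the Z-tensor property and $\eta\geq\max a_{ijij}$, apply Theorem \ref{Theorem-set1} to get $\eta>\rho_M(\mathcal B)$, read off Definition \ref{defM-tensor}) coincides with the paper's argument. You diverge on the strong ellipticity claim: the paper cites an external equivalence from Ding et al.\ (their Theorem 12: an elasticity Z-tensor is M-positive definite if and only if it is a nonsingular elasticity M-tensor), then invokes Lemma \ref{sec}. You instead give a direct spectral-shift proof: you check $\mathcal I(\cdot,\yy,\xx,\yy)=\xx$ and $\mathcal I(\xx,\yy,\xx,\cdot)=\yy$ on the unit spheres, deduce that every M-eigenpair $(\lambda,\xx,\yy)$ of $\mathcal A$ yields the M-eigenpair $(\eta-\lambda,\xx,\yy)$ of $\mathcal B$, and then $|\eta-\lambda|\leq\rho_M(\mathcal B)<\eta$ forces $\lambda>0$. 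Since the smallest value of $f_{\mathcal A}$ over the compact product of spheres is attained at a KKT point, hence is an M-eigenvalue, positivity of all M-eigenvalues gives M-positive definiteness and Lemma \ref{sec} finishes. Your route is self-contained and in fact re-derives, for this specific decomposition, the direction of Ding et al.'s equivalence actually needed here; the paper's route is shorter but imports a black-box theorem. Both are sound.
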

\begin{proof}
Let $\mathcal A=\eta\mathcal I-B$. Since $\mathcal A$ is an elasticity Z-tensor and $\mathcal B=\eta\mathcal I-\mathcal A$, then $\mathcal B$ is a nonnegative elasticity tensor. By Theorem \ref{Theorem-set1}, we have $R_{1}(\eta\mathcal I-\mathcal A)=R_{1}(\mathcal B)\geq\rho_{M}(\mathcal B)$. Obviously,
$$\eta>R_{1}(\eta\mathcal I-\mathcal A)\geq\rho_{M}(\mathcal B).$$
Similarly, by Theorem \ref{Theorem-set1}, we obtain
$$\eta>R_{2}(\eta\mathcal I-\mathcal A)\geq\rho_{M}(\mathcal B).$$
By Definition \ref{defM-tensor}, $\mathcal A$ is a nonsingular elasticity M-tensor.
Furthermore, an elasticity Z-tensor $\mathcal A$ is M-positive definite if and only if $\mathcal A$ is a nonsingular elasticity M-tensor \cite[Theorem 12]{ding2020elasticity}. Then, by Lemma \ref{sec}, the strong ellipticity condition holds. The proof is completed.
\end{proof}

\textbf{Example 5.1}~Consider the elasticity Z-tensor $\mathcal A=(a_{ijkl})\in\mathbb R^{2\times 2\times 2\times 2}$, whose nonzero entries are
$$a_{1111}=a_{2222}=13,~~a_{1122}=-7,~~ a_{1212}=6,~~a_{2121}=5.$$
By \cite{ding2020elasticity}, the unfolding matrix of $\mathcal A$
$$W_{x}=\left[
    \begin{array}{cccc}
    W_{x}^{(1,1)}&  W_{x}^{(1,2)}\\
   W_{x}^{(2,1)}&   W_{x}^{(2,2)}\\
    \end{array}
  \right]
=\left[
    \begin{array}{cccc}
    ~13&   ~~0&    ~~0&   -7\\
    ~~0&    ~~5&    -7&   ~~0\\
    ~~0&    -7&     ~~6&   ~~0\\
    -7&     ~~0&     ~~0&  ~13\\
    \end{array}
  \right],$$
where $W_{x}^{(j,l)}:=\mathcal A^{(:,j,:,l)}(j,l\in[2])$.
In \cite{ding2020elasticity}, Ding et al. showed that $\mathcal A$ is M-positive definite if $\beta_{\min}(W_{x})>0$. Direct computations show that $\beta_{\min}(W_{x})=\textbf{-1.5178}$, so we can not apply the positive definiteness of its unfolded matrix $W_x$ in \cite{ding2020elasticity} to judge whether the strong ellipticity condition holds for $\mathcal A$.

On the other hand, taking $\eta=\max\big\{a_{ijij}:i,j\in[2]\big\}=13$. By Theorem \ref{Theorem-set1}, $R_{1}(13\mathcal I-\mathcal A)=R_{2}(13\mathcal I-\mathcal A)=12.0623$, i.e, $13>R_{1}(13\mathcal I-\mathcal A)$ or $13>R_{2}(13\mathcal I-\mathcal A)$. So, by Theorem \ref{suff-M}, $\mathcal A$ is not only a nonsingular elasticity M-tensor, but also satisfy the strong ellipticity condition.  In fact, $\mathcal A=13\mathcal I-(13\mathcal I-\mathcal A)$, $\rho(13\mathcal I-\mathcal A)=10.7692$, i.e, $13>R_{1}(13\mathcal I-\mathcal A)=12.0623>\rho(13\mathcal I-\mathcal A)=10.7692$, and the smallest M-eigenvalue of $\mathcal A$ is about $\textbf{2.2308}$.

The above example not only demonstrates the rigor of our theoretical results, but also shows that Theorem \ref{suff-M} is useful in practice. The sufficient condition for Theorem \ref{suff-M} is relatively easy to satisfy, owing to the rigorous upper bound contributed by Theorem \ref{Theorem-set1}.

From Equation $(11)$ in \cite{doherty2002distinguishing}, in the context of entanglement, if the corresponding $\mathcal A$ is an EW, then $f_{\mathcal A}(\xx,\yy)\geq0$ for any product state $\mathinner{|\xx\yy\rangle}$.
Next, we derive two sufficient conditions for the elasticity $\mathcal A$ not to be an EW or not to be M-positive definite.
\begin{corollary}\label{M-necessi}
Let $\mathcal A=(a_{ijkl})$ be an elasticity Z-tensor. $\mathcal A$ is not an EW, if
\begin{equation*}
\sum_{i=1}^{m}\sum_{j=1}^{n}a_{ijij}<\sum_{i,k=1\atop i\neq k}^{m}\sum_{j,l=1\atop j\neq l}^{n}|a_{ijkl}|.
\end{equation*}
\end{corollary}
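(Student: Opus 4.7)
The plan is to exhibit an explicit product state $\mathinner{|\xx\yy\rangle}$ on which $f_{\mathcal A}$ is strictly negative, which by the characterization recalled just before the corollary (see equation~(11) of \cite{doherty2002distinguishing}) immediately rules out $\mathcal A$ being an EW. A natural and easy-to-analyze choice is the uniform pair
\begin{equation*}
\xx = \frac{1}{\sqrt{m}}(1,1,\ldots,1)^\top \in \mathbb R^m, \qquad \yy = \frac{1}{\sqrt{n}}(1,1,\ldots,1)^\top \in \mathbb R^n,
\end{equation*}
because every monomial $x_i y_j x_k y_l$ then collapses to the common weight $1/(mn)$ and $f_{\mathcal A}(\xx,\yy)$ reduces to $\frac{1}{mn}\sum_{i,j,k,l} a_{ijkl}$.

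Next I would split the full sum into its diagonal and off-diagonal parts relative to the index pair $((i,j),(k,l))$: the diagonal contribution is exactly $\sum_{i,j} a_{ijij}$, while the off-diagonal part, by the defining property of an elasticity Z-tensor, consists entirely of non-positive entries and can be written as $-\sum_{(i,j)\neq (k,l)} |a_{ijkl}|$. Partitioning the off-diagonal index set into the three disjoint regions $\{i\neq k, j=l\}$, $\{i=k, j\neq l\}$ and $\{i\neq k, j\neq l\}$ gives
\begin{equation*}
\sum_{(i,j)\neq (k,l)} |a_{ijkl}| \;\geq\; \sum_{i\neq k,\, j\neq l} |a_{ijkl}|,
\end{equation*}
since the first two blocks are non-negative. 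Combining these observations yields
\begin{equation*}
mn\cdot f_{\mathcal A}(\xx,\yy) \;=\; \sum_{i,j} a_{ijij} - \sum_{(i,j)\neq(k,l)} |a_{ijkl}| \;\leq\; \sum_{i,j} a_{ijij} - \sum_{i\neq k,\, j\neq l} |a_{ijkl}|,
\end{equation*}
and the hypothesis of the corollary makes the right-hand side strictly negative. Thus $f_{\mathcal A}(\xx,\yy)<0$ for a bona-fide product state, and $\mathcal A$ cannot be an EW.

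There is essentially no technical obstacle in this argument; the only thing to be careful about is to check that the RHS of the hypothesis is indeed a lower bound for the full off-diagonal absolute sum, which follows from the fact that $\{(i,j,k,l):i\neq k,\ j\neq l\}$ is a subset of $\{(i,j,k,l):(i,j)\neq(k,l)\}$. The Z-tensor sign assumption is what lets us flip off-diagonal entries to $-|a_{ijkl}|$ in the first place, so it is used exactly once and in an essential way. No heavy machinery is needed — the proof is a direct evaluation of $f_{\mathcal A}$ at the uniform product state plus a one-line inequality.
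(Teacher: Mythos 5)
Your proof is correct and follows essentially the same route as the paper's: evaluate $f_{\mathcal A}$ at the uniform product state $\xx=\tfrac{1}{\sqrt{m}}\mathbf 1$, $\yy=\tfrac{1}{\sqrt{n}}\mathbf 1$, use the Z-tensor sign condition to turn off-diagonal entries into $-|a_{ijkl}|$, and drop the two ``mixed'' off-diagonal blocks ($i=k,\,j\neq l$ and $i\neq k,\,j=l$) to get the subset inequality. The paper states the same argument as a contrapositive and elides the partition step that you spell out, but the content is identical.
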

\begin{proof}
If the corresponding $\mathcal A$ is an EW, then $f_{\mathcal A}(\xx,\yy)\geq0$ for any product state $\mathinner{|\xx\yy\rangle}$.
Taking $\xx=\big(\frac{1}{\sqrt{m}},\frac{1}{\sqrt{m}},\cdots,\frac{1}{\sqrt{m}}\big)$ and $\yy=\big(\frac{1}{\sqrt{n}},\frac{1}{\sqrt{n}},\cdots,\frac{1}{\sqrt{n}}\big)$, then
$$f_{\mathcal A}(\xx,\yy)=\frac{1}{mn}\sum_{i,k=1}^{m}\sum_{j,l=1}^{n}a_{ijkl}\geq0,$$
thus
$$\sum_{i=1}^{m}\sum_{j=1}^{n}a_{ijij}\geq-\sum_{i,k=1\atop i\neq k}^{m}\sum_{j,l=1\atop j\neq l}^{n}a_{ijkl}.$$
Notice that $\mathcal A$ is an elasticity Z-tensor, then
$$\sum_{i=1}^{m}\sum_{j=1}^{n}a_{ijij}\geq\sum_{i,k=1\atop i\neq k}^{m}\sum_{j,l=1\atop j\neq l}^{n}|a_{ijkl}|.$$
Clearly, the conclusion follows by the view of inverse negative proposition.
\end{proof}


We generalize Corollary \ref{M-necessi} to the general elasticity tensor (fourth-order PS-tensor) $\mathcal A$ based on its proof.

\begin{corollary}
Let $\mathcal A=(a_{ijkl})$ be an elasticity tensor. $\mathcal A$ is not an EW, if
\begin{equation*}
\sum_{i=1}^{m}\sum_{j=1}^{n}a_{ijij}<-\sum_{i,k=1\atop i\neq k}^{m}\sum_{j,l=1\atop j\neq l}^{n}a_{ijkl}.
\end{equation*}
\end{corollary}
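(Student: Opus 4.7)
The plan is to mirror the argument of Corollary~\ref{M-necessi} almost verbatim, since the only difference between the two statements is the absence of the Z-tensor structural hypothesis and, correspondingly, the absence of absolute values on the right-hand side. I would prove the contrapositive: assume that $\mathcal A$ \emph{is} an EW, and derive the reverse of the displayed inequality.

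By the definition recalled just before Corollary~\ref{M-necessi} (cf.\ equation~(11) of \cite{doherty2002distinguishing}), if $\mathcal A$ is an EW then $f_{\mathcal A}(\xx,\yy)\geq 0$ for every product state $\mathinner{|\xx\yy\rangle}$. I would then specialize to the uniform product state
\begin{equation*}
\xx = \tfrac{1}{\sqrt{m}}(1,1,\ldots,1)^\top, \qquad \yy = \tfrac{1}{\sqrt{n}}(1,1,\ldots,1)^\top,
\end{equation*}
which are indeed unit vectors. Substituting into the quadruple sum defining $f_{\mathcal A}$ pulls out an overall factor of $1/(mn)$, leaving the inequality
\begin{equation*}
\sum_{i,k=1}^{m}\sum_{j,l=1}^{n} a_{ijkl} \;\geq\; 0.
\end{equation*}

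The remaining step is bookkeeping: split the quadruple sum into the purely diagonal part $\sum_{i,j} a_{ijij}$ and the complementary off-diagonal part indexed by $i\neq k$ and $j\neq l$, exactly as done in the proof of Corollary~\ref{M-necessi}. Rearranging gives
\begin{equation*}
\sum_{i=1}^{m}\sum_{j=1}^{n} a_{ijij} \;\geq\; -\sum_{\substack{i,k=1\\ i\neq k}}^{m}\sum_{\substack{j,l=1\\ j\neq l}}^{n} a_{ijkl}.
\end{equation*}
This is the direct negation of the corollary's hypothesis. Reading the implication in contrapositive form then yields the claim.

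Since the Z-tensor hypothesis is no longer in force, the step in Corollary~\ref{M-necessi} that rewrites $-a_{ijkl}$ as $|a_{ijkl}|$ is simply omitted, so the signed sum survives on the right-hand side. The only potential subtlety — and thus the main ``obstacle'' — is to confirm that the same index decomposition used in the previous corollary is being applied consistently, so that the off-diagonal block in the final inequality matches the one stated. Because that decomposition is inherited unchanged from the earlier proof, no new estimate or symmetry argument is required, and the result follows as a direct generalization.
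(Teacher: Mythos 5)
Your proposal mirrors the argument the paper intends (it states this corollary without a written proof, remarking only that it generalizes Corollary~\ref{M-necessi} ``based on its proof''), but there is a genuine gap in both — one that the Z-tensor hypothesis in Corollary~\ref{M-necessi} silently patches, and that has no patch once that hypothesis is dropped.

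The issue is the ``bookkeeping'' step. The quadruple sum does \emph{not} split into just the diagonal block and the $i\neq k,\ j\neq l$ block: there are also the mixed terms with exactly one pair equal, namely
\begin{equation*}
\sum_{i=1}^{m}\sum_{\substack{j,l=1\\ j\neq l}}^{n} a_{ijil}
\qquad\text{and}\qquad
\sum_{\substack{i,k=1\\ i\neq k}}^{m}\sum_{j=1}^{n} a_{ijkj},
\end{equation*}
which belong neither to $\sum_{i,j}a_{ijij}$ nor to the displayed off-diagonal sum. Thus evaluating $f_{\mathcal A}$ at the uniform product state gives
\begin{equation*}
\sum_{i,j}a_{ijij}
+\Bigl(\text{mixed terms}\Bigr)
+\sum_{\substack{i\neq k\\ j\neq l}} a_{ijkl}\;\geq\;0,
\end{equation*}
not the inequality you assert. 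In Corollary~\ref{M-necessi} this is harmless: for a Z-tensor every off-diagonal entry, in particular every mixed entry, is nonpositive, so discarding the mixed block only weakens the right-hand side and the chain of inequalities survives. Here, with the Z-tensor hypothesis removed, the mixed block can be positive, and it is precisely the term that would allow $\sum_{i,j}a_{ijij}+\sum_{i\neq k,\,j\neq l}a_{ijkl}$ to be negative even though the full sum (and hence $f_{\mathcal A}$ at the uniform state) is nonnegative. So the single-state evaluation does not yield the claimed contrapositive; either one needs an argument that controls the mixed block (for example by testing $f_{\mathcal A}$ against several sign-pattern product states and averaging, as one can check works in the $2\times 2\times 2\times 2$ case), or the statement needs its off-diagonal sum enlarged to include the mixed terms. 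As written, the step ``split the quadruple sum into the purely diagonal part and the complementary off-diagonal part indexed by $i\neq k$ and $j\neq l$'' is simply not a valid partition of the index set, and the proof does not close.
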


Finally, we introduce a corollary and apply it to construct a nonsingular elasticity M-tensor of the rhombic system.
\begin{corollary}\label{random-1}
Let $\mathcal B=(b_{ijkl})$ be a nonnegative elasticity tensor, and $\mathcal A=R_{1}(\mathcal B)\mathcal I-\mathcal B$ or $\mathcal A=R_{2}(\mathcal B)\mathcal I-\mathcal B$. Then $\mathcal A$ is an elasticity M-tensor.
\end{corollary}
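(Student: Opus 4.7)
The plan is to verify the statement directly from \textbf{Definition \ref{defM-tensor}} using the upper-bound result in \textbf{Theorem \ref{Theorem-set1}}. Recall that an elasticity tensor $\mathcal{A}$ is called an elasticity M-tensor whenever it admits a decomposition $\mathcal{A} = \mu \mathcal{I} - \mathcal{B}$ with $\mathcal{B}$ a nonnegative elasticity tensor and $\mu \geq \rho_M(\mathcal{B})$. So the task reduces to exhibiting such a scalar $\mu$ for the two proposed candidate tensors.

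First, I would fix $\mathcal{B}$ and apply Theorem \ref{Theorem-set1}, which yields
\begin{equation*}
\rho_M(\mathcal{B}) \leq \min\bigl\{R_1(\mathcal{B}),\,R_2(\mathcal{B})\bigr\}.
\end{equation*}
In particular, $R_1(\mathcal{B}) \geq \rho_M(\mathcal{B})$ and $R_2(\mathcal{B}) \geq \rho_M(\mathcal{B})$, so each of the two scalars is a valid choice for $\mu$ in Definition \ref{defM-tensor}. Next, for the case $\mathcal{A} = R_1(\mathcal{B})\mathcal{I} - \mathcal{B}$, I would set $\mu := R_1(\mathcal{B})$ and observe that the decomposition $\mathcal{A} = \mu \mathcal{I} - \mathcal{B}$ with this $\mu$ meets both requirements of Definition \ref{defM-tensor}: $\mathcal{B}$ is nonnegative by hypothesis, and $\mu \geq \rho_M(\mathcal{B})$ by the bound just cited. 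The symmetric case $\mathcal{A} = R_2(\mathcal{B})\mathcal{I} - \mathcal{B}$ is handled identically with $\mu := R_2(\mathcal{B})$.

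There is no real obstacle here; the corollary is essentially a packaging of Theorem \ref{Theorem-set1} into the language of Definition \ref{defM-tensor}. The only detail to double-check is that $\mathcal{A}$ so constructed is indeed an elasticity tensor (i.e., retains the fourth-order partial symmetry), which is immediate because the identity tensor $\mathcal{I}$ is partially symmetric and both operations $\mu \mathcal{I}$ and subtraction preserve partial symmetry. Note also that the corollary only claims $\mathcal{A}$ is an elasticity M-tensor (not necessarily nonsingular), which is consistent with using the weak inequality $\mu \geq \rho_M(\mathcal{B})$ supplied by Theorem \ref{Theorem-set1}; one cannot in general strengthen this to the strict inequality required for nonsingularity without extra assumptions.
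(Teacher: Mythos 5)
Your argument is correct and matches the paper's proof exactly: both apply Theorem \ref{Theorem-set1} to obtain $R_1(\mathcal B)\geq\rho_M(\mathcal B)$ and $R_2(\mathcal B)\geq\rho_M(\mathcal B)$, and then invoke Definition \ref{defM-tensor} with $\mu = R_1(\mathcal B)$ or $\mu = R_2(\mathcal B)$. Your extra remarks on preservation of partial symmetry and on the weak versus strict inequality are sound but not needed beyond what the paper records.
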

\begin{proof}
By Theorem \ref{Theorem-set1}, we have $R_{1}(\mathcal B)\geq \rho_{M}(\mathcal B)$ and $R_{2}(\mathcal B)\geq \rho_{M}(\mathcal B)$. Recall the Definition \ref{defM-tensor}, the proof is completed.
\end{proof}

The strong ellipticity of the rhombic system has been discussed by many researchers \cite{chirictua2007strong,han2009conditions}. We know from \cite{han2009conditions} that the elasticity tensor $\mathcal A$ of rhombic elastic materials has the following property:
\begin{align}\label{rhombicsystem}
\begin{aligned}
a_{2331} = a_{3331} = a_{2312} =a_{3323} = a_{3112} = a_{3312} =  0,\\
a_{1112} = a_{1123} = a_{2212}= a_{1131}= a_{2231}= a_{2223}=0.
\end{aligned}
\end{align}

\textbf{Example 5.2}~Consider the nonnegative elasticity tensor $\mathcal B=(b_{ijkl})\in\mathbb R^{3\times 3\times 3\times 3}$ that satisfies the rhombic system (\ref{rhombicsystem}), without loss of generality, whose nonzero entries are
$$b_{1111}=1,~~b_{2222}=2,~~b_{3333}=3,~~b_{1122}=4,~~b_{2233}=5,$$
$$b_{3311}=6,~~b_{2323}=7,~~b_{1313}=8,~~b_{1212}=9.$$

By Theorem \ref{Theorem-set1}, we have $R_{2}(\mathcal B)=12.7757$. By Corollary \ref{random-1}, $12.7757\mathcal I-\mathcal B$ is an elasticity M-tensor. Let $\mathcal A=13\mathcal I-\mathcal B$, obviously, $\mathcal A$ also satisfies the rhombic system (\ref{rhombicsystem}). Since $b_{2121}=0$, by Theorem \ref{suff-M}, we know that this elasticity tensor $\mathcal A$ is a nonsingular elasticity M-tensor with the strong ellipticity condition.

\end{document}